\documentclass[11pt]{article}
\usepackage{amsfonts,amsmath,amsopn,amssymb,amstext,amsthm,rotating}

\usepackage{mathtools}

\usepackage{enumerate}   
\usepackage{xcolor}

\usepackage{subcaption}
\usepackage{graphicx}

\usepackage{geometry}
\usepackage{shuffle}

\usepackage{url}
\usepackage{subcaption}

\usepackage[colorlinks,linkcolor=myGreen,citecolor=myGreen,urlcolor=black]{hyperref}
\usepackage[nameinlink,noabbrev]{cleveref} 
\usepackage{thmtools}

\usepackage[colorinlistoftodos, bordercolor=black, backgroundcolor=orange!20, linecolor=orange, textsize=scriptsize]{todonotes}

\usepackage[utf8]{inputenc} 

\usepackage[backend=biber,style=alphabetic,maxnames=6]{biblatex}
\AtEveryBibitem{
   \clearfield{month}
   \clearfield{series}
   \clearfield{venue}
   \clearname{editor}
   \clearlist{location} 
   \clearfield{doi}
   \clearfield{url}
   \clearfield{eprint}
   \clearfield{venue}
   \clearfield{issn}
   \clearfield{isbn}
   \clearfield{urldate}
   \clearfield{eventdate}
   \clearfield{pages}
   \clearfield{number}
   \clearfield{volume}
   \clearfield{urldate}
   \clearfield{urlmonth}
   \clearfield{urlyear}

}

\newtheorem{theorem}{Theorem}[section]

\newtheorem{conjecture}[theorem]{Conjecture}
\newtheorem{corollary}[theorem]{Corollary}

\newtheorem{example}[theorem]{Example}

\newtheorem{lemma}[theorem]{Lemma}
\newtheorem{definition}[theorem]{Definition}

\newtheorem{proposition}[theorem]{Proposition}
\newtheorem{remark}[theorem]{Remark}

\definecolor{blueXXredX}{rgb}{0.66, 0.33, 1.0}
\definecolor{blueXredXXXX}{rgb}{1.0, 0.22, 1.0}
\definecolor{darkBlue}{rgb}{0.16, 0.32, 0.75}
\definecolor{myGreen}{rgb}{0.18039216 0.49803922 0.09411765}
\definecolor{darkred}{rgb}{0.65, 0.0, 0.0}
\definecolor{CeruleanBlue}{rgb}{0.16, 0.32, 0.75}

\newcommand\w[1]{\mathbf{#1}}

\newcommand{\R}{\mathbb{R}}
\newcommand{\N}{\mathbb{N}}
\newcommand{\C}{\mathbb{C}}
\newcommand{\id}{\mathsf{id}}

\DeclareMathOperator{\len}{len}
\DeclareMathOperator{\weight}{weight}

\title{Universal Varieties for Signatures of Membranes}

\author{Carlos Am\'endola\\
  \small TU Berlin\\
  \small \texttt{amendola@math.tu-berlin.de}
  \and 
  Gabriel Riffo\\
  \small TU Berlin \\
  \small \texttt{riffo@tu-berlin.de}
  \and 
  Leonard Schmitz\\
   \small TU Berlin\\
    \small \texttt{lschmitz@math.tu-berlin.de}
 }

\date{}

\begin{document}

\maketitle

\begin{abstract}
Membranes are sufficiently smooth  vector-valued functions defined on the unit square, widely used in applications such as  interpolation of image data. The signature of a membrane is a collection of nonlinear features that capture its essential geometry. We index these features in the graded Hopf algebra of two-parameter shuffle products and study the algebraic varieties for each graded component. At each truncation level, we obtain the dimension of this variety via a novel Witt-type formula. We propose a Chen-Chow theorem for membranes, provably at the second truncation level, and conjecturally for higher levels with extensive computational evidence in the computer algebra system \texttt{OSCAR}. 
\end{abstract}

\textbf{Keywords:} signatures, membranes, Hopf algebras, Hilbert--Poincar\'e series, tensors, algebraic varieties
\\

\textbf{MSC codes:} 
14Q15,  
60L10,  
60L70,	
68W30,  
16W30 

\section{Introduction}

Chen’s iterated-integrals signature \cite{chen1954iterated} serves as a compact description of data streams and captures many of its important analytic and geometric properties.
After its original use in topology, it entered 
control theory \cite{fliess1976outil}, led to the development of rough path theory \cite{lyons1998differential,Friz_Victoir_2010}, and contributed to a wide range of applications in machine learning tasks on time series data \cite{kidger2020neural,kiraly2019kernels,chevyrev2025primersignaturemethodmachine}. 
In \cite{ibrahim2022imagesig}, Chen's  signature is applied for images, row-by-row, in a lightweight method for image recognition. 
A first adaptation of certain iterated integrals to interpolated image data (or membranes) is proposed by \cite{zhang2022two}, with applications to texture classification tasks. 

The first principal extension of signature-type objects from paths $X:[0,1]\to \R^d$ to membranes $X:[0,1]^2\to\R^d$ is the \emph{mapping space signature} introduced by \cite{GIUSTI2025102787}, based on Chen's original use in topology. The \emph{two-parameter sums signature} \cite{DS22} is a discrete adaptation from a combinatorial perspective, based on square increments, and characterizing equivalence of image data up to dynamic warping in horizontal and vertical direction. 
These two approaches are combined in \cite[Definitions 4.1 and 4.13]{diehl2025signature}, providing a notion of signature close to the original motivation in rough path theory and applications. This notion covers the integrals used in \cite{zhang2022two} and more recently, in  \cite{xie20262dsig,diehl2026tensor} applied to a semi-supervised framework in anomaly detection and classification on image data. In this work, we follow this notion of signature for membranes, which we now define. 

A \emph{membrane} is a smooth enough map $X:[0,1]^2\to\R^d$ from the unit square to $d$-dimensional such that the integrals \eqref{align:defDEFHTsig} below are defined. In particular, the mixed partial differentials $\partial_{12}X_j(s,t):=\tfrac{\partial}{\partial s}\tfrac{\partial}{\partial t}X_j(s,t)$ 
obey the usual rules of calculus. The signature lives in some infinite-dimensional graded space and associates with each membrane  a collection of integrals organized as entries of $k$-tensors of shape $d\times \dots \times d$. In contrast to path signatures, we attach to each of such tensors a permutation in the symmetric group $\Sigma_k$, permuting the order of integration as follows.

\begin{definition}[Membrane signature]
\label{def:signature_two_parameter}%
\label{def:2-param-signature}
Let $X:[0,1]^2\to\R^d$ be a membrane. 
For any $k\geq1$ and $\tau\in\Sigma_k$ we define the $k$-tensor
  $\sigma^{(\tau)}(X)\in(\R^d)^{\otimes k}$ with entries 
  \begin{align}\label{align:defDEFHTsig}
    \sigma^{(\tau)}_{i_1\dots i_k}(X) 
    &:=
    \int\limits_{\substack{0\leq s_1\leq\dots\leq s_k\leq 1\\ 0\leq t_1\leq\dots\leq t_k\leq 1}}
    \partial_{12} X_{i_1}(s_1,t_{\tau(1)})\dots\partial_{12} X_{i_k}(s_k,t_{\tau(k)})
    \;\mathrm{d}s_1\mathrm{d}t_1\dots\mathrm{d}s_k\mathrm{d}t_k
  \end{align} 
for $1\leq i_1,\dots,i_k\leq d$. For $k=0$, we set $\sigma^{(\emptyset)}(X)=1$.

The \emph{signature} of $X$ is the following  infinite sequence of tensors, 
\begin{equation}\label{eq:defSig}
    \sigma (X)\;:=\bigoplus_{k\in \N}\;\bigoplus_{\tau\in\Sigma_k}\sigma^{(\tau)}(X). 
\end{equation}
\end{definition}

When replacing our mixed partials $\partial_{12}$ with \emph{Jacobian minors}, we obtain the mapping space signature introduced in \cite[Definition 4.5]{GIUSTI2025102787}. 
When approximating the integrals in \eqref{align:defDEFHTsig} with sums and mixed partials with the corresponding difference operators, we obtain the two-parameter sums signature introduced in \cite[Definition 2.6]{DS22}. Compare also \cite[Example 4.16]{diehl2025signature} which discusses this connection in more detail.

Fixing all identity permutations $\id_k\in\Sigma_k$ in \eqref{align:defDEFHTsig}, the resulting subsequence of tensors 
\begin{equation}\label{eq:defidSig}\bigoplus_{k\in \N}\sigma^{(\id_k)}(X)\in\bigoplus_{k\in \N}(\R^d)^{\otimes k}\end{equation}
is called the \emph{id-signature of a membrane}. The second entry $\sigma^{(\id_2)}$ of this resulting subsequence \eqref{eq:defidSig} is a  matrix of size $d\times d$ studied in \cite{LS24} using tools from the theory of matrix congruence normal forms \cite{HORN20061010}.

The id-signature does not satisfy shuffle relations in the sense of  the shuffle identity for path signatures \cite[Theorem 2.14]{chevyrev2025primersignaturemethodmachine}. The shuffle product implies that the product of two terms of the path signature can be expressed as a linear combination of higher order terms. This fact has important theoretical and practical consequences. For id-signatures of membranes this is not possible, which is relevant for practical applications, e.g.,  \cite[Section 4.1]{xie20262dsig} discusses this limitation, and \cite[Table A.12]{diehl2026tensor} provides concrete experiments where the transition from the id-signature to its full membrane signature is crucial.
For this reason, the membrane signatures in \eqref{align:defDEFHTsig} carry additional permutations.

In this work, we study the membrane signature (\Cref{def:signature_two_parameter}) from an applied algebraic geometry perspective, in particular, developing a theory on algebraic varieties inspired by \cite{bib:AFS2019} on path signatures.
For a notion of ambient space,  we determine the correct Hopf algebra (\Cref{thm:HopfAlgerbra}) in spirit of \cite{DS22} and index the signature coefficients of a membrane via a character from the resulting free two-parameter shuffle algebra (\Cref{cor:free}). 
This algebra is graded, and we construct a new class of relations (\Cref{cor:shuffle-relations}) for each graded component.

We focus on two natural families of membranes, namely, polynomial and piecewise bilinear membranes. Taking a graded component of the signature as a map restricted to those two families, it becomes polynomial and admits dictionaries (\Cref{twoparamProdMemb}) inspired by \cite{bib:PSS2019}. The resulting images are semialgebraic and lie in the universal variety of membranes, which cuts out projected two-parameter shuffle relations.  
We develop an algebraic Chen-Chow theorem (\Cref{cor:algChenChow-k2}) for the $2$nd graded component, stating that the $2$-level universal variety can be expressed as the projected signature variety of piecewise bilinear membranes. 
Furthermore, we determine the dimension of the $k$-level universal variety, by providing a novel Witt-type formula (\Cref{thm:witt-two-parameter}), and thereby counting the number of free Lyndon generators from \cite{schmitz2024freegeneratorshoffmansisomorphism} in closed form.

For actual computations, we extend the methods in \cite{RS26} from paths to membranes, making signature tensors of membranes accessible in the free and open source computer algebra system \texttt{OSCAR} \cite{OSCAR-book}. We provide several illustrative examples and, in particular, computational evidence for \Cref{con:algebraicChenChowAllTruncLevels} which lifts the algebraic Chen-Chow theorem for membranes to the $k$-level universal variety.

The rest of the paper is organized as follows. In \Cref{sec:2paramHopf} we start with the algebraic underpinning of signatures, including two-parameter shuffle products and an indexing Hopf algebra that is compatible with a weak notion of Chen's identity. \Cref{sec:symOrbSum} focuses on a fixed graded component and introduces symmetrized orbit sums for relations within graded components. In \Cref{sec:Varieties} we construct algebraic varieties from these graded components, the main objects referred to in the title of this work. \Cref{sec:ChenChow} presents an algebraic Chen-Chow theorem for the second graded component, before moving to our Witt-type formula for every level in \Cref{sec:Witt}. 

\section{Hopf algebras of two-parameter shuffles}\label{sec:2paramHopf}
In this section, we recall the two-parameter shuffle algebra  \cite[Theorem 2.3]{schmitz2024freegeneratorshoffmansisomorphism}. For a general introduction to Hopf algebras we refer to \cite{cartier2021classical,grinbergreiner2020hopf}. For an indexing space of membrane signatures \eqref{eq:defSig} it suffices to consider only a certain sub-algebra constructed as follows.  For fixed $d\in\N$ let $\{\w{1},\dots,\w{d}\}^*$ denote the free monoid generated by the $d$ symbols $\w{1},\dots,\w{d}$. Let $\len(\cdot)$ denote the length homomorphism, uniquely determined by $\len(\w{1})=\dots=\len(\w{d})=1$. 
In this work we use the one-line notation for permutations, i.e., we identify $\tau\in\Sigma_k$ with the vector of natural numbers $(\tau(1)\,\dots\,\tau(k))$. 
We define the monoid $(C,\searrow,\mathrm{e})$ over the following set of pairs
\begin{equation*}C:=\{(w,\tau)\mid w\in\{\w{1},\dots,\w{d}\}^*\text{ and }\tau\in\Sigma_{\len(w)}\}
\end{equation*}
where we denote by $\mathrm{e}$ the pair of the empty word and the permutation of the empty set. 
The binary operation $\searrow:C\times C\to C$ is given by
$$(w_1,\tau_1)\searrow(w_2,\tau_2):=(w_1w_2,\tau_1\triangleright\tau_2).$$
Here $\tau_1\triangleright\tau_2:=(\tau_1(1)\dots \tau_1(k_1)\;\tau_2(1)+k_1\,\dots\, \tau_2(k_2)+k_1)$  denotes the \emph{shifted concatenation of permutations} for $\tau_i\in\Sigma_{k_i}$.  
 
\begin{remark}\label{rem:matrixNotation}
Note that $(C,\searrow,\mathrm{e})$ can be considered as a submonoid of all matrix compositions according to \cite[Definition 2.1]{DS22}.
For this we identify the pair $(w,\tau)$ by the matrix resulting from the diagonal matrix with the letters of $w$ on its diagonal, and permuting its rows according to $\tau$.  The symbol $\searrow$ is inspired by \cite[Proposition 3.2]{foissy2025matrixsymmetricquasisymmetricfunctions}, where our pairs relate to packed integer matrices due to the restriction $d=1$. 
\end{remark}

\begin{example}
We have $(\w{12},2\,1)\searrow (\w{324},1\,3\,2)=(\w{12324},2\,1\,3\,5\,4)$. 
\end{example}

We denote by $C_\mathsf{irr}\subseteq C$ the subset of \emph{irreducible pairs}, i.e., all $(w,\tau)\in C\setminus\{\mathrm{e}\}$ 
which have only trivial decompositions, i.e.,  for $(w,\tau)=(w_1,\tau_1)\searrow (w_2,\tau_2)$ follows $(w_1,\tau_1) = \mathrm{e}$ or $(w_2,\tau_2) = \mathrm{e}$. 
\begin{example}
We have $(\w{12},2\,1)\in C_\mathsf{irr}$ but $(\w{324},1\,3\,2)=(\w{3},1)\searrow(\w{24},2\,1)\not\in C_\mathsf{irr}$.
\end{example}

We denote by $H:=\R\langle C_\mathsf{irr}\rangle$ the free associative algebra over $C_\mathsf{irr}$ where its product extends the product $\searrow$ in $C$. 
With this we equip $H$ with the coproduct $\Delta$ that dualizes $\searrow$ according to \cite[Definition 2.10]{DS22}, see also \cite[Lemma 4.1 and Proposition 3.2]{foissy2025matrixsymmetricquasisymmetricfunctions}. Note that $C$ is an $\R$-vector space basis of $H$. 
\begin{definition}
    Let $\Delta:H\to H\otimes H$
be the coalgebra homomorphism that maps all $\R$-basis elements $(w,\tau)\in C$ to 
$$\Delta(w,\tau) := \sum_{(w_1,\tau_1)\searrow(w_2,\tau_2)=(w,\tau)}
(w_1,\tau_1)\otimes(w_2,\tau_2).$$
\end{definition}

\begin{example}
We have
\begin{align*}\Delta(\w{12324},2\,1\,3\,5\,4)
= \mathrm{e} \otimes (\w{12324},2\,1\,3\,5\,4)
&+(\w{12},2\,1)\otimes(\w{324},  1\,3\,2)\\
&+(\w{123},2\,1\,3 )\otimes (\w{24},2\,1)
+(\w{12324},2\,1\,3\,5\,4)\otimes \mathrm{e}. 
\end{align*}
\end{example}

We now equip $H$ with the two-parameter shuffle product \cite[Definition 2.1]{schmitz2024freegeneratorshoffmansisomorphism}, here presented in the notation using pairs in $C$. 
Let $\mathsf{Sh}(m,s)$ with $m,s\in\N$ denote the subset of \((m,s)\)-shuffles, that is, permutations $\nu\in\Sigma_{m+s}$ 
such that $\nu(1)<\dots<\nu(m)$ and $\nu(m+1)<\dots<\nu(m+s)$.

Furthermore, let
$\pi\in\Sigma_k$ act on word $w=w_1\dots w_k\in\{\w{1},\dots,\w{d}\}^*$ of length $k$ by permuting
positions according to $\pi$, that is 
\begin{equation}\label{eq:permActsOnWords}
\pi\cdot w:=(w_{\pi^{-1}(1)}\ldots w_{\pi^{-1}(k)}). 
\end{equation}

\begin{definition}\label{def:twoParamSh}
    The two-parameter shuffle product is the bilinear map
$\shuffle:H\times H\to H$
determined 
 by
$$
(w_1,\tau_1)\shuffle(w_2,\tau_2):=\sum_{\pi\in\mathsf{Sh}(k_1,k_2)}\ 
\sum_{\lambda\in\mathsf{Sh}(k_1,k_2)}
\bigl(\pi\cdot w_1w_2,\ \lambda\circ(\tau_1\triangleright\tau_2)^{-1}\circ\pi^{-1}\bigr)$$
for nonempty $(w_i,\tau_i)\in C$ with $k_i=\len(w_i)$, and with $\mathrm{e}$ as the neutral element for $\shuffle$. 
\end{definition}

\begin{example}
We have
\begin{align*}
(\w{12},1\,2)\shuffle(\w{3},1)
={} &(\w{123},1\,2\,3)+(\w{132},1\,3\,2)+(\w{312},3\,1\,2)\\
&+(\w{123},1\,3\,2)+(\w{132},1\,2\,3)+(\w{312},2\,1\,3)\\
&+(\w{123},2\,3\,1)+(\w{132},2\,1\,3)+(\w{312},1\,2\,3).
\end{align*}
\end{example}

\begin{theorem}\label{thm:HopfAlgerbra}
  $(H,\shuffle,\Delta)$ is a commutative, graded, and connected Hopf algebra.
\end{theorem}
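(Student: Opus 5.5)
The plan is to check the bialgebra axioms for $(H,\shuffle,\Delta)$ directly, and then get the antipode for free from the grading. The grading is by length, $\deg(w,\tau)=\len(w)$. The coproduct $\Delta$ is deconcatenation along $\searrow$, so it clearly respects this grading. The product $\shuffle$ is homogeneous of degree $k_1+k_2$ by \Cref{def:twoParamSh}. The degree-$0$ part is $\R\,\mathrm{e}$, so $H$ is connected. Once $(H,\shuffle,\mathrm{e},\Delta,\varepsilon)$ is shown to be a graded connected bialgebra, with counit $\varepsilon$ the projection onto $\R\mathrm{e}$, the antipode exists by the standard Takeuchi recursion $S=\sum_{n\ge0}(\eta\varepsilon-\id)^{*n}$. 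This sum is locally finite precisely because of connectedness. So the real content is threefold: (i) $\shuffle$ is associative, commutative and unital; (ii) $\Delta$ is coassociative and counital; (iii) $\Delta$ is an algebra morphism for $\shuffle$.

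For (ii), coassociativity reduces to associativity of $\searrow$ on $C$. Indeed, both $(\Delta\otimes\id)\Delta(w,\tau)$ and $(\id\otimes\Delta)\Delta(w,\tau)$ equal the sum over all triple factorizations $(w,\tau)=c_1\searrow c_2\searrow c_3$. Associativity of $\searrow$ is immediate, since concatenation of words and shifted concatenation $\triangleright$ of permutations are both associative. For (i), the cleanest route is a combinatorial reinterpretation. Read a pair $(w,\tau)$ as a set of $k$ points in a $k\times k$ grid: letter $w_i$ sits at row $i$ and column $\tau(i)$, as in the matrix picture of \Cref{rem:matrixNotation}. The product $(w_1,\tau_1)\shuffle(w_2,\tau_2)$ then sums over all ways to interleave the two point configurations independently in the first coordinate (via $\pi$) and in the second coordinate (via $\lambda$). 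From this description commutativity is clear, because swapping the roles of the two factors corresponds to replacing $(\pi,\lambda)$ by their complementary shuffles. Associativity follows because both bracketings enumerate all triple interleavings in each coordinate separately. Alternatively, one can invoke \cite[Theorem 2.3]{schmitz2024freegeneratorshoffmansisomorphism} after checking that our formula agrees with theirs, and that $C$ is closed under their product. Closure holds because interleaving two diagonal-type matrices with one entry per row and column again gives such a matrix.

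Step (iii) is the main obstacle. We must show $\Delta(a\shuffle b)=\Delta(a)\shuffle\Delta(b)$, where $H\otimes H$ carries the componentwise product. In the grid picture, a term $c$ of $a\shuffle b$ splits as $c=c'\searrow c''$ exactly when there is a cut index $j$ with two properties: the first $j$ rows of $c$ occupy exactly the first $j$ columns, and the block structure is compatible. Each point of $c$ comes from $a$ or from $b$. So a cut of $c$ induces cuts $a=a'\searrow a''$ and $b=b'\searrow b''$, given by the points of $a$ and of $b$ lying in the lower-left block. Conversely, given such cuts together with interleavings of $(a',b')$ and of $(a'',b'')$, we can reassemble a unique interleaving of $(a,b)$ that admits the corresponding cut. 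I would prove this as an explicit bijection between index sets:
\[
\{(\pi,\lambda,j)\}\;\longleftrightarrow\;\{(a'\searrow a''=a,\;b'\searrow b''=b,\;\pi',\lambda',\pi'',\lambda'')\}.
\]
The key point is that a block-diagonal cut forces both $\pi$ and $\lambda$ to send the first-block positions of $a$ and $b$ into $\{1,\dots,j\}$. Hence $\pi$ factors as a shuffle of the first blocks followed by a shifted shuffle of the second blocks, and likewise for $\lambda$. The delicate part is the bookkeeping with the inverses in $\lambda\circ(\tau_1\triangleright\tau_2)^{-1}\circ\pi^{-1}$, namely checking that the resulting permutation restricted to the first $j$ positions equals $\lambda'\circ(\tau_1'\triangleright\tau_2')^{-1}\circ\pi'^{-1}$. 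I would handle this in the matrix language, where it becomes the statement that submatrices of an interleaving are interleavings of the corresponding submatrices. This mirrors the compatibility argument for matrix compositions in \cite[Definition 2.10]{DS22} and \cite{foissy2025matrixsymmetricquasisymmetricfunctions}. Finally, multiplicativity also needs $\varepsilon(a\shuffle b)=\varepsilon(a)\varepsilon(b)$ and $\Delta(\mathrm{e})=\mathrm{e}\otimes\mathrm{e}$, both of which are trivial.
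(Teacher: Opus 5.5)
Your argument is correct for the product discussed in the second paragraph below, but it follows a different route from the paper. The paper does no verification. It regards the pairs $(w,\tau)$ as the matrix compositions with one single-letter entry in each row and column (\Cref{rem:matrixNotation}). It notes that their span is closed under the product and coproduct of the two-parameter shuffle Hopf algebra of \cite[Theorem 2.3]{schmitz2024freegeneratorshoffmansisomorphism}, and concludes that $H$ is a sub-Hopf algebra, graded by weight. As an alternative, it transports the quasi-shuffle Hopf algebra of \cite[Theorem 2.11]{DS22} through the two-parameter Hoffman isomorphism.

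You instead check the bialgebra axioms directly in the point-configuration model and obtain the antipode from Takeuchi's formula. Your steps (ii) and (iii) are sound. Since $\pi$ and $\lambda$ are increasing on the points of each factor, a cut of an interleaving restricts to cuts $a=a'\searrow a''$ and $b=b'\searrow b''$. Conversely, such cuts together with interleavings of the pieces reassemble uniquely. The citation buys brevity and places $H$ in the framework whose freeness and Lyndon generators are used in \Cref{cor:free} and \Cref{sec:Witt}. Your route is self-contained and shows that $\shuffle$ is literally the decomposition of a product of two pairs of simplices, i.e.\ the computation behind \Cref{thm:shuffle-identity}.

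That same model settles the ``bookkeeping with the inverses'' you flag, but not in the way you anticipate. Move the point carrying $w_i$ from $s$-rank $i$ and $t$-rank $\tau(i)$ to $s$-rank $\pi(i)$ and $t$-rank $\lambda(\tau(i))$. This gives the pair $(\pi\cdot w_1w_2,\ \lambda\circ(\tau_1\triangleright\tau_2)\circ\pi^{-1})$, without the inverse printed in \Cref{def:twoParamSh}. This uninverted version is also what the integrals produce, hence what \Cref{thm:shuffle-identity} needs. The two versions agree when $\tau_1,\tau_2$ are involutions (e.g.\ in weight at most $2$), but not in general.

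With the inverse taken literally, associativity fails, for every $d$. With all shuffles trivial, the term $(\w{11111},2\,3\,1\,4\,5)$ occurs in $\bigl((\w{111},2\,3\,1)\shuffle(\w{1},1)\bigr)\shuffle(\w{1},1)$. It does not occur in $(\w{111},2\,3\,1)\shuffle\bigl((\w{1},1)\shuffle(\w{1},1)\bigr)$. Every term of the latter carries the values $\lambda(3),\lambda(1),\lambda(2)$ at the positions $\pi(1)<\pi(2)<\pi(3)$, so it contains the pattern $3\,1\,2$, which $2\,3\,1\,4\,5$ avoids. All coefficients are nonnegative, so nothing cancels.

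So do not try to match the inverse in step (iii). State explicitly that you work with the uninverted product, and then your proof goes through as written. You would hit the same issue if you took your alternative of checking agreement with \cite[Theorem 2.3]{schmitz2024freegeneratorshoffmansisomorphism}.
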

\begin{proof}
Clearly $(H,\shuffle,\Delta)$ is a sub-Hopf algebra of \cite[Theorem 2.3]{schmitz2024freegeneratorshoffmansisomorphism} by restricting matrix compositions to $C$.  
In particular, we obtain a grading via the homomorphism of monoids $\weight(\cdot)$ determined by $\weight(w,\tau):=k$ for every $(w,\tau)\in C$ with $\len(w)=k$, see also \cite[Definition 3.1]{foissy2025matrixsymmetricquasisymmetricfunctions}. 
As an alternative proof, we can use the two-parameter version of Hoffman's isomorphism \cite[Theorem 4.1]{schmitz2024freegeneratorshoffmansisomorphism} and the original Hopf algebra of two-parameter quasi-shuffle products from \cite[Theorem 2.11]{DS22}.
\end{proof}

\begin{corollary}\label{cor:free}
    $(H,\shuffle,+)$ is a commutative, graded and free algebra. 
\end{corollary}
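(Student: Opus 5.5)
The plan is to derive freeness from the structure established in \Cref{thm:HopfAlgerbra}, via the Milnor--Moore / Cartier--Leray type theorem for commutative connected graded Hopf algebras in characteristic zero. Commutativity and the grading (by $\weight$) are already part of \Cref{thm:HopfAlgerbra}, so only freeness needs an argument.

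\textbf{Step 1: finite-dimensional graded pieces.} For fixed $d$, the $k$-th graded component of $H$ is spanned by the pairs $(w,\tau)\in C$ with $\len(w)=k$. There are exactly $d^k\cdot k!$ of these, so each component is finite-dimensional. Moreover, $H_0=\R\,\mathrm{e}$, so $H$ is connected.

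\textbf{Step 2: passage to the graded dual.} Consider the graded dual $H^{\circ}=\bigoplus_k H_k^*$. It is a graded connected Hopf algebra whose coproduct dualizes $\shuffle$, so it is cocommutative. Over $\R$, the Milnor--Moore theorem identifies $H^{\circ}$ with the universal enveloping algebra $U(\mathfrak g)$ of its primitive Lie algebra $\mathfrak g$, and $\mathfrak g$ is graded with finite-dimensional pieces.

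\textbf{Step 3: PBW and dualizing back.} By Poincaré--Birkhoff--Witt, $U(\mathfrak g)\cong S(\mathfrak g)$ as graded coalgebras. Dualizing again gives $H\cong (H^{\circ})^{\circ}\cong S(\mathfrak g)^{\circ}$ as graded algebras. Since each $\mathfrak g_k$ is finite-dimensional, $S(\mathfrak g)^{\circ}$ is the symmetric algebra on $\mathfrak g^{\circ}$, using that in characteristic zero the graded dual of the symmetric coalgebra is the polynomial algebra. Hence $(H,\shuffle)$ is a free commutative (polynomial) algebra.

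The main obstacle is making the duality step precise. One has to check that the double graded dual recovers $H$, which holds because the components are finite-dimensional. One also has to check that the dual of the shuffle-type coalgebra structure on $S(\mathfrak g)$ really is the polynomial product rather than a divided-power variant. Over $\R$ this is harmless, since divided powers and ordinary powers differ only by the invertible scalars $n!$.

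An alternative route would be to transport freeness through the two-parameter Hoffman isomorphism \cite[Theorem 4.1]{schmitz2024freegeneratorshoffmansisomorphism}, using the free Lyndon-type generators constructed there, and to check that the generators lying in the sub-Hopf algebra $H$ generate it freely. This requires controlling that $H$ is generated by a subset of those generators, so I would use the general structure theorem above as the primary argument.
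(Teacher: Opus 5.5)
Your argument is correct, but it takes a longer route than the paper. The paper applies Leray's theorem directly to $H$ via \Cref{thm:HopfAlgerbra}. That theorem says a commutative, connected, graded Hopf algebra over a field of characteristic zero is free commutative, and it needs no finite-type hypothesis and no dualization. You instead pass to the cocommutative graded dual, apply Cartier--Milnor--Moore and PBW there, and dualize back. In effect this reproves Leray's theorem in the finite-type case. The price is some bookkeeping, which you either handle or which is routine:
\begin{itemize}
\item $\dim H_k=k!\,d^k$ gives the double-dual identification;
\item $\mathfrak g_0=0$ ensures $S(\mathfrak g)$ also has finite type;
\item the dual of the symmetric coalgebra is a divided-power algebra, which is polynomial over $\R$.
\end{itemize}
One point you should state explicitly: you need the version of Milnor--Moore \emph{without} Koszul signs, valid for conilpotent cocommutative Hopf algebras in characteristic zero, because $H$ is commutative in the ordinary sense. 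The sign-graded version does not apply to $H^{\circ}$, and it would produce exterior generators in odd weight. In return, your route identifies the space of free generators explicitly as $\mathfrak g^{\circ}$. Hence the number $g_\ell$ of generators in weight $\ell$ equals $\dim\mathfrak g_\ell$, the dimension of the weight-$\ell$ primitives of the graded dual. This Lie-theoretic reading mirrors the classical case, where Witt's formula counts dimensions of a free Lie algebra, and it fits naturally with the count carried out in \Cref{sec:Witt}.
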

\begin{proof}
  With \Cref{thm:HopfAlgerbra}, freeness is a classical consequence of Leray's theorem, compare for instance \cite[Theorem 4.4.2]{cartier2021classical} or \cite[Theorem 1.7.29]{grinbergreiner2020hopf}.
\end{proof}

In \Cref{sec:Witt} we illustrate the resulting Lyndon-based free generators for $(H,\shuffle,+)$, as well as a novel Witt-type expression for its dimension. 

With the Hopf algebra $(H,\shuffle,\Delta)$ we can index the membrane signature (\Cref{def:signature_two_parameter}). For this we fix some membrane $X$ and identify $\sigma(X)$ by the homomorphism $\sigma:H\to \R$  of vector spaces that maps every nonempty $\R$-basis element $(w,\tau)\in C$ to \begin{equation}\label{eq:defNotSigIndexPair}
\sigma_{(w,\tau)}:=\sigma^{(\tau)}_w(X). 
\end{equation}The connected component of $H$ is mapped to $\sigma_{\mathrm{e}}:=1$. 
We recall the two-parameter shuffle identity for signatures; see also \cite[Theorem 1]{GIUSTI2025102787} for an analogous statement for mapping space signatures, \cite[Theorem 2.13]{DS22} for two-parameter sums signatures on discrete data, or \cite[Theorem 2.21]{horozov2015noncommutative} for type-\textbf{b} integrals of membranes. 
\begin{theorem}[Shuffle identity]\label{thm:shuffle-identity}
Given a membrane signature $\sigma:H\to \R$, we have $$\sigma_{(w_1,\tau_1)}\cdot\sigma_{(w_2,\tau_2)}=\sigma_{(w_1,\tau_1)\shuffle (w_2,\tau_2)}$$
   for $\R$-basis elements $(w_i,\tau_i)\in C$ of $H$.  
\end{theorem}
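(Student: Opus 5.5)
The plan is to prove the identity directly from the integral definition \eqref{align:defDEFHTsig}, the same way one proves the shuffle identity for path signatures via Fubini and a decomposition of products of simplices. Write $k_1=\len(w_1)$, $k_2=\len(w_2)$, $k=k_1+k_2$, and abbreviate $Z_i:=\partial_{12}X_i$. The product $\sigma_{(w_1,\tau_1)}\cdot\sigma_{(w_2,\tau_2)}$ is, by Fubini, a single integral over the product of four simplices:
\begin{itemize}
\item $s$-variables $0\le s_1\le\dots\le s_{k_1}\le1$ and $0\le s'_1\le\dots\le s'_{k_2}\le 1$;
\item $t$-variables $0\le t_1\le\dots\le t_{k_1}\le1$ and $0\le t'_1\le\dots\le t'_{k_2}\le1$.
\end{itemize}
The integrand is $\prod_{j\le k_1} Z_{(w_1)_j}(s_j,t_{\tau_1(j)})\prod_{j\le k_2}Z_{(w_2)_j}(s'_j,t'_{\tau_2(j)})$. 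After concatenating the variables, this is exactly the integrand indexed by the pair $(w_1w_2,\tau_1\triangleright\tau_2)$. The key point is that the $s$-ordering and the $t$-ordering are independent of each other.

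The key steps, in order, are as follows.

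\emph{Step 1: decompose the $s$-simplices.} Up to a null set (ties between variables), the product of the two $s$-simplices is the disjoint union over $\pi\in\mathsf{Sh}(k_1,k_2)$ of the sets in which the merged $s$-variables, placed at positions $\pi(1),\dots,\pi(k)$, form a single increasing chain of length $k$.

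\emph{Step 2: decompose the $t$-simplices.} Likewise, the product of the two $t$-simplices decomposes over $\lambda\in\mathsf{Sh}(k_1,k_2)$. Together the two decompositions give a double sum over $(\pi,\lambda)$, matching the double sum in \Cref{def:twoParamSh}.

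\emph{Step 3: relabel each summand.} On each piece we rename the variables so that they read $u_1\le\dots\le u_k$ and $v_1\le\dots\le v_k$. The $j$-th factor of the concatenated word becomes $Z_{(w_1w_2)_j}(u_{\pi(j)},v_{\lambda((\tau_1\triangleright\tau_2)(j))})$. Reindexing by $m=\pi(j)$, the letter in slot $m$ is $(w_1w_2)_{\pi^{-1}(m)}=(\pi\cdot w_1w_2)_m$, by \eqref{eq:permActsOnWords}. Its $t$-index is $\lambda\circ(\tau_1\triangleright\tau_2)\circ\pi^{-1}(m)$. Hence each piece equals $\sigma$ evaluated at a single pair of the form $(\pi\cdot w_1w_2,\rho)$.

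\emph{Step 4: match the permutation convention.} The main obstacle is to make $\rho$ agree exactly with the formula $\lambda\circ(\tau_1\triangleright\tau_2)^{-1}\circ\pi^{-1}$ in \Cref{def:twoParamSh}, which involves an inverse of $\tau_1\triangleright\tau_2$. I expect this to come down to how $\lambda$ is parametrized. The $t$-shuffle can be described either by where the $t$-variables of the first block go, or through the inverse arrangement. Moreover, the set $\mathsf{Sh}(k_1,k_2)$ is not closed under inversion, so the two descriptions differ. I would first test the bookkeeping on the example $(\w{12},1\,2)\shuffle(\w{3},1)$ and on a case with nontrivial $\tau_1$, such as $(\w{12},2\,1)\shuffle(\w{3},1)$, to fix which convention reproduces the paper's formula.

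If the direct matching is inconvenient, there is a cleaner route. Since $\tau_1\triangleright\tau_2$ preserves blocks, $(\tau_1\triangleright\tau_2)^{-1}=\tau_1^{-1}\triangleright\tau_2^{-1}$. The plan is then to show that the multiset of resulting permutations is the same under either parametrization, via the bijection of the $t$-block assignments induced by $\tau_1\triangleright\tau_2$. Alternatively, one can invoke the discrete result \cite[Theorem 2.13]{DS22} via Riemann-sum approximation: only the terms with no coinciding rows or columns survive in the limit, and these are exactly the two-parameter shuffle terms. The final sum then equals $\sigma_{(w_1,\tau_1)\shuffle(w_2,\tau_2)}$. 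The case in which one of the factors is $\mathrm{e}$ is trivial, since $\sigma_{\mathrm{e}}=1$.
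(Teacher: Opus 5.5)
Your Steps 1--3 are a correct, self-contained Fubini argument. The paper itself only cites \cite[Proposition 4.29]{diehl2025signature}. What your computation actually proves is
\[
\sigma_{(w_1,\tau_1)}\cdot\sigma_{(w_2,\tau_2)}=\sum_{\pi\in\mathsf{Sh}(k_1,k_2)}\ \sum_{\lambda\in\mathsf{Sh}(k_1,k_2)}\sigma_{(\pi\cdot w_1w_2,\ \lambda\circ(\tau_1\triangleright\tau_2)\circ\pi^{-1})}.
\]
The gap is Step 4, and it is not a matter of bookkeeping. The reconciliation you plan with the printed permutation $\lambda\circ(\tau_1\triangleright\tau_2)^{-1}\circ\pi^{-1}$ fails whenever $T:=\tau_1\triangleright\tau_2$ is not an involution.

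Every $\rho\in\Sigma_k$ factors uniquely as $\rho=\nu\circ(\alpha\triangleright\beta)$ with $\nu\in\mathsf{Sh}(k_1,k_2)$, $\alpha\in\Sigma_{k_1}$, $\beta\in\Sigma_{k_2}$. Hence the sets $\mathsf{Sh}(k_1,k_2)\circ T$ and $\mathsf{Sh}(k_1,k_2)\circ T^{-1}$ are disjoint unless $T=T^{-1}$. Concretely, restricted to the two blocks, $\lambda\circ T$ has the relative order patterns of $\tau_1$ and $\tau_2$, whereas $\lambda\circ T^{-1}$ has those of $\tau_1^{-1}$ and $\tau_2^{-1}$. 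So for fixed $\pi$, no reparametrization of $\lambda$ and no bijection of $t$-block assignments can identify the two families.

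Both of your test cases have involutive $T$, so they cannot detect this; the smallest affected case has weight $4$. For $(\w{122},2\,3\,1)\shuffle(\w{1},1)$, your piece $\pi=\lambda=\id$ yields $\sigma_{(\w{1221},2\,3\,1\,4)}$, i.e.\ the term $(\w{122},2\,3\,1)\searrow(\w{1},1)$. In the printed formula the word $\w{1221}$ forces $\pi=\id$. Then $\lambda\circ(3\,1\,2\,4)=(2\,3\,1\,4)$ would require $\lambda=(3\,1\,2\,4)\notin\mathsf{Sh}(3,1)$, so this term never occurs.

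The right way to finish is to record that the identity holds for the product with $\tau_1\triangleright\tau_2$ \emph{not} inverted, rather than to force agreement: the inverse in \Cref{def:twoParamSh} is a slip. Take the irreducible $a=(\w{122},2\,3\,1)$ and $b=(\w{1},1)$. With the printed inverse, the coefficient of $a\otimes b$ in $\Delta(a\shuffle b)$ would be $0$. In $\Delta(a)\shuffle\Delta(b)$ that coefficient is $1$, so the printed product contradicts \Cref{thm:HopfAlgerbra}.

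The two versions agree whenever $\tau_1\triangleright\tau_2$ is an involution. This covers every product of weight at most $3$ (all explicit examples in the paper) and the sums over all of $\Sigma_k$ used in \Cref{sec:symOrbSum}. Your fallback via \cite{DS22} does not rescue Step 4 either. Its surviving terms are the row/column interleavings of the block-diagonal arrangement of the two pairs, which again gives the uninverted formula. That route would also need a genuine convergence argument for the Riemann sums.
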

\begin{proof}
This follows from \cite[Proposition 4.29]{diehl2025signature} and the linear pairing \eqref{eq:defNotSigIndexPair}. 
\end{proof}

The shuffle identity turns $\sigma:H\to\R$ from \eqref{eq:defNotSigIndexPair} into a character. 
The Hopf algebra structure of $H$ is compatible with the following concatenation, illustrated by \Cref{fig:concatenationMembranes}. 

\begin{definition}\label{def:diagConcat}
For membranes $X,Y:[0,1]^2\to\R^d$, their \emph{diagonal concatenation} is the membrane $X\sqcup Y:[0,1]^2\to\R^d$ defined by
\[
(X\sqcup Y)(s,t):=\begin{cases}
X(2s,2t) & (s,t)\in[0,\tfrac{1}{2}]^2,\\[2pt]
X(1,1)+Y(2s-1,2t-1)-Y(0,0) & (s,t)\in[\tfrac{1}{2},1]^2,\\[2pt]
X(2s,1)+Y(0,2t-1)-Y(0,0) & (s,t)\in[0,\tfrac{1}{2}]\times[\tfrac{1}{2},1],\\[2pt]
X(1,2t)+Y(2s-1,0)-Y(0,0) & (s,t)\in[\tfrac{1}{2},1]\times[0,\tfrac{1}{2}].
\end{cases}
\]
\end{definition}

\begin{figure}
    \centering
    \begin{subfigure}{0.3\textwidth}
        \centering        \includegraphics[width=\textwidth]{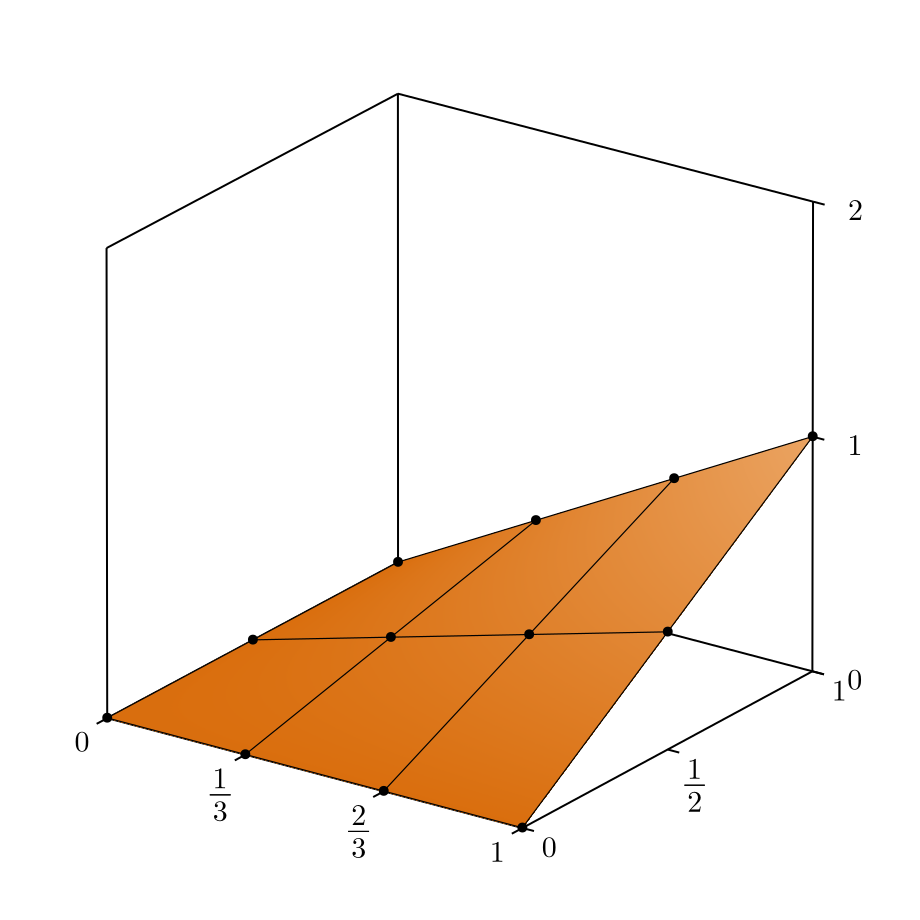}
        \caption*{$X(s,t)=st$.}
    \end{subfigure}
    \begin{subfigure}{0.3\textwidth}
        \centering
    \includegraphics[width=\textwidth]{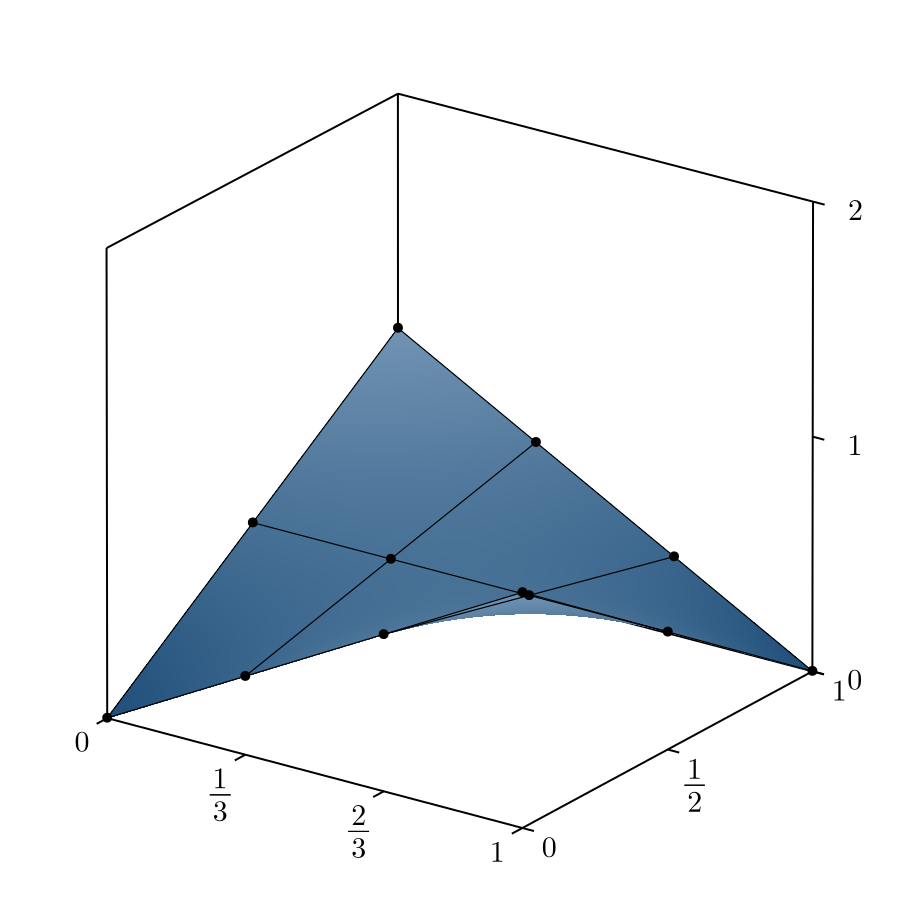}
        \caption*{$Y(s,t)=s+t-2st$.}
  
    \end{subfigure}
        \begin{subfigure}{0.3\textwidth}
        \centering
        \includegraphics[width=\textwidth]{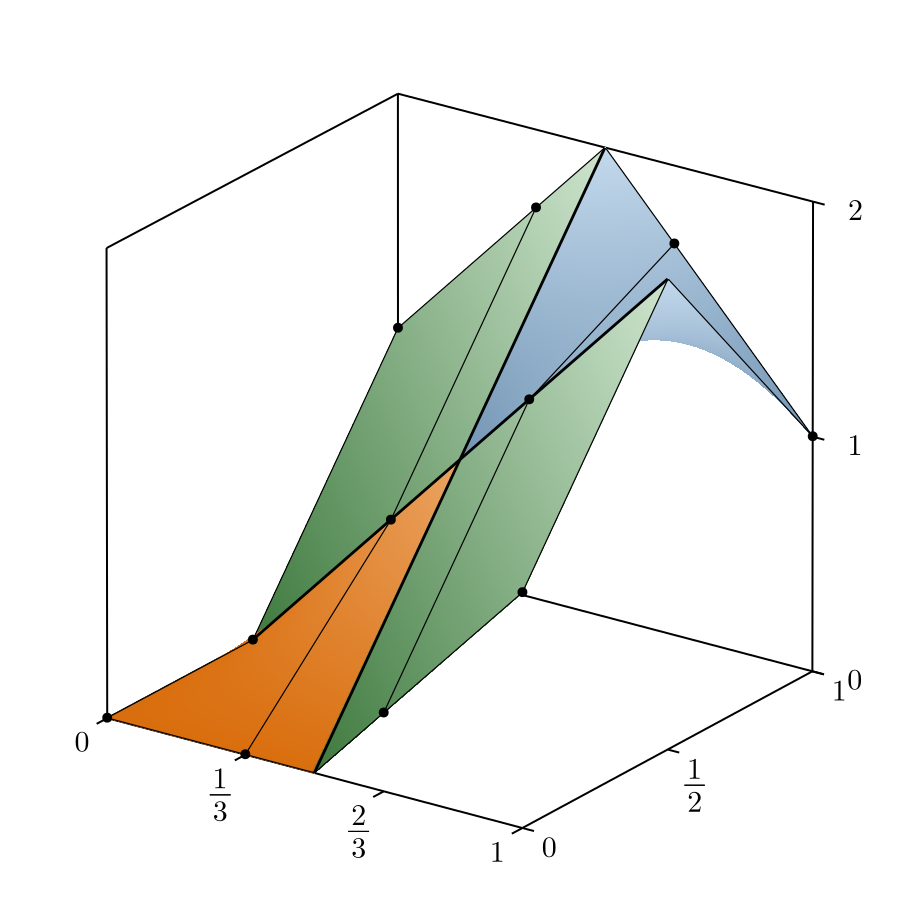}
        \caption*{$(X\sqcup Y)(s,t)$.}
    \end{subfigure}
    \caption{Diagonal concatenation of two membranes.}
    \label{fig:concatenationMembranes}
\end{figure}

Here, $X$ occupies the lower-left block $[0,\tfrac12]^2$ and $Y$ the upper-right block $[\tfrac12,1]^2$, while on each off-diagonal block $X\sqcup Y$ is a sum of functions that are constant in either $s$ or $t$. As a consequence, the mixed increment vanishes there,
\[
\partial_{12}(X\sqcup Y)=0 \quad\text{on }[0,\tfrac{1}{2}]\times[\tfrac{1}{2},1] \text{ and } [\tfrac{1}{2},1]\times[0,\tfrac{1}{2}],
\]
so the off-diagonal blocks become trivial after integration in the signature. On the two diagonal blocks,  the chain rule gives, respectively, $\partial_{12}(X\sqcup Y)=4\,\partial_{12}X(2s,2t)$ on $[0,\tfrac{1}{2}]^2$ and $4\,\partial_{12}Y(2s-1,2t-1)$ on $[\tfrac{1}{2},1]^2$. The correction term $-Y(0,0)$ ensures continuity at the corner $(\tfrac{1}{2},\tfrac{1}{2})$.
Inspired by \cite[Definition 2.29 and Corollary 2.31]{DS22} on discrete data, we can thus formulate the following Chen-type formula for membranes. 

\begin{theorem}[Weak Chen's identity]\label{thm:Chen}
For all membranes $X,Y:[0,1]^2\to \R^d$ and $(w,\tau)\in C$ we have 
$$\sigma^{(\tau)}_w(X\sqcup Y)=\sum_{(w_1,\tau_1)\searrow(w_2,\tau_2)=(w,\tau)}
\sigma^{(\tau_1)}_{w_1}(X)\cdot\sigma^{(\tau_2)}_{w_2}(Y).$$
\end{theorem}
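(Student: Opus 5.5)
Write $Z:=X\sqcup Y$ and start from the defining integral \eqref{align:defDEFHTsig} for $\sigma^{(\tau)}_w(Z)$ with $k=\len(w)$. The paragraph after \Cref{def:diagConcat} shows that $\partial_{12}Z$ vanishes on the two off-diagonal blocks. It equals $4\,\partial_{12}X(2s,2t)$ on $[0,\tfrac12]^2$ and $4\,\partial_{12}Y(2s-1,2t-1)$ on $[\tfrac12,1]^2$. So in the integrand $\prod_{j}\partial_{12}Z_{i_j}(s_j,t_{\tau(j)})$, only configurations in which every point $(s_j,t_{\tau(j)})$ lies in one of the two diagonal blocks contribute. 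Since the boundary lines $s=\tfrac12$ and $t=\tfrac12$ have measure zero, I will partition the simplex $\{s_1\le\dots\le s_k\}\times\{t_1\le\dots\le t_k\}$, up to null sets, by the pair of indices $m,m'\in\{0,\dots,k\}$ with $s_1,\dots,s_m<\tfrac12<s_{m+1},\dots,s_k$ and $t_1,\dots,t_{m'}<\tfrac12<t_{m'+1},\dots,t_k$.

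\textbf{Key combinatorial step.} For a region $(m,m')$ the integrand is nonzero only if, for every $j$, the conditions $j\le m$ and $\tau(j)\le m'$ are equivalent. This forces $m=m'$ and $\tau(\{1,\dots,m\})=\{1,\dots,m\}$. A permutation $\tau\in\Sigma_k$ stabilizing $\{1,\dots,m\}$ is exactly one that decomposes as $\tau=\tau_1\triangleright\tau_2$ with $\tau_1\in\Sigma_m$ and $\tau_2\in\Sigma_{k-m}$. Setting $w_1=w_{1}\cdots w_{m}$ and $w_2=w_{m+1}\cdots w_k$, the admissible $m$ therefore correspond bijectively to the factorizations $(w_1,\tau_1)\searrow(w_2,\tau_2)=(w,\tau)$ in $C$, including the trivial ones $m=0$ and $m=k$. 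This is the step that needs care: I have to check that the diagonal-block condition is exactly block-stability of $\tau$ and that the resulting decomposition is unique, so that each factorization is counted once.

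\textbf{Factorization of each surviving term.} For an admissible $m$, the region is a product of $\{s_1\le\dots\le s_m<\tfrac12,\ t_1\le\dots\le t_m<\tfrac12\}$ with the analogous region in $(\tfrac12,1]$ for the indices $m+1,\dots,k$. The integrand factors accordingly, because the $j$-th factor with $j\le m$ depends only on $(s_j,t_{\tau_1(j)})$, and the one with $j>m$ depends only on $(s_j,t_{\tau_2(j-m)+m})$. Fubini then splits the integral into a product of two integrals. In the first, substitute $s_j\mapsto s_j/2$ and $t_j\mapsto t_j/2$. The Jacobian $4^{-m}$ cancels the factor $4^m$ from the chain rule, and the result is $\sigma^{(\tau_1)}_{w_1}(X)$. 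In the second, the affine substitution $s\mapsto(s+1)/2$, $t\mapsto(t+1)/2$ and reindexing by $j-m$ give $\sigma^{(\tau_2)}_{w_2}(Y)$. The degenerate cases $m=0$ and $m=k$ produce $\sigma_{\mathrm{e}}=1$ times the full signature of $Y$ or of $X$, respectively.

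Summing over the admissible $m$ yields the claimed formula. I expect the bookkeeping in the combinatorial step, matching the vanishing pattern to the decomposition $\tau=\tau_1\triangleright\tau_2$, to be the only real obstacle. The analytic issues are mild: the mixed partial of $Z$ may be discontinuous along $s=\tfrac12$ or $t=\tfrac12$, but it is integrable and those lines are null sets.
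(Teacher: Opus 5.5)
Your proposal is correct and follows essentially the same route as the paper. Both arguments split the ordered domain by how many $s$- and $t$-variables lie below $\tfrac12$, use the vanishing of $\partial_{12}(X\sqcup Y)$ off the diagonal blocks to force $m=m'$ and $\tau=\tau_1\triangleright\tau_2$, and then factorize via Fubini and rescaling so that the Jacobians cancel the chain-rule factors. Your handling of the lines $s=\tfrac12$, $t=\tfrac12$ as null sets and of the bijection between admissible $m$ and factorizations is slightly more explicit than the paper's, but the argument is the same.
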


\begin{proof}
Fix $(w,\tau)\in C$ with $k=\len(w)$. By \Cref{def:diagConcat}, $\partial_{12}(X\sqcup Y)$ vanishes on the two off-diagonal blocks. Hence, in the integral defining $\sigma^{(\tau)}_w(X\sqcup Y)$, the integrand
\begin{equation*}
\prod_{j=1}^{k}\partial_{12}(X\sqcup Y)_{w_j}(s_j,t_{\tau(j)})
\end{equation*}
is nonzero only when each $(s_j,t_{\tau(j)})$ lies either in $[0,\tfrac12]^2$ or in $[\tfrac12,1]^2$.

With $s_1\leq\dots\leq s_k$, there exists $0\leq k_1\leq k$ such that $s_j\leq\tfrac12\iff j\leq k_1$. Analogously, there is $0\leq\ell\leq k$ such that  $t_r\leq\tfrac12\iff r\leq\ell$. With this we obtain $\tau(\{1,\dots,k_1\})\subseteq\{1,\dots,\ell\}$ and $\tau(\{k_1+1,\dots,k\})\subseteq\{\ell+1,\dots,k\}$, hence $\ell=k_1$. For $k_2:=k-k_1$ there are $\tau_i\in\Sigma_{k_i}$ such that $\tau=\tau_1\triangleright\tau_2$. Decomposing $w=w_1w_2$ accordingly gives us $(w,\tau)=(w_1,\tau_1)\searrow(w_2,\tau_2)$.

We fix such a decomposition. If the first $k_1$ points lie in $[0,\tfrac12]^2$ and the last $k_2$ in $[\tfrac12,1]^2$, the constraints decouple across the two blocks, so the integral factorizes. We apply the change of variables $(s_j,t_r)\mapsto(2s_j,2t_r)$ in the first block and $(s_j,t_r)\mapsto(2s_j-1,2t_r-1)$ in the second, and use that $\partial_{12}(X\sqcup Y)$ equals $4\,\partial_{12}X(2s,2t)$ in $[0,\tfrac12]^2$ and $4\,\partial_{12}Y(2s-1,2t-1)$ in $[\tfrac12,1]^2$. Here, the Jacobians $4^{-k_1}$ and $4^{-k_2}$ cancel the chain-rule factors. 
The claim follows by summing over $k_1$. 
\end{proof}

The Chen-type formulas for vertical and horizontal concatenation from \cite[Theorem 8.5]{GIUSTI2025102787} or \cite[Proposition 4.8 and Remark 4.15]{diehl2025signature} only handle id-signatures \eqref{eq:defidSig} and rely on an additional convolution product. 
The advantage of our novel Chen-type formula (\Cref{thm:Chen}) is the formulation over the entire signature \eqref{eq:defSig}, paying the price that we restrict to a subclass of $\sqcup$-decomposable membranes.
In particular, the lack of a satisfying analogue of Chen's identity for any membrane on the entire signature has consequences for computational complexity.
In the case of path signatures, one obtains a linear complexity algorithm for piecewise linear paths by iteratively applying Chen's identity. Alternatively, an exponential algorithm based on symmetric Tucker decompositions is proposed in \cite[Section 2]{bib:PSS2019}, see also \cite[Table 1]{RS26} for benchmarks of regimes where this approach outperforms the Chen-based algorithm. 

For membranes, the computational complexity landscape of signatures is not fully understood. The first result in this direction is proposed in \cite[Theorem 4.5]{DS22}, a linear-time algorithm for two-parameter sums signatures at \emph{chained matrix compositions} on discretized membranes. In particular, this result covers the discretized notion of the id-signature  in linear time and appears in \cite[Appendix B]{xie20262dsig} to efficiently approximate certain integrals. 
In  \cite[Theorem 3.18]{LS24} a fast algorithm for id-signatures of piecewise bilinear membranes is developed, i.e., bilinear interpolations of data in $\R^d$ that is arranged on an $(m+1)\times (n+1)$
grid. 
Its complexity is $\mathcal{O}(d^kk^3mn)$ and the integrals are computed exactly. 
We implement this algorithm in \cite{RS26} and thus make it available in \texttt{OSCAR}.  
Finally, \cite[Theorem 3]{diehl2026tensor} proposes an efficient algorithm to compute a substantial larger family of signature coefficients of the two-parameter sums signature via a corner tree approach. 

Our weak Chen's identity (\Cref{thm:Chen}) provides an algorithm for computing the signature of membranes $X_1\sqcup\dots \sqcup X_m$ with known signatures $\sigma(X_i)$ that has linear-time complexity in $m$. In \Cref{sec:Varieties} we develop the notion of a dictionary for membrane signatures, which leads to an exponential-time algorithm for full signatures and any piecewise bilinear or polynomial membrane, based on Tucker decompositions. In the following, we propose a quadratic-time algorithm for id-signatures of membranes whose component functions are polynomials.

\begin{proposition}\label{prop:complexityPolynomialMembranes}
The $k$-tensor $\sigma^{(
\id_k)}(X)$ from the id-signature of a membrane $X$  with $d$ polynomial components of multidegree $(m,n)$ is computable in 
$\mathcal{O}(d^k k^3 m^2 n^2)$  operations. 
\end{proposition}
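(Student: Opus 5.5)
The plan is to compute $\sigma^{(\id_k)}(X)$ by an iterated-integral recursion that works directly with tensor-valued polynomials, rather than by expanding the integrand into monomials. Expanding into monomials would give $(mn)^k$ terms, which is exponential in $k$.

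Write $X_j(s,t)=\sum_{a\le m,\,b\le n}c^{(j)}_{ab}s^at^b$. Then each mixed partial $\partial_{12}X_j$ is a polynomial of bidegree at most $(m-1,n-1)$, and it can be formed from the coefficients in $\mathcal{O}(dmn)$ operations. Because the permutation is the identity, the constraint in \eqref{align:defDEFHTsig} pairs $s_j$ with $t_j$, and the domain is a product of two simplices. This suggests the recursion
\[
F_1(s,t):=\int_0^s\!\!\int_0^t \partial_{12}X(u,v)\,\mathrm{d}u\,\mathrm{d}v,\qquad
F_{l+1}(s,t):=\int_0^s\!\!\int_0^t F_l(u,v)\otimes\partial_{12}X(u,v)\,\mathrm{d}u\,\mathrm{d}v ,
\]
with $F_l(s,t)\in(\R^d)^{\otimes l}$. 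Unfolding the recursion, the integration variables satisfy $u_1\le\dots\le u_l\le s$ and $v_1\le\dots\le v_l\le t$, with the $j$-th factor evaluated at $(u_j,v_j)$. This is exactly the id-pairing, so $\sigma^{(\id_k)}(X)=F_k(1,1)$. Note also that $F_1(s,t)=X(s,t)-X(s,0)-X(0,t)+X(0,0)$.

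Next comes the degree bookkeeping. By induction, every entry of $F_l$ is a polynomial in $(s,t)$ of bidegree at most $(lm,ln)$, so it has $\mathcal{O}(l^2mn)$ coefficients. To pass from $F_l$ to $F_{l+1}$, for each of the $d^{l+1}$ index tuples we multiply one entry of $F_l$ by one $\partial_{12}X_i$. Naive bivariate multiplication costs $\mathcal{O}(l^2mn\cdot mn)=\mathcal{O}(l^2m^2n^2)$. The subsequent termwise double integration $s^at^b\mapsto s^{a+1}t^{b+1}/((a+1)(b+1))$ costs only $\mathcal{O}(l^2mn)$. Evaluating at $(1,1)$ at the end is a sum of coefficients. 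Hence step $l$ costs $\mathcal{O}(d^{l+1}l^2m^2n^2)$. Summing over $l<k$ and using $\sum_{l<k}d^{l+1}l^2\le k\cdot d^k k^2$ gives the claimed $\mathcal{O}(d^kk^3m^2n^2)$.

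The main point to check carefully is the correctness of the recursion, namely that the nested integrals reproduce precisely the ordering constraints of \eqref{align:defDEFHTsig} for $\tau=\id_k$. This holds because the $s$- and $t$-orderings are independent chains sharing the same index, and the integrand factorizes pointwise. The cost estimate is routine once one fixes that arithmetic operations on real coefficients are counted at unit cost. A finer count (e.g. $\sum_l d^{l+1}l^2\le C d^k k^2$ for $d\ge2$) would even save a factor $k$, but the stated bound suffices.
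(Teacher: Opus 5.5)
Your proposal is correct and is essentially the paper's proof. It uses the same iterated-integral recursion (multiply by $\partial_{12}X_i$, then integrate termwise), the same bidegree growth $(lm,ln)$, and the same $\mathcal{O}(l^2m^2n^2)$ cost per step. The differences are organizational: you run the recursion tensor-valued over all prefixes at once rather than word by word, which shares prefix computations and, as you note, saves a factor $k$ for $d\ge 2$. Your formula $F_1=X(s,t)-X(s,0)-X(0,t)+X(0,0)$ also removes the paper's tacit assumption that $X$ vanishes on the coordinate axes.
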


\begin{proof}
Fix indices $(i_1,\dots, i_k)\in\{1,\dots,d\}^k$  and, for $1\le r\le k$, set 
\[
P_r(s,t)\;:=\int\limits_{\substack{0\le u_1\le\dots\le u_r\le s\\ 0\le v_1\le\dots\le v_r\le t}}
\prod_{j=1}^r\partial_{12}X_{i_j}(u_j,v_j)\,\mathrm{d}u_1\mathrm{d}v_1\dots\mathrm{d}u_r\mathrm{d}v_r,
\]
so that $\sigma^{(\id_k)}_{i_1\dots i_k}(X)=P_k(1,1)$ by \eqref{align:defDEFHTsig}. Splitting off the outermost pair of integrations yields the recursion
\begin{equation}\label{eq:P-recursion}
P_r(s,t)\;=\;\int_0^s\!\int_0^t P_{r-1}(u,v)\,\partial_{12}X_{i_r}(u,v)\,\mathrm{d}v\,\mathrm{d}u
\qquad(2\le r\le k),
\end{equation}
while $P_1(s,t)=\int_0^s\!\int_0^t\partial_{12}X_{i_1}(u,v)\,\mathrm{d}v\,\mathrm{d}u=X_{i_1}(s,t)$, since $X_{i_1}$ vanishes on both coordinate axes. Iterating $k-1$ times the polynomial product $P_{r-1}\cdot\partial_{12}X_{i_r}$ followed by termwise integral in $s$ and $t$ produces $P_k$, and evaluation at $(1,1)$ returns $\sigma_{(w,\id_k)}$.

For the complexity, an induction on \eqref{eq:P-recursion} shows that $P_r$ has multidegree $(rm,rn)$.  Multiplication by $\partial_{12}X_{i_r}$, of multidegree $(m-1,n-1)$, gives multidegree $(rm-1,rn-1)$, and the integral restores $(rm,rn)$. Iteration $r$ therefore costs $\mathcal{O}(r^2m^2n^2)$ scalar operations for the product, dominating the $\mathcal{O}(r^2mn)$ divisions of the integral. Summation over $r=2,\dots,k$ yields $\mathcal{O}(k^3m^2n^2)$ operations per word, and there are $d^k$ words of length $k$. The precomputation of $\partial_{12}X_i$ for $i=1,\dots,d$ contributes $\mathcal{O}(dmn)$ and is absorbed into the bound $\mathcal{O}(d^kk^3m^2n^2)$. For a step-by-step description of the resulting algorithm we refer to our implementation in the package \cite{RS26}.  
\end{proof}

We close this section with a specific justification on the construction of our indexing Hopf algebra $H$.   
For this, note that only one of two simplices in \Cref{def:2-param-signature} is permuted. 
We could attach two permutations, one to each integral, but this leads to redundant linear relations, specified in the following. 

\begin{lemma}\label{prop:onePermSuff}
Let $\tau,\pi\in\Sigma_k$ and $1\leq i_1,\dots,i_k\leq d$. Then, 
\begin{equation*}
\int\limits_{\substack{s_{\tau(1)}\leq\dots\leq s_{\tau(k)}\\ t_{\pi(1)}\leq\dots\leq t_{\pi(k)}}}
\partial_{12} X_{i_1}(s_1,t_1)\dots\partial_{12} X_{i_k}(s_k,t_k)
\;\mathrm{d}s_1\mathrm{d}t_1\dots\mathrm{d}s_k\mathrm{d}t_k
\;=\;
\sigma^{(\pi^{-1}\circ\tau)}_{a_1\dots a_k}
\end{equation*}
where $a_j := i_{\tau(j)}$ for $1\leq j\leq k$.
\end{lemma}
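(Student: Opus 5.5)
The plan is a single change of variables that relabels the integration variables so that the $s$-variables appear in increasing order, and then to read off which permutation is left on the $t$-side. Everything is measure-preserving and linear, so the only real content is index bookkeeping.

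\emph{Step 1: reorder the $s$-variables.} Substitute $s'_j := s_{\tau(j)}$ and $t'_j := t_{\tau(j)}$ for $1\leq j\leq k$. This simultaneous permutation of the coordinate pairs $(s_j,t_j)$ is a coordinate permutation of $[0,1]^{2k}$, so its Jacobian has absolute value $1$. The $s$-constraint becomes $s'_1\leq\dots\leq s'_k$.

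\emph{Step 2: rewrite the integrand.} The integrand is a product over $j$, so we reindex it by $j=\tau(m)$:
\[\prod_{j}\partial_{12}X_{i_j}(s_j,t_j)=\prod_{m}\partial_{12}X_{i_{\tau(m)}}(s'_m,t'_m)=\prod_m \partial_{12}X_{a_m}(s'_m,t'_m).\]
The $t$-constraint $t_{\pi(1)}\leq\dots\leq t_{\pi(k)}$ is expressed through $t_{\pi(r)}=t'_{\tau^{-1}\pi(r)}$, so it reads $t'_{\rho(1)}\leq\dots\leq t'_{\rho(k)}$ with $\rho:=\tau^{-1}\circ\pi$.

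\emph{Step 3: reorder the $t$-variables.} Introduce $u_r:=t'_{\rho(r)}$, so that $u_1\leq\dots\leq u_k$ and $t'_m=u_{\rho^{-1}(m)}$. The integrand becomes $\prod_m\partial_{12}X_{a_m}(s'_m,u_{\rho^{-1}(m)})$. Since $\rho^{-1}=\pi^{-1}\circ\tau$, comparing with \eqref{align:defDEFHTsig} gives exactly $\sigma^{(\pi^{-1}\circ\tau)}_{a_1\dots a_k}$. This second substitution again only permutes coordinates, so it is measure-preserving.

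\emph{Main obstacle.} Nothing here is analytically delicate: the integrand is bounded and the domain is compact, so Fubini applies and the permutation substitutions are harmless. The step most likely to go wrong is the inversion convention, i.e.\ whether the result is $\pi^{-1}\circ\tau$ or its inverse, together with the consistent use of the one-line notation. I will check this on a small case such as $k=2$ with $\tau=\id$ and $\pi=(2\,1)$. There the left side has $t_2\leq t_1$. The right side should be $\sigma^{((2\,1))}_{i_1i_2}$, whose integrand is $\partial_{12}X_{i_1}(s_1,t_2)\,\partial_{12}X_{i_2}(s_2,t_1)$ with $t_1\leq t_2$. Renaming $t_1\leftrightarrow t_2$ turns this into $\partial_{12}X_{i_1}(s_1,t_1)\,\partial_{12}X_{i_2}(s_2,t_2)$ with $t_2\leq t_1$, which matches the left side.
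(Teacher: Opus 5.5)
Your proof is correct and is essentially the paper's argument: composing your two substitutions gives $s'_j=s_{\tau(j)}$ and $u_r=t'_{\rho(r)}=t_{\tau(\rho(r))}=t_{\pi(r)}$, which is exactly the paper's single change of variables $u_j=s_{\tau(j)}$, $v_j=t_{\pi(j)}$, and you correctly arrive at $\sigma^{(\pi^{-1}\circ\tau)}_{a_1\dots a_k}$. One small remark: your $k=2$ check cannot detect the inversion issue you flag, because every element of $\Sigma_2$ is an involution, so $\pi^{-1}\circ\tau$ and its inverse $\tau^{-1}\circ\pi$ coincide there; a $3$-cycle in $\Sigma_3$ would be needed to distinguish them, although your Step~3 computation already settles the convention.
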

 
\begin{proof}
Consider the change of variables
$
u_j = s_{\tau(j)}$ and $v_j = t_{\pi(j)}$ with $1\leq j \leq k$, 
which is a permutation of the coordinates of $[0,1]^{2k}$ and hence measure
preserving. Under it the domain of integration becomes the standard pair of
ordered simplices
$u_1 \le \dots \le u_k$ and $v_1 \le \dots \le v_k$. 
Inverting the substitution gives $s_r = u_{\tau^{-1}(r)}$ and
$t_r = v_{\pi^{-1}(r)}$, so the $r$th factor of the integrand reads
$\partial_{12}X_{i_r}\bigl(u_{\tau^{-1}(r)},v_{\pi^{-1}(r)}\bigr)$. Reordering the
factors according to the index of the first argument, i.e.,  substituting
$r=\tau(j)$, the integrand becomes 
\begin{equation*}
\prod_{j=1}^{k}\partial_{12}X_{i_{\tau(j)}}\bigl(u_j,\,v_{\pi^{-1}\circ\tau(j)}\bigr)
\;=\;\prod_{j=1}^{k}\partial_{12}X_{a_j}\bigl(u_j,\,v_{\rho(j)}\bigr),
\qquad \rho:=\pi^{-1}\circ\tau .
\end{equation*} 
By \Cref{def:2-param-signature} the resulting integral over the standard ordered
domain is $\sigma^{(\tau)}_{a_1\dots a_k}$.
\end{proof}
 
In particular, every pair $(\tau,\pi)$ in \Cref{prop:onePermSuff} enters only through the single permutation $\pi^{-1}\circ\tau$.
Permuting both simplices therefore produces no entries beyond those already
recorded. Hence, it suffices to consider only one permutation in our Hopf algebra $H$. 

\section{Symmetrized orbit sums}\label{sec:symOrbSum}
From this section onward, we assume $d\geq 2$. 
We next show that the 
 shuffle identity yields a family of polynomial relations at each graded component of $H$. 
We use the  following construction.
For the group action \eqref{eq:permActsOnWords} and a word $w\in\{\w{1},\dots,\w{d}\}^*$ of length $k$, we denote by
$\mathrm{orb}(w):=\{\tau\cdot w\mid\tau\in\Sigma_k\}$
the \emph{orbit} of $w$.
For a partition $\alpha=(\alpha_1,\dots,\alpha_r)$ of length $r$ and an $r$-tuple $i=(i_1,\dots, i_r)$ of letters in $H$, let 
$i^{\alpha}:=i_1^{\alpha_1}\ldots
i_r^{\alpha_r}$
be the word in which each $i_c$ is repeated $\alpha_c$ times.
For any such $\alpha$ and $i$ we introduce the \emph{symmetrized orbit sum} by
\begin{equation*}
\Theta(\alpha,i)\;:=\;
\sum_{\tau\in\Sigma_k}\;\sum_{w\in\mathrm{orb}(i^{\alpha})} \sigma_{(w,\tau)}.
\end{equation*}
The trivial partition $\alpha=(k)$ yields 
$\Theta(k,i)=\sum_{\tau\in\Sigma_k}\sigma_{(i^k,\tau)}$. 

\begin{theorem}\label{cor:shuffle-relations}
Let $\alpha=(\alpha_1,\dots,\alpha_r)$ be a partition of $k$ with length $r\geq2$ and $i$ be an $r$-tuple of pairwise distinct letters in $H$. 
 Then
\begin{equation}\label{eq:master-shuffle-relation}
\Theta(\alpha,i)^{\,p_\alpha}
\;=\;\binom{k}{\alpha_1,\dots,\alpha_r}^{\,p_\alpha}\prod_{c=1}^{r} \Theta(k,i_c)^{p_\alpha\alpha_c/k}
\end{equation}
where the exponents $p_\alpha:=k/\gcd(\alpha_1,\dots,\alpha_r)$ and $p_\alpha\alpha_c/k$ are positive integers.
\end{theorem}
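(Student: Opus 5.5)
The idea is to evaluate both symmetrized orbit sums in closed form in terms of the total mixed increments
\[
Y_c:=\int_0^1\!\!\int_0^1\partial_{12}X_{i_c}(s,t)\,\mathrm{d}s\,\mathrm{d}t ,\qquad c=1,\dots,r .
\]
The identity \eqref{eq:master-shuffle-relation} then becomes a comparison of monomials in the $Y_c$. The exponent $p_\alpha$ only serves to clear the fractional exponents $\alpha_c/k$, so that the final statement is a polynomial identity with no choice of real roots.

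\emph{Step 1 (symmetrizing the $t$-simplex).} Fix a word $w$ of length $k$ and sum \eqref{align:defDEFHTsig} over all $\tau\in\Sigma_k$. For fixed $s_1\le\dots\le s_k$, the summand for $\tau$ integrates the function $(t_1,\dots,t_k)\mapsto\prod_j\partial_{12}X_{w_j}(s_j,t_{\tau(j)})$ over the ordered simplex. Relabelling the $t$-variables turns this into an integral of the fixed function $\prod_j\partial_{12}X_{w_j}(s_j,t_j)$ over the region $\{t_{\tau^{-1}(1)}\le\dots\le t_{\tau^{-1}(k)}\}$. These regions tile $[0,1]^k$ up to null sets, which is the same change of variables as in \Cref{prop:onePermSuff}. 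Hence
\[
\sum_{\tau}\sigma_{(w,\tau)}=\int_{s_1\le\dots\le s_k}\prod_{j=1}^k g_{w_j}(s_j)\,\mathrm{d}s ,\qquad g_a(s):=\int_0^1\partial_{12}X_a(s,t)\,\mathrm{d}t .
\]

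\emph{Step 2 (symmetrizing over the orbit).} Next sum over $w\in\mathrm{orb}(i^\alpha)$. Consider the function $\prod_c\prod_{\ell=1}^{\alpha_c}g_{i_c}(s_{c,\ell})$ on $[0,1]^k$ with labelled variables $s_{c,\ell}$. Each ordering of these labelled variables corresponds to exactly one word of the orbit together with an arrangement of the $\alpha_c$ equal letters among themselves, so every word is hit $\prod_c\alpha_c!$ times. Therefore
\[
\Theta(\alpha,i)=\frac{1}{\alpha_1!\cdots\alpha_r!}\prod_{c=1}^r Y_c^{\alpha_c},
\qquad\text{and in particular}\qquad \Theta(k,i_c)=\frac{Y_c^{k}}{k!}.
\]
Here distinctness of the letters $i_c$ is used: it guarantees that the orbit consists of words with exactly these multiplicities. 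The condition $r\ge2$ is not needed for the computation itself; it only makes the relation nontrivial.

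\emph{Step 3 (comparing).} Since $p_\alpha\alpha_c/k=\alpha_c/\gcd(\alpha)$ is a positive integer, the right-hand side of \eqref{eq:master-shuffle-relation} equals
\[
\binom{k}{\alpha}^{p_\alpha}\prod_c\Bigl(\frac{Y_c^k}{k!}\Bigr)^{p_\alpha\alpha_c/k}=\binom{k}{\alpha}^{p_\alpha}\,(k!)^{-p_\alpha}\prod_c Y_c^{p_\alpha\alpha_c},
\]
using $\sum_c\alpha_c=k$. This equals $\bigl(\prod_c\alpha_c!\bigr)^{-p_\alpha}\prod_c Y_c^{p_\alpha\alpha_c}=\Theta(\alpha,i)^{p_\alpha}$, as required.

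The main obstacle is the bookkeeping in Steps 1–2: checking that the regions really tile the cube, and pinning down the combinatorial multiplicity $\prod_c\alpha_c!$. Step 1 is the delicate one, because the permutation acts on $t$-indices while the letters stay attached to the $s$-indices. As a cross-check, I would verify Step 2 algebraically via \Cref{thm:shuffle-identity}. Since $\sigma_{(a,1)}=Y_a$, the $k$-fold shuffle power of the letter $(a,1)$ equals $k!\sum_\tau(a^k,\tau)$, because the factor $k!$ comes from the $\pi$-sum and the $\lambda$-sum produces all permutations. More generally, $\prod_c(i_c,1)^{\shuffle\alpha_c}=\prod_c\alpha_c!\cdot\sum_{w,\tau}(w,\tau)$ over the orbit. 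This purely Hopf-algebraic route is independent of the analytic form of $\sigma$.
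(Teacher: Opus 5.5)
Your proof is correct, but it takes a different route from the paper. The paper works entirely inside $H$. It proves $(\mathbf{j},1)^{\shuffle m}=m!\sum_{\tau}(\mathbf{j}^m,\tau)$ and $\prod_c (i_c,1)^{\shuffle\alpha_c}=\alpha_1!\cdots\alpha_r!\sum_{w\in\mathrm{orb}(i^\alpha)}\sum_\tau(w,\tau)$ (\Cref{lem:diagonal-identity,lem:orbit-identity}) by shuffle/coset bijections. Only at the end does it apply the shuffle identity (\Cref{thm:shuffle-identity}) to obtain $\sigma_{i_c}^k=k!\,\Theta(k,i_c)$ and $\prod_c\sigma_{i_c}^{\alpha_c}=\alpha_1!\cdots\alpha_r!\,\Theta(\alpha,i)$. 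This is exactly your closing ``cross-check''.

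Your main argument instead evaluates the integrals directly. Summing over $\tau$ tiles the $t$-cube and collapses the membrane to the path $s\mapsto\int_0^s\!\int_0^1\partial_{12}X\,\mathrm{d}t\,\mathrm{d}u$. The classical symmetrization over orderings then gives the closed form $\Theta(\alpha,i)=\prod_c Y_c^{\alpha_c}/\prod_c\alpha_c!$ with $Y_c=\sigma_{(i_c,1)}$. This is elementary, needs no cited shuffle identity, and shows that both sides of \eqref{eq:master-shuffle-relation} are the same monomial in first-level data.

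What your route does not give by itself is the algebraic version. It proves the relation on actual membrane signatures. The paper's argument proves it for every character of $(H,\shuffle)$, i.e.\ modulo $G_{d,k}$. The paper relies on this later when it places $r_{i,j}$ in $U_{d,k}$ and asserts $\mathcal{U}_{2,3}\subseteq\mathcal{V}(I_{\mathrm{orb}})$. Deducing those from your computation would require Zariski density of membrane signatures in $\mathcal{U}_{d,k}$, which is only conjectural (\Cref{conj:stabilizingGridUniversalVarity}).

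Your Step 1 has an exact algebraic counterpart that closes this gap. The bijection $(\lambda,\tau_1,\tau_2)\mapsto\lambda\circ(\tau_1\triangleright\tau_2)^{-1}$ from $\mathsf{Sh}(k_1,k_2)\times\Sigma_{k_1}\times\Sigma_{k_2}$ onto $\Sigma_{k_1+k_2}$ shows that $w\mapsto\sum_{\tau}(w,\tau)$ is an algebra morphism from the classical shuffle algebra into $(H,\shuffle)$. Pushing the classical identity $\prod_c i_c^{\shuffle\alpha_c}=\alpha_1!\cdots\alpha_r!\sum_{w\in\mathrm{orb}(i^\alpha)}w$ through this morphism is essentially \Cref{lem:orbit-identity}.
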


We illustrate \Cref{cor:shuffle-relations} for small $k$ before giving its
proof, making the exponents $p_\alpha$ and the multinomial factors explicit.

\begin{example}\label{ex:shuffle-k2}
For $k=2$ the only partition with $r\geq2$ is $\alpha=(1,1)$, so the theorem yields
\begin{align*}
\left(\sigma_{(\w{ij},1\,2)}+\sigma_{(\w{ij},2\,1)}+\sigma_{(\w{ji},1\,2)}+\sigma_{(\w{ji},2\,1)}\right)^2
&=\Theta((1,1),\w{ij})^2\\
&=4\,\Theta(2,\w{i})\Theta(2,\w{j})\\
&=4\,\left(\sigma_{(\w{ii},1\,2)}+\sigma_{(\w{ii},2\,1)}\right)\left(\sigma_{(\w{jj},1\,2)}+\sigma_{(\w{jj},2\,1)}\right).
\end{align*}
\end{example}

\begin{example}\label{ex:shuffle-k3}
For $k=3$ the partitions with $r\geq2$ are $(2,1)$ and $(1,1,1)$, hence $p_\alpha=3$. Explicitly, for pairwise distinct $\mathbf{i},\mathbf{j}$ and $\mathbf{l}$, we have
\begin{equation*}
\Theta(3,\w{i})=\sum_{\tau\in\Sigma_3}\sigma_{(\w{iii},\tau)},\qquad
\Theta((2,1),\w{il})=\sum_{\tau\in\Sigma_3} \sum_{w\in\{\w{iil},\w{ili},\w{lii}\}}\sigma_{(w,\tau)},
\end{equation*}
\begin{equation*}
\Theta((1,1,1),\w{ijl})=\sum_{\tau\in\Sigma_3} \sum_{w\in \mathrm{orb}(\w{ijl})}\sigma_{(w,\tau)}.
\end{equation*}
\Cref{cor:shuffle-relations} gives us for the two partitions, respectively, 
$\binom{3}{2,1}=3$ and $\binom{3}{1,1,1}=6$, thus
\begin{equation*}
\Theta((2,1),\w{il})^{3}=3^3\,\Theta(3,\w{i})^{2}\,\Theta(3,\w{l})
\quad 
\text{ and }
\quad
\Theta((1,1,1),\w{ijl})^{3}=6^3\,\Theta(3,\w{i})\Theta(3,\w{j})\Theta(3,\w{l}).
\end{equation*}
\end{example}

\begin{example}\label{rem:shuffle-k4k5}
At $k=4$ we have four partitions
$(3,1),(2,2),(2,1,1),(1,1,1,1)$, and \Cref{cor:shuffle-relations} yields the relations 
\begin{align*}
\Theta((3,1),\w{il})^{4}&=4^4\,\Theta(4,\w{i})^{3}\Theta(4,\w{l}), \\
\Theta((2,2),\w{ij})^{2}&=6^2\,\Theta(4,\w{i})\Theta(4,\w{j}),\\
\Theta((2,1,1),\w{ijl})^{4}&=12^4\,\Theta(4,\w{i})^{2}\Theta(4,\w{j})\Theta(4,\w{l}), \\
\Theta((1,1,1,1),\w{ijlm})^{4}&=24^4\,\Theta(4,\w{i})\Theta(4,\w{j})\Theta(4,\w{l})\Theta(4,\w{m}).
\end{align*} 
Here $\gcd(2,2)=2$ makes $p_{(2,2)}=2$, so for $(2,2)$ the exponent drops to a square. At $k=5$,
prime, every partition with $r\geq2$ has $p_\alpha=5$. Therefore, the exponents $p_\alpha$ in the theorem do not drop as for the cases $k=2$ and $k=3$.
\end{example}

The proof of \Cref{cor:shuffle-relations} is carried out entirely inside the
Hopf algebra $H$. Apart from \Cref{thm:shuffle-identity}, which transports
identities in $H$ to identities among signature coordinates, no property of
membranes enters; the relations \eqref{eq:master-shuffle-relation} are thus
consequences of the two-parameter shuffle product of \Cref{def:twoParamSh}
alone. The proof rests on two identities in $H$, both obtained by induction: the first computes a single shuffle power
$(\mathbf{j},1)^{\shuffle m}$, the second extends it to a product following the
structure of the partition $\alpha$.

\begin{lemma}\label{lem:diagonal-identity}
    For every exponent $m\in \N$ we have
    \begin{equation*}
        (\mathbf{j},1)^{\shuffle m}=m!\sum_{\tau\in\Sigma_m}(\mathbf{j}^m,\tau).
    \end{equation*}
\end{lemma}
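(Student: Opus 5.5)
Induction on $m$ is the natural route. For $m=0$ both sides equal $\mathrm{e}$, with $0!=1$ and $\Sigma_0$ containing only the empty permutation. For $m=1$ both sides equal $(\mathbf{j},1)$. For the induction step we assume the formula for $m$ and compute
\[
(\mathbf{j},1)^{\shuffle (m+1)}=m!\sum_{\tau\in\Sigma_m}(\mathbf{j}^m,\tau)\shuffle(\mathbf{j},1).
\]
We then show that $\sum_{\tau\in\Sigma_m}(\mathbf{j}^m,\tau)\shuffle(\mathbf{j},1)=(m+1)\sum_{\rho\in\Sigma_{m+1}}(\mathbf{j}^{m+1},\rho)$. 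Multiplying by $m!$ then gives exactly the claim for $m+1$.

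To evaluate the product we apply \Cref{def:twoParamSh} with $k_1=m$ and $k_2=1$. Since the word is constant, $\pi\cdot\mathbf{j}^{m+1}=\mathbf{j}^{m+1}$ for every $\pi$, so each term is $(\mathbf{j}^{m+1},\lambda\circ(\tau\triangleright 1)^{-1}\circ\pi^{-1})$ with $\pi,\lambda\in\mathsf{Sh}(m,1)$. Here $\tau\triangleright 1$ fixes $m+1$ and acts as $\tau$ on $\{1,\dots,m\}$. As $\tau$ ranges over $\Sigma_m$, $\tau\triangleright1$ ranges over the stabilizer $S$ of $m+1$ in $\Sigma_{m+1}$, and so does its inverse. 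The sum becomes
\[
\sum_{\pi,\lambda\in\mathsf{Sh}(m,1)}\ \sum_{\sigma\in S}\bigl(\mathbf{j}^{m+1},\lambda\circ\sigma\circ\pi^{-1}\bigr).
\]

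The key counting step is to determine, for fixed $\rho\in\Sigma_{m+1}$, how many triples $(\lambda,\sigma,\pi)$ satisfy $\lambda\sigma\pi^{-1}=\rho$. The $(m,1)$-shuffles are the $m+1$ permutations $\nu_c$ ($1\le c\le m+1$), which send $m+1\mapsto c$ and map $\{1,\dots,m\}$ increasingly onto the complement of $c$. They form a system of left coset representatives of $S$, since $\nu_c S$ consists of all permutations sending $m+1$ to $c$. Inverses of shuffles then give right coset representatives. For fixed $\pi$, the products $\lambda\sigma$ run over all of $\Sigma_{m+1}$ exactly once, so $\lambda\sigma\pi^{-1}$ also runs bijectively over $\Sigma_{m+1}$. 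Summing over the $m+1$ choices of $\pi$ shows that each $\rho$ appears exactly $m+1$ times, which completes the induction.

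The main obstacle is this bookkeeping: confirming that $\mathsf{Sh}(m,1)$ is precisely a transversal of $\Sigma_{m+1}/S$, and that the inverse in $(\tau_1\triangleright\tau_2)^{-1}$ does no harm, because $S$ is closed under inversion. Nothing else in the argument requires care.
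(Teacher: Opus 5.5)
Your proof is correct and follows the paper's argument. Both use induction on $m$, the fact that the constant word $\mathbf{j}^{m+1}$ is fixed by every $\pi$, the coset bijection $\mathsf{Sh}(m,1)\times\Sigma_m\to\Sigma_{m+1}$, $(\lambda,\tau)\mapsto\lambda\circ(\tau\triangleright 1)^{-1}$, and right translation by $\pi^{-1}$ for each of the $m+1$ shuffles $\pi$. You also state the intermediate identity precisely, summing over $\tau\in\Sigma_m$ before extracting the factor $m+1$; the paper's corresponding display is written for a single fixed $\tau$, where it does not hold as stated.
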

\begin{proof}
We use induction on $m$. For $m=1$, both sides equal $(\mathbf{j},1)$, so the
claim holds with $m=1$.
For $m=2$, applying \Cref{def:twoParamSh} with $k_1=k_2=1$ gives
\begin{equation*}
(\mathbf{j},1)\shuffle(\mathbf{j},1)
=\sum_{\pi\in\mathsf{Sh}(1,1)}\sum_{\lambda\in\mathsf{Sh}(1,1)}
\bigl(\pi\cdot\mathbf{jj},\ \lambda\circ\pi^{-1}\bigr)
=2\sum_{\tau\in\Sigma_2}(\mathbf{jj},\tau),
\end{equation*}
since $\pi\cdot\mathbf{jj}=\mathbf{jj}$ for both $\pi\in\mathsf{Sh}(1,1)$, and
$\lambda\mapsto\lambda\circ\pi^{-1}$ is a bijection of $\Sigma_2$ for each fixed $\pi$.
For the inductive step, assume the claim
    \begin{equation*}
(\w{j},1)^{\shuffle m} = m!\sum_{\tau\in \Sigma_m}(\w{j}^m,\tau)
    \end{equation*}
is true for $m\geq 2$. Then, 
\begin{equation*}
    (\w{j}^m,\tau)\shuffle (\w{j},1) = \sum_{\pi \in \mathsf{Sh}(m,1) }\sum_{\lambda \in \mathsf{Sh}(m,1) }(\pi \cdot \w{j}^{m+1},\ \lambda \circ(\tau\triangleright 1)^{-1}\circ\pi^{-1}).
\end{equation*}
    Since all letters coincide, $\pi \cdot \w{j}^{m+1}=\w{j}^{m+1}$ for all $\pi$, so the first entry is constant. For the second entry, note that $\tau \triangleright 1$ fixes $m+1$, and that a shuffle $\lambda \in \mathsf{Sh}(m,1)$ is determined by the value $\lambda(m+1)$.  Since distinct shuffles give distinct values of $\lambda(m+1)\in \{1,\dots,m+1\}$, and $\tau$ ranges over all of $\Sigma_m$, the map 
    \begin{equation*}
        \mathsf{Sh}(m,1)\times \Sigma_m \to \Sigma_{m+1}, \qquad (\lambda,\tau) \mapsto \lambda \circ (\tau \triangleright1)^{-1},
    \end{equation*}
    is a bijection: any $\rho \in \Sigma_{m+1}$ is recovered by taking $\lambda$ to be the unique shuffle with $\lambda(m+1)=\rho(m+1)$ and consider $\tau:=\lambda^{-1}\circ\rho$ as an element in $\Sigma_m$. For each fixed $\pi\in\mathsf{Sh}(m,1)$, right-composition with $\pi^{-1}$ is itself a bijection of $\Sigma_{m+1}$, so summing over the $m+1$ shuffles $\pi$ gives 
    \begin{equation*}
    (\w{j}^m,\tau)\shuffle (\w{j},1) = \sum_{\pi \in \mathsf{Sh}(m,1) }\sum_{\lambda \in \mathsf{Sh}(m,1) }(\pi \cdot \w{j}^{m+1},\ \lambda \circ(\tau\triangleright 1)^{-1}\circ\pi^{-1})= (m+1) \sum_{\tau \in \Sigma_{m+1}}(\w{j}^{m+1},\tau).
\end{equation*}
Using the induction hypothesis we have
    \begin{align*}
        (\w{j},1)^{\shuffle (m+1)}
        = m!\sum_{\tau \in \Sigma_{m}}\bigl((\w{j}^m,\tau) \shuffle (\w{j},1)\bigr) 
        = (m+1)!\sum_{\tau \in \Sigma_{m+1}} (\w{j}^{m+1},\tau)
    \end{align*}
    which is the claim for $m+1$.
\end{proof}

\begin{lemma}\label{lem:orbit-identity}
Let $\alpha=(\alpha_1,\dots,\alpha_r)$ be a partition of $k$ and 
$i_1,\dots,i_r$ be pairwise distinct letters. Then
\begin{equation*}
(i_1,1)^{\shuffle\alpha_1}\shuffle\cdots\shuffle(i_r,1)^{\shuffle\alpha_r}
=k!\,\binom{k}{\alpha_1,\dots,\alpha_r}^{-1}\sum_{w\in\mathrm{orb}(i^\alpha)}\sum_{\tau\in\Sigma_k}(w,\tau). 
\end{equation*}
\end{lemma}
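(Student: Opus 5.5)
We start from \Cref{lem:diagonal-identity}, which gives $(i_c,1)^{\shuffle\alpha_c}=\alpha_c!\sum_{\tau\in\Sigma_{\alpha_c}}(i_c^{\alpha_c},\tau)$ for each $c$. Note that the constant in the claim simplifies, since $k!\binom{k}{\alpha_1,\dots,\alpha_r}^{-1}=\alpha_1!\cdots\alpha_r!$. Because $\shuffle$ is bilinear and associative (\Cref{thm:HopfAlgerbra}), the statement therefore reduces to a purely combinatorial identity:
\begin{equation*}
\Bigl(\sum_{\tau_1\in\Sigma_{\alpha_1}}(i_1^{\alpha_1},\tau_1)\Bigr)\shuffle\cdots\shuffle\Bigl(\sum_{\tau_r\in\Sigma_{\alpha_r}}(i_r^{\alpha_r},\tau_r)\Bigr)
=\sum_{w\in\mathrm{orb}(i^\alpha)}\sum_{\tau\in\Sigma_k}(w,\tau).
\end{equation*}
We prove this by induction on $r$. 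The case $r=1$ is trivial. For the inductive step, set $S_m(\beta,i):=\sum_{w\in\mathrm{orb}(i^\beta)}\sum_{\tau\in\Sigma_m}(w,\tau)$. It then suffices to show the two-block identity
\begin{equation*}
S_m(\beta,(i_1,\dots,i_{r-1}))\shuffle\sum_{\rho\in\Sigma_n}(i_r^{n},\rho)=S_{m+n}\bigl((\beta,n),(i_1,\dots,i_r)\bigr),
\end{equation*}
where $m=\alpha_1+\dots+\alpha_{r-1}$ and $n=\alpha_r$.

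To prove the two-block identity, we expand with \Cref{def:twoParamSh}. The left side is a sum over quintuples $(u,\tau,\rho,\pi,\lambda)$, where $u\in\mathrm{orb}(i^\beta)$, $\tau\in\Sigma_m$, $\rho\in\Sigma_n$ and $\pi,\lambda\in\mathsf{Sh}(m,n)$. Each quintuple produces the term $(\pi\cdot u\,i_r^n,\ \lambda\circ(\tau\triangleright\rho)^{-1}\circ\pi^{-1})$. We treat the two coordinates separately.

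For the word coordinate, the map $(u,\pi)\mapsto\pi\cdot(u\,i_r^n)$ from $\mathrm{orb}(i^\beta)\times\mathsf{Sh}(m,n)$ to $\mathrm{orb}(i^\alpha)$ is a bijection. The reason is that $i_r$ differs from all other letters: the positions of $i_r$ in the image determine $\pi$, and the remaining letters, read in order, recover $u$. We must check the action convention \eqref{eq:permActsOnWords} carefully here, namely that $\pi$ sends position $j$ of $u\,i_r^n$ to position $\pi(j)$, so that $\pi(m+1),\dots,\pi(m+n)$ are exactly the positions of $i_r$.

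For the permutation coordinate, we fix $(u,\pi)$. The map $(\lambda,\tau,\rho)\mapsto\lambda\circ(\tau\triangleright\rho)^{-1}$ from $\mathsf{Sh}(m,n)\times\Sigma_m\times\Sigma_n$ to $\Sigma_{k}$ is a bijection. This is the standard unique factorization of a permutation into a shuffle times a Young-subgroup element, generalizing the $(m,1)$ argument in the proof of \Cref{lem:diagonal-identity}. Given $\sigma\in\Sigma_k$, we take $\lambda$ to be the unique shuffle with $\lambda(\{1..m\})=\sigma(\{1..m\})$; then $\lambda^{-1}\sigma$ preserves the two blocks and decomposes as $(\tau\triangleright\rho)^{-1}$. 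Right composition by $\pi^{-1}$ is again a bijection of $\Sigma_k$.

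Combining the two bijections, every pair $(w,\sigma)$ with $w\in\mathrm{orb}(i^\alpha)$ and $\sigma\in\Sigma_k$ appears exactly once, which proves the two-block identity and completes the induction.

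The main obstacle is the word bijection, specifically making sure that the word permutation $\pi$ and the permutation coordinate interact consistently. For each fixed $\pi$ we only need surjectivity and injectivity in $\sigma$, and the $\pi^{-1}$ twist is harmless for that. The genuinely delicate point is the distinctness of the letters: if some $i_c$ coincided with $i_r$, we would get overcounting, which is exactly why the hypothesis of pairwise distinct letters is needed.
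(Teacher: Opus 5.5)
Your proof is correct, but it is organized differently from the paper's.

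The paper does not invoke \Cref{lem:diagonal-identity}. Instead it reruns the same one-letter induction, using only $(m,1)$-shuffles, to prove the stronger identity \eqref{eq:arbitrary-letters}: $(i_1,1)\shuffle\cdots\shuffle(i_k,1)=\sum_{\pi\in\Sigma_k}\sum_{\tau\in\Sigma_k}(\pi\cdot i_1\ldots i_k,\tau)$ for arbitrary, not necessarily distinct, letters. It then specializes to the word $i^\alpha$ and groups the sum over $\pi$ by the resulting word. The constant $\alpha_1!\cdots\alpha_r!=k!\binom{k}{\alpha_1,\dots,\alpha_r}^{-1}$ appears as the number of $\pi$ with $\pi\cdot i^\alpha=w$, and this stabilizer count is the only place where distinctness is used.

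You instead absorb the repeated letters at the outset via \Cref{lem:diagonal-identity}, so the constant comes for free. You then induct on the number $r$ of distinct letters, shuffling in whole blocks. This requires the general factorization of a permutation in $\Sigma_{m+n}$ as an $(m,n)$-shuffle composed with a Young-subgroup element $(\tau\triangleright\rho)^{-1}$. It also requires the word bijection $\mathrm{orb}(i^\beta)\times\mathsf{Sh}(m,n)\to\mathrm{orb}(i^\alpha)$, where distinctness of $i_r$ from the other letters is what rules out overcounting.

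The convention check you flag does go through. By \eqref{eq:permActsOnWords}, the letter in position $l$ of $u\,i_r^n$ moves to position $\pi(l)$, so the positions of $i_r$ are exactly $\pi(\{m+1,\dots,m+n\})$, and this set determines the shuffle. The check is not vacuous: with the opposite convention, the map would fail to be injective already for $m=2$, $n=1$.

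What each route buys: the paper's produces a reusable identity valid for any letters, from which \Cref{lem:diagonal-identity} itself follows by taking all letters equal. Yours is more structural, never passes through the $(k!)^2$-term intermediate sum, and makes transparent exactly where the distinctness hypothesis is needed.
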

\begin{proof}
    We first show, by the same induction as in \Cref{lem:diagonal-identity}, that 
\begin{equation}\label{eq:arbitrary-letters}
(i_1,1)\shuffle\cdots\shuffle(i_k,1)
=\sum_{\pi\in\Sigma_k}\ \sum_{\tau\in\Sigma_k}(\pi\cdot i_1\ldots i_k,\ \tau).
\end{equation}
The case $k=1$ is trivial. Assume \eqref{eq:arbitrary-letters} for some $m\geq 1$ and multiply by $(i_{m+1},1)$. Applying \Cref{def:twoParamSh} with $k_1=m$, $k_2=1$ to a term $(\pi\cdot i_1\ldots i_m,\tau)$ gives 
\begin{equation*}
    \sum_{\omega \in \mathsf{Sh}(m,1)} \sum_{\lambda \in \mathsf{Sh}(m,1)} \bigl(\omega \circ (\pi\triangleright 1 )
 \cdot i_1\ldots i_{m+1},\ 
\lambda\circ(\tau\triangleright1)^{-1}\circ\omega^{-1} \bigr) \end{equation*}
where we used $(\pi\cdot i_1\ldots i_m)\,i_{m+1}=(\pi\triangleright1)\cdot
i_1\ldots i_{m+1}$. The map
\begin{equation}\label{eq:proofThmRelationsBijaction}
    \mathsf{Sh}(m,1) \times \Sigma_m \to \Sigma_{m+1},\qquad (\omega,\pi)\mapsto \omega\circ(\pi\triangleright 1).
\end{equation}
is a bijection, thus
\begin{equation*}
\{\omega\circ(\pi\triangleright1)\mid \omega\in\mathsf{Sh}(m,1),\ \pi\in\Sigma_m\}
=\Sigma_{m+1}.
\end{equation*}
For each fixed $\omega$ the same bijection \eqref{eq:proofThmRelationsBijaction} applied to  $(\lambda,\tau)$, followed by right-composition with $\omega^{-1}$ (which is a bijection of $\Sigma_{m+1}$), shows that 
\begin{equation*}
    \{\lambda\circ(\tau\triangleright1)^{-1}\circ\omega^{-1}\mid \lambda\in\mathsf{Sh}(m,1),\ 
\tau\in\Sigma_m\}=\Sigma_{m+1}. 
\end{equation*}
 Therefore, the two indices are independent and each pair in $\Sigma_{m+1}\times\Sigma_{m+1}$
occurs once, which closes the induction for \eqref{eq:arbitrary-letters}. 

We now move on to the word $i^{\alpha}$. For each $w\in \mathrm{orb}(i^{\alpha})$, the number of $\pi \in \Sigma_k$ with $\pi\cdot i^{\alpha}=w$ equals the number of ways to permute positions within each block of equal letters, namely $\alpha!:=\alpha_1!\dots\alpha_r!$, independent of $w$. Combining this with the cardinality
\begin{equation*}
    \bigl |\mathrm{orb}(i^{\alpha})\bigr|= \binom{k}{\alpha_1,\dots,\alpha_r},
\end{equation*}
we obtain
\begin{equation*}
   \alpha!=\binom{k}{\alpha_1,\dots,\alpha_r}^{-1}k!
\end{equation*}
 for every $w\in \mathrm{orb}(i^{\alpha})$. Grouping the sum over $\pi$ in \eqref{eq:arbitrary-letters} by $\pi \cdot i^{\alpha}$ gives the claim.
\end{proof}

\begin{proof}[Proof of \Cref{cor:shuffle-relations}]

Let $\sigma_j:=\sigma_{(\mathbf{j},1)}$ denote the $j$th coordinate of the first
graded component of $\sigma(X)$. By the shuffle identity
(\Cref{thm:shuffle-identity}) and \Cref{lem:diagonal-identity},
\begin{equation}\label{eq:diagonal-identity-cor}
\sigma_{i_c}^{\,k}
=\sigma_{(i_c,1)^{\shuffle k}}
=k!\,\Theta(k,i_c).
\end{equation}
Analogously, by \Cref{lem:orbit-identity},
\begin{equation}\label{eq:base-identity}
\prod_{c=1}^{r}\sigma_{i_c}^{\alpha_c}
=\sigma_{(i_1,1)^{\shuffle\alpha_1}\shuffle\cdots\shuffle(i_r,1)^{\shuffle\alpha_r}}
=\binom{k}{\alpha_1,\dots,\alpha_r}^{-1}k!\,\Theta(\alpha,i).
\end{equation}
Note that $p_\alpha$ is the least positive integer with $k\mid p_\alpha\alpha_c$ for
all $c$. Hence, raising to the power $p_\alpha$ and combining \eqref{eq:diagonal-identity-cor} with \eqref{eq:base-identity}
gives us
\begin{equation*}
\Bigl(\binom{k}{\alpha_1,\dots,\alpha_r}^{-1}k!\,\Theta(\alpha,i)\Bigr)^{p_\alpha}
=\prod_{c=1}^{r}\sigma_{i_c}^{\,p_\alpha\alpha_c}
=\prod_{c=1}^{r}\bigl(k!\,\Theta(k,i_c)\bigr)^{p_\alpha\alpha_c/k}
\end{equation*}
so dividing both sides by $(k!)^{p_\alpha}$ yields the claim. 
\end{proof}

\section{Varieties of signature tensors}\label{sec:Varieties}

In this section, following \cite{bib:AFS2019}, we work over the algebraically closed field $\C$ and introduce and relate the
signature varieties of two natural families of membranes. 
All of these live in a universal variety for membranes which we construct as follows. 

By collecting all signature tensors \eqref{align:defDEFHTsig} with permutations in the same $\Sigma_k$ we obtain 
\begin{equation}\label{eq:signature_tensor}
\sigma^{(k)}(X):=\bigoplus_{\tau\in\Sigma_k}\sigma^{(\tau)}(X)\in\C^{d\times\dots\times d\times k!}
\end{equation}
which is the $k$th graded component of $\sigma(X)$, with the grading introduced in \Cref{thm:HopfAlgerbra}. By enumerating $\Sigma_k$, we identify the \emph{$k$-level signature} $\sigma^{(k)}(X)$ by a $d\times\dots\times d\times k!$ tensor of order $k+1$.  
In the previous section we saw that the entries of these tensors satisfy certain relations (\Cref{cor:shuffle-relations}) that come from the two-parameter shuffle product, \Cref{thm:shuffle-identity}.  In this section, we formally construct all such relations from a fixed graded component \eqref{eq:signature_tensor}. 
Inspired by the construction \cite[Section 4.3]{bib:AFS2019} for paths, we consider 
the polynomial ring $\C[\sigma^{(\leq k)}]:=\C[\sigma_{(w,\tau)}\mid (w,\tau)\in C,\len(w)\leq k]$ and the ideal
$$G_{d,k}:=\langle \sigma_{(w_1,\tau_1)}\sigma_{(w_2,\tau_2)}-\sigma_{(w_1,\tau_1)\shuffle (w_2,\tau_2)}\mid \len(w_1)+\len(w_2)\leq k\rangle$$
with respect to the two-parameter shuffle product, \Cref{def:twoParamSh}. 
This ideal is not homogeneous with respect to the degree. Let $\mathcal{G}_{d,k}$ denote the algebraic variety  given by the vanishing set of $G_{d,k}$. Here, the ambient space has dimension $1+d+2d^2+\dots +k!d^k$. 

\begin{example}\label{ex:G23}
The ideal $G_{2,3}$ lives in the polynomial ring $\C[\sigma^{(\leq 3)}]$ with 
$58$ variables. It is generated by $19$ shuffle relations: three of  $\weight$ $2$, 
\begin{align*}
\sigma_{(\w{1},1)}^{\,2} &- 2\sigma_{(\w{11},1\,2)} - 2\sigma_{(\w{11},2\,1)},\\
\sigma_{(\w{1},1)}\sigma_{(\w{2},1)} &- \sigma_{(\w{12},1\,2)} - \sigma_{(\w{12},2\,1)} - \sigma_{(\w{21},1\,2)} - \sigma_{(\w{21},2\,1)},\\
\sigma_{(\w{2},1)}^{\,2} &- 2\sigma_{(\w{22},1\,2)} - 2\sigma_{(\w{22},2\,1)},
\end{align*}
and 16 relations of $\weight$ $3$, such as
\begin{equation*}
\sigma_{(\w{1},1)}\sigma_{(\w{11},1\,2)} - 3\sigma_{(\w{111},1\,2\,3)} - 2\sigma_{(\w{111},1\,3\,2)} -2\sigma_{(\w{111},2\,1\,3)}- \sigma_{(\w{111},2\,3\,1)} - \sigma_{(\w{111},3\,1\,2)}.
\end{equation*}
The variety $\mathcal{G}_{2,3} \subset \C^{58}$ has dimension 41 and degree 39.
\end{example}

\begin{proposition}\label{prop:free-group-variety}
For all $d\geq 1$ and $k\geq 1$, the ideal $G_{d,k}$ is prime, so $\mathcal{G}_{d,k}$ is irreducible.
\end{proposition}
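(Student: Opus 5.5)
Fix $d$ and $k$. The natural approach is to show that $\C[\sigma^{(\leq k)}]/G_{d,k}$ is isomorphic to a polynomial ring. A polynomial ring is a domain, so this gives primality of $G_{d,k}$ and irreducibility of $\mathcal{G}_{d,k}$. This mirrors the argument of \cite[Section 4.3]{bib:AFS2019} for paths, where freeness of the shuffle algebra is the essential input. Here it is replaced by \Cref{cor:free}.

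First, \Cref{cor:free} says that $(H,\shuffle)$ (tensored to $\C$) is a free commutative graded algebra. So it is a polynomial algebra $\C[g_1,g_2,\dots]$ on homogeneous generators $g_j$, for instance the Lyndon-type generators of \cite{schmitz2024freegeneratorshoffmansisomorphism}. Let $L_{\leq k}$ be the set of those generators of weight at most $k$. Since the grading is by weight and $\shuffle$ is weight-additive, every basis element $(w,\tau)$ of weight $m\leq k$ is a polynomial $P_{(w,\tau)}$ in the generators of $L_{\leq k}$. Moreover, the number of generators of weight exactly $m$ equals $\dim H_m$ minus the dimension of the weight-$m$ part of the subalgebra generated in lower weights.

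Second, define the surjective $\C$-algebra map $\phi:\C[\sigma^{(\leq k)}]\to \C[L_{\leq k}]$ by $\sigma_{(w,\tau)}\mapsto P_{(w,\tau)}$, with $\sigma_{\mathrm{e}}\mapsto 1$. It is surjective because each generator $g$ is a linear combination of basis elements of its weight. Every generator of $G_{d,k}$ maps to $0$, since $P$ is multiplicative for $\shuffle$ on products of total weight at most $k$. Hence $G_{d,k}\subseteq\ker\phi$. The relation $\sigma_{\mathrm{e}}\cdot\sigma_{\mathrm{e}}-\sigma_{\mathrm{e}}$ needs separate care. Either $\sigma_{\mathrm{e}}$ is already set to $1$ through the relation $\sigma_{\mathrm{e}}\sigma_{(w,\tau)}-\sigma_{(w,\tau)}$, or one treats the ambient variable $\sigma_{\mathrm{e}}$ as in the path case. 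I would check this against the count $1+d+\dots$ of ambient variables.

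Third, for the reverse inclusion I would show by induction on weight that, modulo $G_{d,k}$, every variable $\sigma_{(w,\tau)}$ of weight $m$ is congruent to a linear combination of the variables indexing a fixed basis of the weight-$m$ generators, plus polynomials in lower-weight variables. For this, choose for each $m$ a complement: decompose $H_m = (\text{decomposables})_m \oplus \mathrm{span}(\text{generators}_m)$. Each decomposable element is a linear combination of shuffles $u\shuffle v$ with weights summing to $m\leq k$. By the defining relations of $G_{d,k}$, the corresponding linear combination of variables is congruent to a polynomial in lower weights. It follows that $\C[\sigma^{(\leq k)}]/G_{d,k}$ is generated by $|L_{\leq k}|$ elements. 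So the quotient is a quotient of a polynomial ring in $|L_{\leq k}|$ variables that surjects onto $\C[L_{\leq k}]$. The composite is a surjective endomorphism of a polynomial ring sending generators to generators, hence an isomorphism, and so $\ker\phi=G_{d,k}$.

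The main obstacle is this third step. One has to track the linear-algebra bookkeeping of choosing variables matching a basis modulo decomposables, and verify that the truncation $\len(w_1)+\len(w_2)\leq k$ in $G_{d,k}$ provides exactly the relations needed for all decomposables up to weight $k$. The case $d=1$ must also be allowed here, even though this section otherwise assumes $d\geq 2$. Freeness itself is supplied by \Cref{cor:free}, so no further Hopf-algebraic input should be needed.
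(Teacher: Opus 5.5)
Your proposal follows the paper's proof, which simply asserts that $\C[\sigma^{(\leq k)}]/G_{d,k}$ is the polynomial ring on the free generators of weight at most $k$ supplied by \Cref{cor:free}. Your map $\phi$, the induction modulo decomposables, and the argument that $\C[z_j]\to\C[\sigma^{(\leq k)}]/G_{d,k}\to\C[L_{\leq k}]$ is an isomorphism are exactly the details behind that one line, and none of them uses $d\geq 2$.

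The one correction concerns $\sigma_{\mathrm{e}}$: your first option is wrong, because $(\sigma_{\mathrm{e}}-1)\sigma_{(w,\tau)}\in G_{d,k}$ does not force $\sigma_{\mathrm{e}}=1$. If $\sigma_{\mathrm{e}}$ were a variable and the relations involving $\mathrm{e}$ were included, $G_{d,k}$ would contain $\sigma_{\mathrm{e}}(\sigma_{\mathrm{e}}-1)$ and $\mathcal{V}(G_{d,k})$ would pick up the origin as an extra isolated point. So the statement must be read as in \Cref{ex:G23} ($58$ variables, $19$ generators), where $\sigma_{\mathrm{e}}$ is not a coordinate, and with that reading your argument is complete.
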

\begin{proof}
By \Cref{cor:free} the shuffle algebra $H$ is free, and $G_{d,k}$ is generated by the shuffle relations among coordinates of weight at most $k$. Hence, the quotient $\C[\sigma^{(\leq k)}]/G_{d,k}$ is the truncation of $H$ by weight $k$ or lower, which is a polynomial ring on the generators of $V_{\leq k}$ as a free commutative algebra. A polynomial ring is an integral domain, so $G_{d,k}$ is prime and $\mathcal{G}_{d,k}$ is thus irreducible.
\end{proof}

The \emph{universal ideal} for order $k+1$ tensors of format $d\times\dots\times d \times k!$ is defined as 
\begin{equation}\label{eq:defUniversalVarietyIdeal}U_{d,k}:=G_{d,k}\cap \C[\sigma^{(k)}]
\end{equation}
with the polynomial sub-ring $\C[\sigma^{(k)}]:=\C[\sigma_{(w,\tau)}\mid (w,\tau)\in C,\len(w)=k]$. This is a homogeneous prime ideal in $k!d^k$ variables. 
We define the \emph{$k$-level universal variety for membranes} $\mathcal{U}_{d,k}$ to be the zero set of the ideal $U_{d,k}$ in the affine space $\C^{d\times \dots\times d\times k!}$. 

\begin{example}\label{ex:U23} We revisit \Cref{ex:G23} and compute $U_{2,3}$. 
Here $\C[\sigma^{( 3)}]$ has
$48$ variables. Eliminating
from $G_{2,3}$ the $10$ variables $\sigma_{(w,\tau)}$
with $\len(w)\leq 2$ gives $U_{2,3}$, which 
is minimally generated by $28$ quadrics, the smallest with $90$ terms. The variety $\mathcal{U}_{2,3}$ has dimension 41 and degree 8.
\end{example}

Note that the orbit
sum relations from the previous section do not suffice to cut out
$\mathcal{U}_{d,k}$, as the following example shows.

\begin{example}\label{ex:orbit-sums-do-not-determine-U23}
The orbit sum relations of \Cref{cor:shuffle-relations} are not sufficient to cut out $\mathcal{U}_{d,k}$, already at $d=2$ and $k=3$. Only the partition $(2,1)$ contributes, since $(1,1,1)$ requires three pairwise distinct letters. The tuples $(1,2)$ and $(2,1)$ yield two relations of total degree $9$ in the $48$ coordinates $\sigma_{(w,\tau)}$. Let $I_{\mathrm{orb}}$ be the ideal they generate. With the drop of dimension $\dim\mathcal{V}(I_{\mathrm{orb}})=46>\dim\mathcal{U}_{2,3}=41$,
we see $\mathcal{U}_{2,3}\subsetneq\mathcal{V}(I_{\mathrm{orb}})$ strictly.   
\end{example}

With the shuffle identity (\Cref{thm:shuffle-identity}) we see that the projection of a truncated signature to its highest graded component is finite-to-one. 

\begin{theorem}\label{prop:proojectionResult}
    The projection from $\mathcal{G}_{d,k}$ to $\mathcal{U}_{d,k}$ is generically $k$-to-one. 
\end{theorem}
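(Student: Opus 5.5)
Consider the coordinate projection $p:\mathcal{G}_{d,k}\to \C^{d\times\dots\times d\times k!}$, which forgets all coordinates $\sigma_{(w,\tau)}$ with $\len(w)<k$ and keeps the $k$-level tensor. By the elimination description \eqref{eq:defUniversalVarietyIdeal}, the closure of $p(\mathcal{G}_{d,k})$ is $\mathcal{U}_{d,k}$. So it suffices to show that a generic fibre of $p$ has exactly $k$ points. The plan is to describe the fibres through the graded structure of $H$. The relevant tool is the dilation $\sigma_{(w,\tau)}\mapsto \lambda^{\len(w)}\sigma_{(w,\tau)}$. This is an automorphism of $\mathcal{G}_{d,k}$, because every generator of $G_{d,k}$ is weight-homogeneous once the constant $\sigma_{\mathrm e}=1$ is taken into account. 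For $\lambda$ a $k$th root of unity it fixes the $k$-level coordinates. This already gives at least $k$ points in a generic fibre, since $\mu_k$ acts freely on points with $\sigma_{(\w{1},1)}\neq 0$, and such points form a dense open subset of the irreducible variety $\mathcal{G}_{d,k}$ (\Cref{prop:free-group-variety}).

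For the reverse inequality I would show that the $k$-level tensor determines all lower levels up to this $\mu_k$-action. The key relations are those of \Cref{lem:diagonal-identity} and \Cref{lem:orbit-identity}, together with the general shuffle identity \Cref{thm:shuffle-identity}.

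First, \eqref{eq:diagonal-identity-cor} gives $\sigma_{(\w{1},1)}^k=k!\,\Theta(k,\w{1})$. So on a generic point $\sigma_{(\w{1},1)}$ is one of $k$ possible $k$th roots of a quantity read off from the top level.

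Next, fix such a root. For any $(w,\tau)$ of weight $m<k$, the shuffle identity gives
\[
\sigma_{(w,\tau)}\cdot\sigma_{(\w{1},1)}^{k-m}=\sigma_{(w,\tau)\shuffle(\w{1},1)^{\shuffle(k-m)}},
\]
and the right-hand side is a linear form in the $k$-level coordinates. Generically $\sigma_{(\w{1},1)}\neq0$, so this determines $\sigma_{(w,\tau)}$ uniquely as a rational function of the top level and the chosen root. Hence each fibre over the dense open set where $\Theta(k,\w{1})\neq 0$ has at most $k$ points, and these are exactly one $\mu_k$-orbit.

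The main obstacle is making the degree count rigorous. Two points need checking: that the $k$ roots really give $k$ distinct points of $\mathcal{G}_{d,k}$, and that $p$ is dominant onto $\mathcal{U}_{d,k}$ so that "generic" is meaningful there. Both follow from the dilation action. Distinctness holds because the weight-$1$ coordinate $\sigma_{(\w{1},1)}$ is multiplied by $\lambda\neq1$. Dominance holds by the definition of $U_{d,k}$ as an elimination ideal, using closedness of images after taking closure.

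I would also double-check one possible subtlety: that $\Theta(k,\w{1})$ does not vanish identically on $\mathcal{G}_{d,k}$. This holds because it equals $\sigma_{(\w{1},1)}^k/k!$ and $\sigma_{(\w{1},1)}$ is a free generator. It follows that generic fibres have cardinality exactly $k$.
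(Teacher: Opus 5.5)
Your proposal is correct and follows essentially the same route as the paper. The dilation by $k$th roots of unity gives the lower bound, the identity $\sigma_{(\w{a},1)}^k=k!\,\Theta(k,\w{a})$ fixes the distinguished coordinate up to a choice of root, and shuffling with $(\w{a},1)$ recovers all lower levels. The differences are cosmetic: you use the one-step identity $\sigma_{(w,\tau)}\sigma_{(\w{1},1)}^{k-m}=\sigma_{(w,\tau)\shuffle(\w{1},1)^{\shuffle(k-m)}}$ where the paper iterates a descending recursion, and you justify $\Theta(k,\w{1})\not\equiv0$ via freeness of $H$ rather than via membranes with nonzero first level.
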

\begin{proof}
Write $\operatorname{pr}\colon\mathcal{G}_{d,k}\to\mathcal{U}_{d,k}$ for the projection that forgets every coordinate of weight less than $k$. As $U_{d,k}=G_{d,k}\cap\C[\sigma^{(k)}]$ is the corresponding elimination ideal, $\mathcal{U}_{d,k}$ is the Zariski closure of $\operatorname{pr}(\mathcal{G}_{d,k})$ and $\operatorname{pr}$ is dominant. The argument adapts \cite[Theorem 6.1]{bib:AFS2019} to the two-parameter setting, with the reconstruction of the lower path tensors replaced by the descending recursion, derived below. 

For $\xi\in\C^{*}$ let $\delta_\xi$ act on $\C[\sigma^{(\leq k)}]$ by $\delta_\xi\sigma_{(w,\tau)}:=\xi^{\len(w)}\sigma_{(w,\tau)}$. Note that each generator $\sigma_{(w_1,\tau_1)}\sigma_{(w_2,\tau_2)}-\sigma_{(w_1,\tau_1)\shuffle(w_2,\tau_2)}$ of $G_{d,k}$ is homogeneous of weight $\len(w_1w_2)$, so $\delta_\xi$ preserves $\mathcal{G}_{d,k}$ and satisfies $\operatorname{pr}(\delta_\xi\sigma)=\xi^{k}\,\operatorname{pr}(\sigma)$.

Since $\mathcal{G}_{d,k}$ contains signatures of membranes with nonvanishing first graded component, \Cref{thm:shuffle-identity} and \Cref{lem:diagonal-identity} show that the linear form $\Theta(k,\mathbf a)=\sum_{\tau\in\Sigma_k}\sigma_{(\mathbf a^{k},\tau)}$ on $\C[\sigma^{(k)}]$ does not vanish identically on $\mathcal{U}_{d,k}$ for at least one letter $\mathbf a\in\{\w{1},\dots,\w{d}\}$. Fix such a letter, abbreviate $\sigma_{\mathbf a}:=\sigma_{(\mathbf a,1)}$, and set
$
\mathcal{U}_{d,k}^{\circ}:=\{\,s\in\mathcal{U}_{d,k}\mid \Theta(k,\mathbf a)(s)\neq0\,\},
$
a dense open subset of $\mathcal{U}_{d,k}$. We prove that $\operatorname{pr}^{-1}(s)$ has exactly $k$ points for every $s\in\mathcal{U}_{d,k}^{\circ}$.
Let $\sigma\in\mathcal{G}_{d,k}$ with $\operatorname{pr}(\sigma)=s$. By \Cref{thm:shuffle-identity} and \Cref{lem:diagonal-identity},
\begin{equation}\label{eq:fibre-root}
\sigma_{\mathbf a}^{\,k}=\sigma_{(\mathbf a,1)^{\shuffle k}}=k!\,\Theta(k,\mathbf a)(s),
\end{equation}
whose right-hand side is a nonzero scalar determined by $s$. Hence $\sigma_{\mathbf a}$ is one of the $k$ distinct $k$th roots of $k!\,\Theta(k,\mathbf a)(s)$, and in particular $\sigma_{\mathbf a}\neq0$.

The scalar $\sigma_{\mathbf a}$ determines the remaining coordinates of $\sigma$. For $1\leq\ell\leq k-1$ and every $(w,\tau)\in C$ with $\len(w)=\ell$, the shuffle relation of weight $\ell+1$ gives, by \Cref{thm:shuffle-identity}, the descending recursion:
\begin{equation}\label{eq:fibre-recursion}
\sigma_{(w,\tau)}=\sigma_{\mathbf a}^{-1}\,\sigma_{(\mathbf a,1)\shuffle(w,\tau)},
\end{equation}
where $\sigma_{(\mathbf a,1)\shuffle(w,\tau)}$ is a linear form in the coordinates of weight $\ell+1$. Evaluated downwards from $\ell=k-1$ to $\ell=1$, starting from the prescribed top component $\sigma^{(k)}=s$, the recursion \eqref{eq:fibre-recursion} expresses every coordinate of $\sigma$ through $s$ and $\sigma_{\mathbf a}$. Thus $s$ together with the choice of root in \eqref{eq:fibre-root} recovers $\sigma$ uniquely, so $\operatorname{pr}^{-1}(s)$ has at most $k$ points.

In addition, each root is realized. Choose one preimage $\sigma$ of $s$. For a $k$th root of unity $\zeta$ the point $\delta_\zeta\sigma$ lies in $\mathcal{G}_{d,k}$ and satisfies $\operatorname{pr}(\delta_\zeta\sigma)=\zeta^{k}s=s$, with distinguished coordinate $(\delta_\zeta\sigma)_{\mathbf a}=\zeta\,\sigma_{\mathbf a}$. As $\zeta$ runs over the $k$th roots of unity these coordinates take the $k$ distinct values in \eqref{eq:fibre-root}, so the points $\delta_\zeta\sigma$ are pairwise distinct and exhaust $\operatorname{pr}^{-1}(s)$. Therefore $\operatorname{pr}$ is generically $k$-to-one.
\end{proof}

Keeping in mind that we can recover the lower signature levels from a fixed level signature tensor, we focus on the $k$th level.  
We study the relations from $U_{d,k}$ and additional relations, arising by a restriction to piecewise bilinear and polynomial membranes. For this we consider \eqref{eq:signature_tensor} as a polynomial map from a specific family of membranes to tensors, and verify that its image is a semialgebraic set. 
Central to this construction is the following action of matrices on membranes and its corresponding notion of dictionaries. This approach is inspired by the path setting \cite{bib:PSS2019}. 

\begin{definition}
Let $A \in \mathbb{R}^{d \times m}$ and 
$X : [0,1]^2 \to \mathbb{R}^m$ be a membrane.
We define the action of $A$ on $X$ as the membrane 
$A * X : [0,1]^2 \to \mathbb{R}^d$, 
pointwise for all $(s,t)\in[0,1]^2$, by
$(A * X)(s,t) := A\,X(s,t)$, 
i.e., in coordinates, we have for every $1\leq i\leq d$, 
\begin{equation*}
(A * X)_i(s,t)
=
\sum_{j=1}^m A_{ij} X_j(s,t). 
\end{equation*}
\end{definition}

A \emph{polynomial membrane of order $(m,n)$} is a membrane whose component functions are polynomials of multidegree bounded by $(m,n)$. 
According to \cite[Equation (2)]{LS24}, this family of membranes admits a dictionary, the \emph{moment membrane} 
    \begin{align}\label{eq:into_momentMembrane}
        \mathsf{Mom}^{m,n}: [0,1]^2 &\to \mathbb R^{mn},(s,t)\mapsto(s,s^2,\dots,s^m)\otimes(t,t^2,\dots,t^n).
    \end{align}
Using the language of dictionaries, the family of polynomial membranes of order $(m,n)$ with trivial linear part is given by $\R^{d\times mn}*\mathsf{Mom}^{m,n}$. 

\begin{example}\label{ex:polynmialMembrane} For a matrix $A$ of size $2\times 4$, we obtain 
  \begin{equation*}A*\mathsf{Mom}^{2,2}:[0,1]^2\to\R^2,\;(s,t)\mapsto\begin{pmatrix}
    A_{11}st+A_{12}st^2+A_{13}s^2t+A_{14}s^2t^2\\
    A_{21}st+A_{22}st^2+A_{23}s^2t+A_{24}s^2t^2
\end{pmatrix}
\end{equation*}
that is, a polynomial membrane of order $(2,2)$ with trivial linear part in $s$ and $t$. For a visualization of such a membrane, with explicit $A$, we refer to \cite[Figure 3]{LS24}. 
\end{example}

We now read the $k$th graded component of the signature \eqref{eq:signature_tensor} as a map and consider its image
\begin{equation}\label{eq:defP_im_dkmn}
\mathcal{P}_{d,k,(m,n)}^{\mathsf{im}}
:=
\left\{\sigma^{(k)}(X)\; \begin{array}{|l}
X:[0,1]^2\to\mathbb{R}^d \\\text{polynomial }\\\text{of order }(m,n)
\end{array}
\right\}
\end{equation}
when restricted to polynomial membranes of a fixed order. 
To study the polynomials that vanish on these sets, we move
to the algebraically closed field $\C$ and denote by $\mathcal{P}_{d,k,(m,n)}$ the Zariski closure of \eqref{eq:defP_im_dkmn}.   Clearly, we have 
$\mathcal{P}_{d,k,(m,n)}\subseteq\mathcal{U}_{d,k}$. 

We define an action on the $k$th graded component of signatures, so that we can formulate a corresponding equivariance result, and thus represent \eqref{eq:defP_im_dkmn} as a semialgebraic set. 

\begin{definition}\label{def:congruence}
Let $A \in \mathbb{R}^{d \times m}$ and
$G \in \mathbb{R}^{m \times \dots \times m \times k!}$ be any $(k+1)$-tensor.
We define the \emph{action} of $A$ on $G$ as the $(k+1)$-tensor
$A * G \in \mathbb{R}^{d \times \dots \times d \times k!}$ given by
\begin{equation*}
(A * G)_{i_1\dots i_kt}
:=
\sum_{j_1,\dots,j_k=1}^{m}
A_{i_1j_1} \dots A_{i_kj_k}\,
G_{j_1\dots j_kt}.
\end{equation*}
\end{definition}

\begin{lemma}\label{lem:equivariance}
   For all $X : [0,1]^2 \to \mathbb{R}^m$, 
$A \in \mathbb{R}^{d\times m}$, and $k\geq 1$, 
\begin{equation*}
\sigma^{(k)}(A*X)
=
A * \sigma^{(k)}(X).
\end{equation*}
\end{lemma}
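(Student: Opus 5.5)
The plan is a direct entrywise computation: multilinearity of the defining integral \eqref{align:defDEFHTsig} in the $k$ integrand factors, applied separately for each fixed permutation $\tau\in\Sigma_k$. Since $\sigma^{(k)}(X)=\bigoplus_{\tau}\sigma^{(\tau)}(X)$ and the action of \Cref{def:congruence} does not touch the last mode (the one indexed by $\Sigma_k$), it suffices to show that for every $\tau\in\Sigma_k$ and all indices $1\le i_1,\dots,i_k\le d$,
\begin{equation*}
\sigma^{(\tau)}_{i_1\dots i_k}(A*X)=\sum_{j_1,\dots,j_k=1}^{m}A_{i_1j_1}\cdots A_{i_kj_k}\,\sigma^{(\tau)}_{j_1\dots j_k}(X).
\end{equation*}

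The first step is to note that the mixed partial derivative commutes with the constant linear map $A$. Because $(A*X)_i=\sum_j A_{ij}X_j$ and $A$ does not depend on $(s,t)$, linearity of differentiation gives $\partial_{12}(A*X)_i(s,t)=\sum_{j=1}^m A_{ij}\,\partial_{12}X_j(s,t)$. In particular, $A*X$ is again a membrane in the sense required for the integrals to exist.

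The second step substitutes this into the integrand $\prod_{r=1}^{k}\partial_{12}(A*X)_{i_r}(s_r,t_{\tau(r)})$. Expanding the product of $k$ finite sums gives $\sum_{j_1,\dots,j_k}\prod_r A_{i_rj_r}\,\partial_{12}X_{j_r}(s_r,t_{\tau(r)})$. The coefficients $A_{i_rj_r}$ are constants, so we can pull them out. Linearity of the integral over the fixed domain $\{s_1\le\dots\le s_k,\ t_1\le\dots\le t_k\}$, with only finitely many summands, then yields exactly $\sum_{j}A_{i_1j_1}\cdots A_{i_kj_k}\,\sigma^{(\tau)}_{j_1\dots j_k}(X)$. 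This is the $(i_1,\dots,i_k,\tau)$ entry of $A*\sigma^{(k)}(X)$ once $\tau$ is identified with the last index $t$ via the fixed enumeration of $\Sigma_k$.

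There is no real obstacle. The only point needing care is bookkeeping: the permutation $\tau$ only reindexes the $t$-arguments and never the letters, so the expansion is identical for every $\tau$ and is compatible with how the $k!$ slices are stacked in \eqref{eq:signature_tensor}. The integrability assumptions for $X$ carry over to $A*X$ by the first step.
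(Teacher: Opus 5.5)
Your proposal is correct and follows essentially the same route as the paper's proof. Both fix $\tau\in\Sigma_k$, use $\partial_{12}(A*X)_i=\sum_j A_{ij}\,\partial_{12}X_j$, expand the product of the $k$ finite sums, and move the constant coefficients and the finite sum outside the integral. Your observation that the action leaves the $\Sigma_k$-mode untouched is what lets the slice-by-slice identity give the full statement.
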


\begin{proof}
We generalize \cite[Lemma 2.6]{LS24} for integration over a permuted simplex.
By \Cref{def:signature_two_parameter}, the components of the signature at slice $\tau$ of order $k$ are
\begin{equation*}
\sigma^{(\tau)}_{j_1\dots j_k}(X)
=
\int\limits_{\substack{0\leq s_1\leq\dots\leq s_k\leq 1\\
0\leq t_1\leq\dots\leq t_k\leq 1}}
\partial_{12}X_{j_1}(s_1,t_{\tau(1)})
\dots
\partial_{12}X_{j_k}(s_k,t_{\tau(k)})
\;\mathrm{d}s_1\mathrm{d}t_1\dots\mathrm{d}s_k\mathrm{d}t_k.
\end{equation*}
Using the congruence action,
$\partial_{12}(A*X)_i(s,t)
=
\sum_{j=1}^m A_{ij}\partial_{12}X_j(s,t)$, so that
\begin{align*}
\sigma^{(\tau)}_{i_1\dots i_k}(A*X)
&=
\int\limits_{\substack{0\leq s_1\leq\dots\leq s_k\leq 1\\
0\leq t_1\leq\dots\leq t_k\leq 1}}
\prod_{r=1}^k
\left(
\sum_{j_r=1}^m
A_{i_rj_r}\,
\partial_{12}X_{j_r}(s_r,t_{\tau(r)})
\right)
\mathrm{d}s_1\mathrm{d}t_1\dots\mathrm{d}s_k\mathrm{d}t_k
\\
&=
\sum_{j_1,\dots,j_k=1}^m
A_{i_1j_1}\dots A_{i_kj_k}
\,
\sigma^{(\tau)}_{j_1\dots j_k}(X)
=(A * \sigma^{(\tau)}(X))_{i_1\dots i_k}
\end{align*}
where the second equality expands the product over $r$ and applies Fubini's theorem to move the finite sum outside the integral.
\end{proof}

\begin{corollary}\label{cor:realizationPimdkmn}
    $\mathcal{P}_{d,k,(m,n)}^{\mathsf{im}}=\mathbb{R}^{d\times mn}*\sigma^{(k)}(\mathsf{Mom}^{m,n})$.
\end{corollary}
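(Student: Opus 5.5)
The plan is to prove the two inclusions separately, using \Cref{lem:equivariance} as the main tool together with the dictionary description of polynomial membranes. The key structural fact is that every polynomial membrane of order $(m,n)$ that enters the signature can be written as $A*\mathsf{Mom}^{m,n}$ for a suitable $A\in\R^{d\times mn}$.

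For ``$\supseteq$'', let $A\in\R^{d\times mn}$. Then $A*\mathsf{Mom}^{m,n}$ is a polynomial membrane $[0,1]^2\to\R^d$ of order $(m,n)$, since each component is a linear combination of the monomials $s^at^b$ with $1\le a\le m$ and $1\le b\le n$. By \Cref{lem:equivariance} with the roles $m\leftarrow mn$ and $X\leftarrow \mathsf{Mom}^{m,n}$, we get $\sigma^{(k)}(A*\mathsf{Mom}^{m,n})=A*\sigma^{(k)}(\mathsf{Mom}^{m,n})$. This tensor therefore lies in $\mathcal{P}^{\mathsf{im}}_{d,k,(m,n)}$.

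For ``$\subseteq$'', let $X$ be a polynomial membrane of order $(m,n)$. The main point is that $\sigma^{(k)}(X)$ only depends on $\partial_{12}X$. We write each component as
\[
X_i(s,t)=\sum_{a=0}^m\sum_{b=0}^n c^{(i)}_{ab}s^at^b.
\]
Differentiating gives $\partial_{12}X_i=\sum_{a,b\ge1} ab\,c^{(i)}_{ab}s^{a-1}t^{b-1}$, so the terms with $a=0$ or $b=0$ are killed. Define $\tilde X:=A*\mathsf{Mom}^{m,n}$, where $A_{i,(a,b)}:=c^{(i)}_{ab}$ and the columns are indexed compatibly with the ordering of $(s,\dots,s^m)\otimes(t,\dots,t^n)$. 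Then $\partial_{12}\tilde X=\partial_{12}X$ componentwise, so \eqref{align:defDEFHTsig} gives $\sigma^{(\tau)}(X)=\sigma^{(\tau)}(\tilde X)$ for every $\tau\in\Sigma_k$. Applying \Cref{lem:equivariance} again yields $\sigma^{(k)}(X)=A*\sigma^{(k)}(\mathsf{Mom}^{m,n})$.

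The only step needing care is this reduction to trivial linear part. It uses that the signature sees $X$ only through the mixed partial, so the pure-$s$, pure-$t$ and constant terms are invisible, and that indexing $A$ to match the tensor ordering of the moment membrane is consistent. I expect no further obstacle, since the equivariance lemma does the rest.
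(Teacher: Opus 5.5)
Your proof is correct and follows the paper's argument, which simply combines \Cref{lem:equivariance} with the definition \eqref{eq:defP_im_dkmn} of $\mathcal{P}^{\mathsf{im}}_{d,k,(m,n)}$. Your reduction of an arbitrary polynomial membrane of order $(m,n)$ to one with trivial linear part, using that the signature depends only on $\partial_{12}X$, spells out a step the paper leaves implicit.
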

\begin{proof}
    This follows with \Cref{lem:equivariance} and \eqref{eq:defP_im_dkmn}. 
\end{proof}

For actual computations, it remains to derive a closed-form expression for the $k$th graded component of the signature for moment membranes, $\sigma^{(k)}(\mathsf{Mom}^{m,n})$. 
For this, we recall the notion of a product membrane from \cite[Section 3]{LS24}. 
We introduce a second action on tensors: 
for $\tau\in\Sigma_k$ and a $k$-tensor $T\in(\R^{d})^{\otimes k}$ we write
$\tau\cdot T$ for the $k$-tensor with entries
\begin{equation}\label{not:tensorTransposition}(\tau\cdot T)_{i_1\dots i_k}:=T_{i_{\tau^{-1}(1)}\dots i_{\tau^{-1}(k)}}.
\end{equation}

For a path $X:[0,1]\to \R^d$ let $\sigma^{(k)}(X)$ denote its $k$-level path signature tensor of $X$, e.g., compare \cite[Equation (6)]{Friz_Victoir_2010} or \cite[Equation (1)]{bib:AFS2019}. 
 
\begin{proposition}\label{twoparamProdMemb}
Let $V,W$ be finite-dimensional vector spaces and let
$X:[0,1]\to V$ and $Y:[0,1]\to W$ be smooth paths. Let
$X\boxtimes Y:[0,1]^2\to V\otimes W$ denote their \emph{product membrane},
$(X \boxtimes Y)(s,t) := X(s) \otimes Y(t)$ with $(s,t) \in [0,1]^2$.
Then, for every $\tau\in\Sigma_k$,
\begin{equation}\label{eq:prodMembraneFactorization}
\sigma^{(\tau)}(X \boxtimes Y) = \sigma^{(k)}(X) \otimes \bigl(\tau\cdot\sigma^{(k)}(Y)\bigr).
\end{equation}
Explicitly, indexing
$V\otimes W$ by pairs,
\begin{equation*}
\sigma^{(\tau)}_{(i_1',i_1'')\dots(i_k',i_k'')}(X\boxtimes Y)
=\sigma^{(k)}(X)_{i_1'\dots i_k'}\cdot\sigma^{(k)}(Y)_{i''_{\tau^{-1}(1)}\dots i''_{\tau^{-1}(k)}} .
\end{equation*}
\end{proposition}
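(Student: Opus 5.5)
The proof is a direct computation from \Cref{def:signature_two_parameter}, using Fubini to split the $2k$-fold integral into an $s$-part and a $t$-part. First, compute the mixed partial of the product membrane coordinatewise. Index $V\otimes W$ by pairs $(i',i'')$, so that $(X\boxtimes Y)_{(i',i'')}(s,t)=X_{i'}(s)\,Y_{i''}(t)$. Then
\[
\partial_{12}(X\boxtimes Y)_{(i',i'')}(s,t)=\dot X_{i'}(s)\,\dot Y_{i''}(t).
\]
Substitute this into \eqref{align:defDEFHTsig} for the word $(i_1',i_1'')\dots(i_k',i_k'')$. The integrand becomes
\[
\prod_{r=1}^k \dot X_{i_r'}(s_r)\cdot\prod_{r=1}^k\dot Y_{i_r''}(t_{\tau(r)}).
\]
The domain is the product of the simplex $\{s_1\le\dots\le s_k\}$ and the simplex $\{t_1\le\dots\le t_k\}$. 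Since the integrand factors into an $s$-part and a $t$-part, Fubini splits the integral into a product of two $k$-fold integrals.

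The $s$-integral is exactly the iterated integral defining the path signature entry $\sigma^{(k)}(X)_{i_1'\dots i_k'}$.

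The $t$-integral needs one reindexing step. Substitute $r=\tau^{-1}(q)$, so that
\[
\prod_{r}\dot Y_{i_r''}(t_{\tau(r)})=\prod_{q=1}^k \dot Y_{i''_{\tau^{-1}(q)}}(t_q).
\]
Integrated over the ordered simplex in $t$, this is the path signature entry $\sigma^{(k)}(Y)_{i''_{\tau^{-1}(1)}\dots i''_{\tau^{-1}(k)}}$. By \eqref{not:tensorTransposition}, that entry equals $(\tau\cdot\sigma^{(k)}(Y))_{i_1''\dots i_k''}$. This gives the explicit coordinate formula. Reading the coordinate formula as an outer product of a $k$-tensor on $V$ and a $k$-tensor on $W$, with indices regrouped into pairs in $V\otimes W$, gives \eqref{eq:prodMembraneFactorization}.

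The only place requiring care is the convention in the reindexing step. One must check that the permutation appears as $\tau^{-1}$ on the indices, matching the action \eqref{not:tensorTransposition} rather than its inverse. One must also make explicit the identification of $(V\otimes W)^{\otimes k}$ with $V^{\otimes k}\otimes W^{\otimes k}$ used in writing $\otimes$ in \eqref{eq:prodMembraneFactorization}. Smoothness of $X$ and $Y$ guarantees that all integrals exist and that Fubini applies on the compact domain.
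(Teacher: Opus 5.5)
Your proposal is correct and follows the paper's proof essentially step by step. You separate the mixed partial into $\dot X_{i'}(s)\,\dot Y_{i''}(t)$, factor the integral over the product of simplices by Fubini, and reindex the $t$-product via $q=\tau(r)$ to obtain $(\tau\cdot\sigma^{(k)}(Y))_{i''_1\dots i''_k}$, exactly as the paper does.
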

 
\begin{proof}
In coordinates, $(X\boxtimes Y)_{(i',i'')}(s,t)=X_{i'}(s)\,Y_{i''}(t)$, so the
mixed partial derivative separates the two variables,
\begin{equation}\label{eq:boxtimesDerivative}
\partial_{12}(X\boxtimes Y)_{(i',i'')}(s,t)=\partial X_{i'}(s)\,\partial Y_{i''}(t).
\end{equation}
Inserting \eqref{eq:boxtimesDerivative} into \Cref{def:2-param-signature} and
using that the domain is a product of two simplices, the integral factorizes as
$\sigma^{(\tau)}_{(i_1',i_1'')\dots(i_k',i_k'')}(X\boxtimes Y)=\sigma_k(X)_{i_1'\dots i_k'}\cdot Q$ with
\begin{align*}
Q&:=\int\limits_{0\le t_1\le\dots\le t_k\le 1}
\partial Y_{i_1''}(t_{\tau(1)})\dots\partial Y_{i_k''}(t_{\tau(k)})\;\mathrm{d}t_1\dots\mathrm{d}t_k .
\end{align*}
The first factor is the $k$th level of the path signature of $X$.  
For the second,
reorder the factors according to the index of the integration variable: putting
$j=\tau(r)$, that is, $r=\tau^{-1}(j)$, the integrand becomes
$\prod_{j=1}^{k}\partial Y_{i''_{\tau^{-1}(j)}}(t_j)$, hence
\begin{equation*}
Q=\sigma_k(Y)_{i''_{\tau^{-1}(1)}\dots i''_{\tau^{-1}(k)}}
 =\bigl(\tau\cdot\sigma_k(Y)\bigr)_{i_1'' \dots i_k''}
\end{equation*}
and collecting the two factors gives the asserted identity.
\end{proof}

\Cref{twoparamProdMemb} reduces the signature of a product membrane
to the path signatures of its two factors. In particular, whenever both
factors are paths whose signatures are known in closed form, so is the signature
of the resulting membrane. The dictionary for polynomial membranes factorizes in this sense. 

\begin{example}\label{ex:productMembrane}
The moment membrane $\mathsf{Mom}^{m,n}=\mathsf{Mom}^{m}\boxtimes\mathsf{Mom}^{n}$ is a product membrane according to \cite[Section 3]{LS24}, where 
$\mathsf{Mom}^{m}:[0,1]\to\R^{m},\;s\mapsto(s,s^{2},\dots,s^{m})$
 denotes the $m$th moment curve. 
\end{example}

We illustrate \Cref{twoparamProdMemb} at truncation level $k=2$ via Kronecker matrix products. 

\begin{corollary}\label{cor:productMembranes}
The $2$-level signature of the moment membrane is 
$$\sigma^{(2)}(\mathsf{Mom}^{m,n})=\left( S_m \otimes S_n\right)\oplus \left(S_m \otimes S_n^\top\right)$$
where $S_d:=(\frac{q}{p+q})_{p,q}$ is the $d\times d$ signature matrix of the $d$th moment curve. 
\end{corollary}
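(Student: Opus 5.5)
We specialize \Cref{twoparamProdMemb} to $k=2$, using the factorization $\mathsf{Mom}^{m,n}=\mathsf{Mom}^{m}\boxtimes\mathsf{Mom}^{n}$ from \Cref{ex:productMembrane}. There are two ingredients: the level-$2$ path signature of the moment curve, and the effect of the action $\tau\cdot$ from \eqref{not:tensorTransposition} for $\tau\in\Sigma_2=\{1\,2,\;2\,1\}$.

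\textbf{The moment curve.} With $X_p(s)=s^p$ we have $\partial X_p(s)=ps^{p-1}$. Hence
\[
\sigma^{(2)}(\mathsf{Mom}^m)_{pq}=\int_0^1\!\!\int_0^{s_2} p s_1^{p-1}\,q s_2^{q-1}\,\mathrm{d}s_1\mathrm{d}s_2=\int_0^1 q s_2^{p+q-1}\,\mathrm{d}s_2=\frac{q}{p+q}.
\]
This is exactly the matrix $S_m$.

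\textbf{The action of $\Sigma_2$.} For a $2$-tensor $T$, the identity acts trivially. The transposition satisfies $(2\,1)^{-1}=(2\,1)$, so $\bigl((2\,1)\cdot T\bigr)_{i_1i_2}=T_{i_2i_1}$, that is, $(2\,1)\cdot T=T^\top$.

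\textbf{Assembling.} Plugging both facts into \eqref{eq:prodMembraneFactorization} gives
\[
\sigma^{(1\,2)}(\mathsf{Mom}^{m,n})=S_m\otimes S_n,\qquad \sigma^{(2\,1)}(\mathsf{Mom}^{m,n})=S_m\otimes S_n^\top.
\]
Taking the direct sum over $\Sigma_2$ as in \eqref{eq:signature_tensor} yields the claimed formula.

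The only delicate point is a bookkeeping one. We must identify the $2$-tensor over $V\otimes W=\R^m\otimes\R^n$, indexed by pairs $\bigl((i_1',i_1''),(i_2',i_2'')\bigr)$, with an $mn\times mn$ matrix. This identification has to be consistent with the ordering of coordinates in $(s,\dots,s^m)\otimes(t,\dots,t^n)$ from \eqref{eq:into_momentMembrane}. We order the row index as $(i_1',i_1'')$ and the column index as $(i_2',i_2'')$, both lexicographically. Under this ordering the entry $\sigma_{i_1'i_2'}(X)\,\sigma_{i_1''i_2''}(Y)$ is precisely the corresponding entry of the Kronecker product, and this check is what justifies writing the answer with $\otimes$ as a Kronecker product.
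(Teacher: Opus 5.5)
Your proposal is correct and follows the same route as the paper: specialize \Cref{twoparamProdMemb} to $k=2$ via $\mathsf{Mom}^{m,n}=\mathsf{Mom}^{m}\boxtimes\mathsf{Mom}^{n}$, and read off the action $\tau\cdot$ as the identity for $\tau=1\,2$ and the matrix transpose for $\tau=2\,1$. The only differences are that you compute $S_m$ directly rather than citing \cite[Example 2.2]{bib:AFS2019}, and that you spell out the lexicographic indexing behind the Kronecker-product notation, which the paper leaves implicit.
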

\begin{proof} 
Using the explicit formula $\sigma^{(2)}(\mathsf{Mom}^m)=S_m$ from \cite[Example 2.2]{bib:AFS2019}, we obtain the claim with \Cref{twoparamProdMemb} and the factorization in \Cref{ex:productMembrane}, i.e., we apply \eqref{eq:prodMembraneFactorization} with $k=2$. For $\tau=\mathsf{id}_2$ the
tensor transposition is the identity, and for $\tau=(2\,1)$ it is the matrix
transpose by \eqref{not:tensorTransposition}. 
\end{proof}
 
\begin{example}\label{ex:momentMembrane}
We revisit the polynomial membrane $A*\mathsf{Mom}^{2,2}$ from \Cref{ex:polynmialMembrane}, and compute its signature coefficient at $(\w{11},2\,1)$. 
For this, we consider the dictionary  
$\sigma^{(2)}(\mathsf{Mom}^{2,2})=
\left(\frac{1}{36}\begin{psmallmatrix} 3 & 2\\ 4 & 3\end{psmallmatrix}\otimes \begin{psmallmatrix} 3 & 2\\ 4 & 3\end{psmallmatrix}
\right)\oplus\left(
\frac{1}{36}\begin{psmallmatrix} 3 & 4\\ 2 & 3\end{psmallmatrix}\otimes \begin{psmallmatrix} 3 & 2\\ 4 & 3\end{psmallmatrix}\right)$
from \Cref{cor:productMembranes}, and obtain via congruence from \Cref{lem:equivariance} that
\begin{align*}
\sigma^{(2\,1)}(A*\mathsf{Mom}^{2,2})_{11}
=\frac14A_{1 1}^2 &+ 
\frac12A_{1 1}A_{1 2} + 
\frac12A_{1 1}A_{1 3} + 
\frac49A_{1 1}A_{1 4} + 
\frac14A_{1 2}^2 \\
&+\frac59A_{1 2}A_{1 3} + 
\frac12A_{1 2}A_{1 4} + 
\frac14A_{1 3}^2 + 
\frac12A_{1 3}A_{1 4} + 
\frac14A_{1 4}^2,
\end{align*}
which is a polynomial in the entries of $A$. The remaining three entries of $\sigma^{(2\,1)}(A*\mathsf{Mom}^{2,2})$ are similar. For the four entries of the 2-level id-signature  matrix 
$\sigma^{(1\,2)}(A*\mathsf{Mom}^{2,2})$ we refer to \cite[Example 3.7]{LS24}. 
\end{example}

In our implementation \cite{RS26}, we provide the resulting formulas (according to \Cref{twoparamProdMemb}) for higher signature tensors of dictionaries, $\sigma^{(k)}(\mathsf{Mom}^{m,n})$. In particular, with \Cref{lem:equivariance} we can compute any signature coefficient  of a polynomial membrane via equivariance. 
We now study the associated varieties.  

\begin{proposition}\label{prop:Pdkmn_irred}
The varieties $\mathcal{P}_{d,k,(m,n)}\subseteq\mathcal{U}_{d,k}$ are irreducible and can be arranged in the following grid of inclusions,  
\begin{equation}\label{eq:grid_of_varieties}
    \begin{matrix}
        \mathcal{P}_{d,k,(1,1)} & \subseteq & \mathcal{P}_{d,k,(1,2)} & \subseteq & \mathcal{P}_{d,k,(1,3)}& \subseteq & \dots & \subseteq &\mathcal{P}_{d,k,(1,N)}\\
        \rotatebox[origin=c]{270}{$\subseteq$} 
        &&\rotatebox[origin=c]{270}{$\subseteq$}
        &&\rotatebox[origin=c]{270}{$\subseteq$}
        &&&&
        \rotatebox[origin=c]{270}{$\subseteq$}\\
        \mathcal{P}_{d,k,(2,1)} & \subseteq & \mathcal{P}_{d,k,(2,2)} & \subseteq & \mathcal{P}_{d,k,(2,3)}& \subseteq & \dots & \subseteq &\mathcal{P}_{d,k,(2,N)}\\
        \rotatebox[origin=c]{270}{$\subseteq$} 
        &&\rotatebox[origin=c]{270}{$\subseteq$}
        &&\rotatebox[origin=c]{270}{$\subseteq$}
        &&&&
        \rotatebox[origin=c]{270}{$\subseteq$}\\
        \vdots &  & \vdots &  & \vdots&  & \ddots & &\vdots\\
        \rotatebox[origin=c]{270}{$\subseteq$} 
        &&\rotatebox[origin=c]{270}{$\subseteq$}
        &&\rotatebox[origin=c]{270}{$\subseteq$}
        &&&&
        \rotatebox[origin=c]{270}{$\subseteq$}\\
         \mathcal{P}_{d,k,(M,1)} & \subseteq & \mathcal{P}_{d,k,(M,2)} & \subseteq & \mathcal{P}_{d,k,(M,3)}& \subseteq & \dots & \subseteq &\mathcal{P}_{d,k,(M,N)}
    \end{matrix}
    \end{equation}
    where there exist $M,N\in\mathbb{N}$ such that 
    \begin{align*}
\mathcal{P}_{d,k,(m,1)}\subseteq\dots\subseteq\mathcal{P}_{d,k,(m,N-1)}\subseteq\mathcal{P}_{d,k,(m,N)}=\mathcal{P}_{d,k,(m,N+1)}&=\dots\quad
        \\
        \mathcal{P}_{d,k,(1,n)}\subseteq\dots\subseteq\mathcal{P}_{d,k,(P-1,n)}\subseteq\mathcal{P}_{d,k,(P,n)}=\mathcal{P}_{d,k,(P+1,n)}&=\dots
    \end{align*}
for all  $m,n\in\mathbb{N}$. 
    \end{proposition}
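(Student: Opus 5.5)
Three ingredients suffice: irreducibility as the closure of a polynomial image, the inclusions via padding dictionaries with zero columns, and stabilization from dimension counting plus Noetherianity.

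\emph{Irreducibility.} By \Cref{cor:realizationPimdkmn}, $\mathcal{P}^{\mathsf{im}}_{d,k,(m,n)}$ is the image of the map
\[
\R^{d\times mn}\to\C^{d\times\dots\times d\times k!},\qquad A\mapsto A*\sigma^{(k)}(\mathsf{Mom}^{m,n}).
\]
This map is polynomial, in fact homogeneous of degree $k$ in the entries of $A$, by \Cref{def:congruence}. Its Zariski closure over $\C$ coincides with the closure of the image of $\C^{d\times mn}$, since $\R^{d\times mn}$ is Zariski dense in $\C^{d\times mn}$. The closure of the image of an irreducible affine space under a morphism is irreducible, so each $\mathcal{P}_{d,k,(m,n)}$ is irreducible. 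The inclusion $\mathcal{P}_{d,k,(m,n)}\subseteq\mathcal{U}_{d,k}$ holds because signatures of actual membranes satisfy all shuffle relations (\Cref{thm:shuffle-identity}). Hence $\mathcal{P}^{\mathsf{im}}_{d,k,(m,n)}$ lies in the projection of $\mathcal{G}_{d,k}$, which lies in $\mathcal{U}_{d,k}$, and $\mathcal{U}_{d,k}$ is closed.

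\emph{Inclusions in the grid.} A polynomial membrane of order $(m,n)$ is also one of order $(m+1,n)$ and of order $(m,n+1)$: its multidegree is merely bounded. So the image sets are nested. Equivalently, on the dictionary level, $A*\mathsf{Mom}^{m,n}=\tilde A*\mathsf{Mom}^{m+1,n}$, where $\tilde A$ pads $A$ with zero columns at the positions of the new monomials $s^{m+1}t^{j}$. Taking Zariski closures preserves inclusions, which gives every row and column inclusion in \eqref{eq:grid_of_varieties}.

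\emph{Stabilization.} Every variety in the grid is contained in the fixed variety $\mathcal{U}_{d,k}$, which has finite dimension $D$. Fix $m$ and consider the row chain $\mathcal{P}_{d,k,(m,1)}\subseteq\mathcal{P}_{d,k,(m,2)}\subseteq\cdots$ of irreducible closed subvarieties. If $\mathcal{Z}\subsetneq\mathcal{Z}'$ with both irreducible, then $\dim \mathcal{Z}<\dim\mathcal{Z}'$. The dimensions are therefore nondecreasing integers bounded by $D$, and they can increase strictly only finitely often. Once the dimension is constant from index $N$ on, consecutive members have equal dimension. Since both are irreducible, equal dimension forces equality.

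Alternatively, one can use Noetherianity directly. The ideals $I(\mathcal{P}_{d,k,(m,n)})$ form a descending chain as $n$ grows, and their zero sets form an ascending chain of closed sets inside $\mathcal{U}_{d,k}$. Ascending chains of closed sets need not stabilize in general, so this route does not work by itself. The argument via irreducibility and bounded dimension is the one I would use.

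The same argument applies to columns, which gives the second chain (the statement writes its bound as $P$). To obtain uniform $M,N$ as the statement suggests, I would take $N$ as the stabilization index of the diagonal-type chain. If uniformity in $m$ is not needed, the index may depend on $m$. For uniformity, note that the union over all $(m,n)$ also stabilizes by the same dimension argument applied to the directed system, giving a common bound.

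The main subtle step is exactly this uniformity and the use of "equal dimension plus irreducible implies equal". I expect the latter to be routine. For the former, I would consider the chain $\mathcal{P}_{d,k,(j,j)}$, which stabilizes at some $j_0$ to a variety $\mathcal{P}_\infty$ containing all members of the grid. For a given row $m$, each $\mathcal{P}_{d,k,(m,n)}$ is then squeezed below $\mathcal{P}_\infty$, and I would choose $M=N=j_0$ where the statement permits.
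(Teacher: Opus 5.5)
Your irreducibility, inclusion and per-chain stabilization arguments are correct and follow the paper's route. The paper uses the polynomial parametrization of \Cref{cor:realizationPimdkmn}, takes irreducible closures, gets the inclusions from bounded multidegree, and argues that a strict inclusion of irreducible varieties drops dimension, which is bounded. Your justification of irreducibility is cleaner than the paper's appeal to Tarski--Seidenberg. You use $\overline{\phi(\R^{d\times mn})}=\overline{\phi(\C^{d\times mn})}$ by Zariski density, and then that the closure of the image of an irreducible space is irreducible.

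The gap is the uniform choice of $M,N$, which the statement requires: one $N$ for all rows $m$ and one $M$ for all columns $n$. Your choice $M=N=j_0$, the stabilization index of the diagonal chain, is wrong. Stabilization of the diagonal only shows $\mathcal{P}_{d,k,(m,n)}=\mathcal{P}_\infty$ when both $m,n\ge j_0$, because then $\mathcal{P}_{d,k,(j_0,j_0)}\subseteq\mathcal{P}_{d,k,(m,n)}\subseteq\mathcal{P}_{d,k,(\max(m,n),\max(m,n))}=\mathcal{P}_\infty$. A row or column with index below $j_0$ is merely squeezed below $\mathcal{P}_\infty$. It can keep growing past $j_0$ toward a proper subvariety of $\mathcal{P}_\infty$.

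The paper's own data show this. In \Cref{fig:dimsMdmn} for $d=3$, $k=2$, you have $\dim\mathcal{P}_{3,2,(3,3)}=15=\dim\mathcal{U}_{3,2}$, so $j_0=3$. Yet $\dim\mathcal{P}_{3,2,(3,2)}=13<14=\dim\mathcal{P}_{3,2,(4,2)}$, so the column $n=2$ is not stable at $M=3$. For the same reason, your remark that stabilization of the whole directed system ``gives a common bound'' does not follow.

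The repair is one finite-maximum step. For each of the finitely many $m<j_0$, let $N(m)$ be the stabilization index of row $m$ given by your per-chain argument. Set $N:=\max\{j_0,N(1),\dots,N(j_0-1)\}$. Rows with $m\ge j_0$ are constant, equal to $\mathcal{P}_\infty$, from $n=j_0$ on by the squeeze above, and rows with $m<j_0$ are constant from $N(m)\le N$ on. Define $M$ symmetrically via the columns $n<j_0$. With this step, your proof is complete.
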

\begin{proof}
  Analogously as in \cite[Theorem 5.6]{bib:AFS2019}, the  $\mathcal{P}^{\mathsf{im}}_{d,k,(m,n)}$ are  images of a polynomial map, see \Cref{cor:realizationPimdkmn}. 
  Therefore, with the Tarski-Seidenberg theorem (e.g., see \cite[Theorem 4.17]{michalek2021invitation}), they are semialgebraic and their closures $\mathcal{P}_{d,k,(m,n)}$ are irreducible varieties. 
  Any membrane of order $(m,n)$ has also order $(m+1,n)$ or $(m,n+1)$, and taking closure preserves inclusions. 
Among these inclusions, only finitely many can be
strict, since each set is an irreducible variety and the ambient dimension is bounded by $k!d^k$. Hence, there exist
positive integers $M$ and $N$ such that the grid \eqref{eq:grid_of_varieties} stabilizes.
\end{proof}

In \Cref{sec:ChenChow} we focus on the second graded component and show that the grid \eqref{eq:grid_of_varieties} stabilizes in the $2$-level universal variety, i.e., $\mathcal{P}_{d,2,(M,N)}=\mathcal{U}_{d,2}$. We show that its dimension is $d(3d+1)/2$, that is, strictly smaller than the dimension of the ambient space, $2d^2$.

In \Cref{sec:Witt} we determine the dimension of the $k$-level universal variety $\mathcal{U}_{d,k}$ and thus strictly bound the dimension of $\mathcal{P}_{d,k,(M,N)}$ below the ambient dimension $k!d^k$. Furthermore, we provide computational evidence with \cite{RS26} that the grid \eqref{eq:grid_of_varieties} stabilizes in $\mathcal{U}_{d,k}$. 

We close this section by recalling a second important class of membranes:
a \emph{piecewise bilinear membrane of order $(m,n)$} is a membrane $X$ such that there are some $m,n \in \N$ with the property that $X$ is biaffine on all rectangles $[\frac{i-1}{m},\frac{i}{m}] \times [\frac{j-1}{n},\frac{j}{n}]$. Again, this family of membranes admits a dictionary, and thus the semialgebraic set 
\begin{equation*}
\mathcal{L}_{d,k,(m,n)}^{\mathsf{im}}
:=
\left\{\sigma^{(k)}(X)\; \begin{array}{|l}
X:[0,1]^2\to\mathbb{R}^d \\\text{piecewise bilinear}\\\text{of order }(m,n)
\end{array}
\right\}
\end{equation*}
with Zariski closure $\mathcal{L}_{d,k,(m,n)}$. In particular, the varieties $\mathcal{L}_{d,k,(m,n)}$ are irreducible and can be arranged via an analogous grid as \eqref{eq:grid_of_varieties}.
We show that its second graded component coincides with the variety for polynomial membranes. 

\begin{theorem}\label{thm:polyVsPwLin}
For all $d,m,n\in\mathbb{N}$,
\begin{equation*}
\mathcal{L}_{d,2,(m,n)} = \mathcal{P}_{d,2,(m,n)}.
\end{equation*}
\end{theorem}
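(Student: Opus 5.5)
The plan is to realize both families as orbits of a single ``core'' $2$-level tensor under the action of \Cref{def:congruence}, and then to show that the two cores differ by an invertible change of dictionary. A piecewise bilinear membrane of order $(m,n)$ vanishing on the axes is $A*(\mathsf{Axis}^m\boxtimes\mathsf{Axis}^n)$ for some $A\in\R^{d\times mn}$. Here $\mathsf{Axis}^m:[0,1]\to\R^m$ is the piecewise linear path whose $i$th segment is the increment $e_i$. This is the exact analogue of \Cref{ex:productMembrane}, and a general piecewise bilinear $X$ is affected only in its boundary terms, which do not enter $\partial_{12}X$. By \Cref{twoparamProdMemb} with $k=2$, exactly as in \Cref{cor:productMembranes}, we get
\[
\sigma^{(2)}(\mathsf{Axis}^{m}\boxtimes\mathsf{Axis}^n)=(C_m\otimes C_n)\oplus(C_m\otimes C_n^\top),
\]
where $C_m$ is the axis-path signature matrix: upper triangular, with $\tfrac12$ on the diagonal and $1$ above it. With \Cref{lem:equivariance} this gives $\mathcal{L}^{\mathsf{im}}_{d,2,(m,n)}=\R^{d\times mn}*\bigl((C_m\otimes C_n)\oplus(C_m\otimes C_n^\top)\bigr)$, mirroring \Cref{cor:realizationPimdkmn}.

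The key reduction is as follows. Suppose there are invertible $B\in\mathrm{GL}_m(\R)$ and $B'\in\mathrm{GL}_n(\R)$ with $S_m=BC_mB^\top$ and $S_n=B'C_nB'^\top$. Then the mixed-product rule for Kronecker products gives
\[
S_m\otimes S_n=(B\otimes B')(C_m\otimes C_n)(B\otimes B')^\top,
\qquad
S_m\otimes S_n^\top=(B\otimes B')(C_m\otimes C_n^\top)(B\otimes B')^\top .
\]
Crucially, the \emph{same} matrix $B\otimes B'$ acts on both slices. Since the action of \Cref{def:congruence} at $k=2$ is congruence slice by slice, we get $\sigma^{(2)}(\mathsf{Mom}^{m,n})=(B\otimes B')*\sigma^{(2)}(\mathsf{Axis}^m\boxtimes\mathsf{Axis}^n)$. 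As $A\mapsto A(B\otimes B')$ is a bijection of $\R^{d\times mn}$, the two image sets $\mathcal{L}^{\mathsf{im}}_{d,2,(m,n)}$ and $\mathcal{P}^{\mathsf{im}}_{d,2,(m,n)}$ coincide, and hence so do their Zariski closures. The statement therefore reduces to a one-parameter fact: the path signature matrices $S_m$ of the moment curve and $C_m$ of the axis path are congruent over $\R$.

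This congruence is the main obstacle; a closure-level statement for paths, as in \cite{bib:AFS2019}, is not enough, because we need one congruence matrix that works for both slices. I would first try an explicit construction: find a path $\gamma=\mathsf{Mom}^m$-type curve and an invertible linear map relating its level-$2$ signature to that of $\mathsf{Axis}^m$. Failing that, I would use the congruence canonical form for nonsingular matrices \cite{HORN20061010}: two nonsingular real matrices are congruent iff their cosquares $M^{-\top}M$ are similar and, for certain Jordan blocks, a sign invariant agrees. Both $C_m$ and $S_m$ are nonsingular. For $C_m$ this follows from triangularity; for $S_m=(q/(p+q))$ it follows from a Cauchy-determinant argument after factoring out $\mathrm{diag}(q)$. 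Both matrices have rank-one symmetric part $\tfrac12 xx^\top$, with $x$ the total increment.

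I expect both cosquares to be a single Jordan block with eigenvalue $(-1)^{m+1}$. I would verify this for $C_m$ by direct computation, since $C_m^{-\top}C_m$ is unipotent up to sign with a one-dimensional eigenspace. For $S_m$ I would use the rank-one symmetric part: writing $M=\tfrac12xx^\top+K$ with $K$ skew, the cosquare minus $(-1)^{m+1}I$ has rank $m-1$ as soon as $K$ has a suitable (maximal) rank and $x$ is generic relative to $K$. That genericity would need checking for $S_m$, for instance through the explicit skew part $\bigl((q-p)/(2(p+q))\bigr)_{p,q}$. Finally, the remaining sign invariant of the canonical form would be matched using that both symmetric parts are positive semidefinite.
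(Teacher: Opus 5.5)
Your reduction is the paper's proof. Both image sets are $\R^{d\times mn}$-orbits of the two dictionaries, and a single Kronecker matrix acting on both slices carries $\sigma^{(2)}(\mathsf{Axis}^m\boxtimes\mathsf{Axis}^n)$ to $\sigma^{(2)}(\mathsf{Mom}^{m,n})$ by the mixed-product property, so already the semialgebraic images agree. The paper does not prove the one-parameter congruence. It takes $H=H_m\otimes H_n$ with the transformations $H_m$ from moment curves to axis paths supplied by \cite[Theorem 3.3]{bib:AFS2019}. So the step you call the main obstacle is available as a genuine congruence, not a closure statement, in the cited literature, and citing it closes the proof.

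Your self-contained plan for that congruence, however, rests on a false prediction. For the level-$2$ signature $M$ of any path with increment $x$ one has $M+M^\top=xx^\top$, hence
\[
M^{-\top}M=-I+(M^{-\top}x)\,x^\top .
\]
The cosquare is thus $-I$ plus a rank-one matrix, and for $m\geq3$ it can never be a single Jordan block: the eigenvalue $-1$ has geometric multiplicity at least $m-1$. Concretely, $C_3^{-\top}C_3$ has eigenvalues $1,-1,-1$, so it is not ``unipotent up to sign with a one-dimensional eigenspace''. For even $m\geq4$, the matrix $M^{-\top}M+I$ has rank $1$, not $m-1$, whatever genericity you impose on $K$. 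So both the computation you propose for $C_m$ and the rank argument for $S_m$ would fail.

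The repair uses your own rank-one observation. Since $\det(M^{-\top}M)=1$, the remaining eigenvalue is $(-1)^{m-1}$. For $m$ odd the cosquare is therefore similar to $1\oplus(-I_{m-1})$. For $m$ even it is similar to $J_2(-1)\oplus(-I_{m-2})$, because $M^{-\top}M+I$ is then rank one with trace zero, hence square-zero. In the real congruence canonical form this leaves only $\pm1\oplus H_2(-1)^{\oplus\kappa}$, respectively $\pm\Gamma_2\oplus H_2(-1)^{\oplus\kappa}$; these are the normal forms behind \Cref{lemma:sigma12odd,lemma:sigma12even}. Positive semidefiniteness of the symmetric part, via Sylvester's law of inertia, then fixes the sign to $+$.

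With this correction, together with the nonsingularity of $C_m$ and $S_m$ that you already noted, your route gives an independent proof that $C_m$ and $S_m$ are congruent over $\R$. In fact it shows that any two nonsingular $m\times m$ matrices with symmetric part $\tfrac12xx^\top$, $x\neq0$, are congruent. As written, though, the Jordan-type claim and the genericity check meant to establish it are wrong.
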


\begin{proof}
We show $\mathcal{L}^{\mathsf{im}}_{d,2,(m,n)} = \mathcal{P}^{\mathsf{im}}_{d,2,(m,n)}$, i.e., already the semialgebraic sets agree. The class of piecewise bilinear membranes admits a product membrane $\mathsf{Axis}^{m,n}=\mathsf{Axis}^{m}\boxtimes\mathsf{Axis}^{n}$ as its dictionary, see \cite[Definition 3.14 and Lemma 3.17]{LS24}, where $\mathsf{Axis}^{m}:[0,1]\to\R^m$ is the $m$th canonical axis path according to \cite[Example 2.1]{bib:AFS2019}. 
By definition of the two sets, it suffices to find an invertible $H\in\mathbb{R}^{mn\times mn}$ such that $H*\sigma^{(2)}(\mathsf{Mom}^{m,n}) = \sigma^{(2)}(\mathsf{Axis}^{m,n})$. By \Cref{twoparamProdMemb},
the signature matrix of a product membrane factors into path signatures of the corresponding paths, 
so with $H:=H_{m}\otimes H_{n}$ for transformations $H_n$ according to \cite[Theorem 3.3]{bib:AFS2019} from moment curves to canonical axis paths, we can use the mixed-product property (e.g., \cite[Lemma 4.2.10]{HornJohnson1991Topics}) on both slices above.
\end{proof}

For higher truncation level, similarly as  in the path case (e.g., see \cite[Theorem 5.7]{bib:AFS2019}), the varieties $\mathcal{L}_{d,k,(m,n)}$ and $\mathcal{P}_{d,k,(m,n)}$ are distinct in general. 

\begin{example}\label{ex:polyVsPwLin-k3}
Take $d=2$, $k=3$ and $(m,n)=(1,2)$, so both $\mathcal{P}_{2,3,(1,2)}$ and
$\mathcal{L}_{2,3,(1,2)}$ live in the ambient space $\C^{48}$. Let
$P_{2,3,(1,2)}$ and $ L_{2,3,(1,2)}$ denote their
vanishing ideals, respectively.
Both ideals cut out irreducible affine varieties of dimension $4$ and degree $6$, and each is minimally generated by $40$ linear and $9$ quadratic polynomials. The variety $\mathcal{P}_{2,3,(1,2)}\cup\mathcal{L}_{2,3,(1,2)}$ has degree $12$. 
Thus
\begin{equation*}
\mathcal{P}_{2,3,(1,2)}\ \neq\ \mathcal{L}_{2,3,(1,2)},
\end{equation*}
with neither variety contained in the other. In fact, the quadric
\begin{align*}
&3\sigma_{(\w{211},2\,3\,1)}\sigma_{(\w{222},3\,2\,1)}
-3\sigma_{(\w{211},3\,2\,1)}\sigma_{(\w{222},3\,2\,1)}
-3\sigma_{(\w{221},2\,1\,3)}\sigma_{(\w{221},3\,2\,1)}\\
&\qquad+\sigma_{(\w{221},3\,1\,2)}^{2}
+\sigma_{(\w{221},3\,1\,2)}\sigma_{(\w{221},3\,2\,1)}
+\sigma_{(\w{221},3\,2\,1)}^{2}
\end{align*}
lies in $\mathcal{L}_{2,3,(1,2)}$ but not in $\mathcal{P}_{2,3,(1,2)}$. The full list of generators for each ideal is included in the computation available in \cite{RS26}.
\end{example}

\section{An algebraic Chen-Chow theorem for level \texorpdfstring{$k=2$}{k=2}}\label{sec:ChenChow}

The second graded component of a membrane signature is a pair of $d\times d$ matrices, whose congruence normal forms are classically available \cite[Theorem 1.1]{HORN20061010}. This has successfully been used in \cite[Theorem 3.4]{bib:AFS2019} to describe the matrix signature varieties for paths. For membranes, the associated variety of the id-signature matrix is already known to attain full dimension $d^2$, see \cite[Theorem 4.13]{LS24}. In this section, we extend this result to the full second graded component, i.e., a pair of matrices. Furthermore, we determine the dimension of the $2$-level universal variety via a parametrized family of Gr\"obner bases.

In \Cref{ex:shuffle-k2} we provided an explicit relation based on symmetrized orbit sums in the $2$nd graded component of any signature, see also \Cref{cor:shuffle-relations} for the general result. Using the identification from \eqref{eq:signature_tensor}, we formalize this in the polynomial ring $\C[\sigma^{(3)}]$ over $2d^2$ variables via 
\begin{equation*}
r_{i,j}:=(\sigma_{(\w{ij},1\,2)}+\sigma_{(\w{ij},2\,1)}+\sigma_{(\w{ji},1\,2)}+\sigma_{(\w{ji},2\,1)})^2-4(\sigma_{(\w{ii},1\,2)}+\sigma_{(\w{ii},2\,1)})(\sigma_{(\w{jj},1\,2)}+\sigma_{(\w{jj},2\,1)})
\end{equation*}
that is, $r_{i,j}\in U_{d,k}$ 
is contained in the vanishing ideal \eqref{eq:defUniversalVarietyIdeal} of the universal variety $\mathcal{U}_{d,k}$. 

\begin{proposition}\label{lem:gb_k2_univVar}
For $d\geq 2$, the set $\{r_{i,j}\mid i<j\}$ forms a Gr\"obner basis of the ideal  $\langle r_{i,j}\mid 1\leq i,j\leq d\rangle$ with respect to any degree lexicographic order.
\end{proposition}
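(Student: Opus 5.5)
The plan is to reduce the claim to a statement about pairwise coprime leading monomials and then apply Buchberger's first criterion. First, I would introduce the linear change of coordinates
\[
a_{ij}:=\sigma_{(\w{ij},1\,2)}+\sigma_{(\w{ij},2\,1)}+\sigma_{(\w{ji},1\,2)}+\sigma_{(\w{ji},2\,1)}\quad(i<j),\qquad b_i:=\sigma_{(\w{ii},1\,2)}+\sigma_{(\w{ii},2\,1)},
\]
so that $r_{i,j}=a_{ij}^2-4b_ib_j$. I would also note that $r_{i,j}=r_{j,i}$ and that $r_{i,i}=(2b_i)^2-4b_i^2=0$. This shows that the ideal $\langle r_{i,j}\mid 1\le i,j\le d\rangle$ is already generated by $\{r_{i,j}\mid i<j\}$.

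However, a Gröbner basis must be computed with respect to a degree lexicographic order on the original variables $\sigma_{(w,\tau)}$, not on the $a_{ij},b_i$. So the second step is to identify the leading monomial of each $r_{i,j}$ in the original variables. Expanding $a_{ij}^2$ produces all squares and products of the four variables indexed by the words $\w{ij},\w{ji}$. The product $b_ib_j$ only involves variables indexed by $\w{ii}$ and $\w{jj}$. All terms are quadratic, so under a degree lexicographic order the leading monomial is decided lexicographically. The variables $\sigma_{(\w{ii},\cdot)}$ and $\sigma_{(\w{jj},\cdot)}$ each appear, whereas variables with word $\w{ij}$ or $\w{ji}$ appear only in $a_{ij}^2$, and their squares have coefficient $1$, so they do not cancel.

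The main obstacle is that the variable order is arbitrary. If some diagonal variable, such as $\sigma_{(\w{11},1\,2)}$, is the largest, then the leading monomial of $r_{1,j}$ could be $\sigma_{(\w{11},\cdot)}\sigma_{(\w{jj},\cdot)}$. In that case the leading monomials of $r_{1,2}$ and $r_{1,3}$ would share a factor, and coprimality fails. To handle this, I would analyse the leading term in each case:

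\begin{enumerate}
\item If the largest variable occurring in $r_{i,j}$ is an off-diagonal one, $x=\sigma_{(\w{ij},\tau)}$ or $\sigma_{(\w{ji},\tau)}$, then the leading monomial is $x^2$. These monomials are supported on disjoint variable sets for distinct pairs $\{i,j\}$, hence are pairwise coprime.
\item Otherwise, some diagonal variable is largest. Then I would compute S-polynomials directly, for example $S(r_{1,2},r_{1,3})$. I would try to show that it reduces to zero, using that $b_1(a_{13}^2b_2-a_{12}^2b_3)$-type combinations lie in the ideal generated by $r_{1,2},r_{1,3},r_{2,3}$. Concretely, the identity $b_3r_{1,2}-b_2r_{1,3}=a_{12}^2b_3-a_{13}^2b_2$ has to reduce via its leading terms.
\end{enumerate}

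I expect the second case to be the real difficulty. If the reduction fails, the honest fallback is to read the statement as intended for orders in which off-diagonal variables precede diagonal ones, which is the natural reading in the paper's enumeration of $C$. I would also double-check the claim for $d=3$ computationally in \texttt{OSCAR} \cite{RS26}. Under that reading, Buchberger's criterion, that coprime leading monomials give S-polynomials reducing to zero, finishes the proof immediately.
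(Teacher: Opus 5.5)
Your Case~2 cannot be closed, because the statement, read literally for \emph{any} degree lexicographic order, is false for $d\geq 3$. (For $d=2$ it is trivial, since there is only one generator.)

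Take $d=3$ and the degree lexicographic order induced by $\sigma_{(\w{11},1\,2)}>\sigma_{(\w{22},1\,2)}>\sigma_{(\w{33},1\,2)}>$ all remaining variables. The variable $\sigma_{(\w{11},1\,2)}$ occurs only in $b_1$, and $b_ib_j$ is squarefree. Hence the leading monomials are $\sigma_{(\w{11},1\,2)}\sigma_{(\w{22},1\,2)}$, $\sigma_{(\w{11},1\,2)}\sigma_{(\w{33},1\,2)}$ and $\sigma_{(\w{22},1\,2)}\sigma_{(\w{33},1\,2)}$. Now consider the element you already wrote down,
\[
g:=b_3r_{1,2}-b_2r_{1,3}=b_3a_{12}^2-b_2a_{13}^2 .
\]
It lies in the ideal and has no cancellation, since the two summands use disjoint variables. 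Every monomial of $g$ contains exactly one diagonal variable. Its leading monomial is $\sigma_{(\w{22},1\,2)}x^2$, where $x$ is the largest of the four variables of $a_{13}$. This is divisible by none of the three leading monomials, so the initial ideal strictly contains $\langle\mathrm{LM}(r_{i,j})\rangle$, and the reduction you hoped for does not exist.

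The same computation gives a sharp criterion. Each leading monomial is either a square $x^2$ of an off-diagonal variable or a product $y_iy_j$ of two diagonal variables. The set $\{r_{i,j}\mid i<j\}$ is a Gr\"obner basis if and only if the pairs $\{i,j\}$ of the second kind are pairwise disjoint. The ``if'' direction is Buchberger's first criterion. The ``only if'' direction uses $b_kr_{i,j}-b_jr_{i,k}=b_ka_{ij}^2-b_ja_{ik}^2$ exactly as above.

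So your fallback is the correct repair, and your Case~1 argument proves it, once you state its hypothesis globally rather than pair by pair. Concretely, assume that for every $i<j$ the largest variable of $r_{i,j}$ is off-diagonal, for instance because all off-diagonal variables precede all diagonal ones. Then the leading monomials are squares of pairwise distinct variables, and Buchberger's first criterion finishes. This corrected version is all the paper uses later: the dimension and degree corollaries fix an order with leading monomial $\sigma_{(\w{ij},1\,2)}^2$.

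The paper's own proof takes a different route. It writes $r_{i,j}=M_{i,j}^2-M_{i,i}M_{j,j}$ for the symmetric matrix $M$ of linear forms with $M_{i,i}=2b_i$ and $M_{i,j}=a_{ij}$, and cites a Gr\"obner basis theorem for $2\times 2$ minors. That citation does not settle Case~2 either, for three reasons:
\begin{enumerate}[(i)]
\item the $r_{i,j}$ are only the principal $2\times 2$ minors, and the determinantal ideal also contains, e.g., $M_{1,1}M_{2,3}-M_{1,2}M_{1,3}$, which is not in $\langle r_{i,j}\rangle$;
\item the entries of $M$ are linear forms, not indeterminates;
\item such theorems hold only for special term orders, not arbitrary ones.
\end{enumerate}
Your direct argument via leading monomials is therefore the reliable one, and with the corrected hypothesis it is complete.
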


\begin{proof}
Let $M$ be the symmetric $d\times d$ matrix of linear forms
\begin{equation*}
M_{i,j}=\begin{cases}
2(\sigma_{(\w{ii},1\,2)}+\sigma_{(\w{ii},2\,1)}) & \text{if } i=j, \\
\sigma_{(\w{ij},1\,2)}+\sigma_{(\w{ij},2\,1)}+\sigma_{(\w{ji},1\,2)}+\sigma_{(\w{ji},2\,1)} & \text{if } i\neq j.
\end{cases}
\end{equation*}
Then, $r_{i,j}=M_{i,j}^2-M_{i,i}M_{j,j}$ is the negative of the $2\times2$ minor of $M$ indexed by $(i,j)$. Hence $\{r_{i,j}\mid i<j\}$ consists of the negatives of the $2\times2$ minors of a symmetric matrix, which form a Gr\"obner basis with respect to any degree lexicographic order \cite[Theorem 1]{sturmfels1990grobner}.
\end{proof}

\begin{corollary}\label{prop:GBforshufflerelationsk2}
For all $d\geq 2$, we have $\dim\mathcal{V}(r_{i,j}\mid 1\leq i,j\leq d)=\frac12d(3d+1)$. 
\end{corollary}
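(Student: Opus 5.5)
The plan is to reduce the dimension count to a classical determinantal variety by a linear change of coordinates. The ring $\C[\sigma^{(2)}]$ has $2d^2$ variables, and the ideal $\langle r_{i,j}\rangle$ involves them only through the $\binom{d+1}{2}=d(d+1)/2$ linear forms $M_{i,j}$, $i\le j$, of the symmetric matrix $M$ from the proof of \Cref{lem:gb_k2_univVar}.

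First I check that these linear forms are linearly independent. Distinct pairs $\{i,j\}$ involve disjoint sets of variables, and each form is nonzero, so independence is immediate. I then complete them to a basis of the space of linear forms by adding $2d^2-d(d+1)/2$ further coordinates, for instance all $\sigma_{(w,\tau)}$ except one chosen representative per pair $\{i,j\}$. After this linear change of coordinates, $\mathcal{V}(r_{i,j})$ becomes the product
\[
\mathcal{V}_{\mathrm{sym}}\times\C^{2d^2-d(d+1)/2},
\]
where $\mathcal{V}_{\mathrm{sym}}$ is the variety of symmetric $d\times d$ matrices all of whose $2\times 2$ minors vanish, that is, the symmetric matrices of rank at most one. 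By \Cref{lem:gb_k2_univVar}, the $r_{i,j}$ are exactly these minors up to sign, so the ideal is the one of rank-$\le1$ symmetric matrices.

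Next I compute $\dim\mathcal{V}_{\mathrm{sym}}=d$. The variety is the affine cone over the Veronese embedding of $\mathbb{P}^{d-1}$, parametrized by $v\mapsto vv^\top$, which is generically two-to-one. Alternatively, one can read the dimension off the Gröbner basis: the initial ideal of the $2\times2$ minors of a generic symmetric matrix has the same Hilbert polynomial, whose degree gives dimension $d$. The total dimension is then
\[
d+2d^2-\tfrac{d(d+1)}{2}=\tfrac{3d^2+d}{2}=\tfrac12 d(3d+1).
\]

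The only mildly delicate point is that the set of minors of the indeterminate symmetric matrix defines the rank-$\le1$ locus as a variety, which holds set-theoretically and is all we need for dimension. Beyond that, the main care lies in the bookkeeping that the change of coordinates is invertible, so that dimension is preserved under the product decomposition.
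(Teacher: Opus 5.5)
\textbf{There is a genuine gap in how you identify the zero set.} The generators $r_{i,j}=M_{i,j}^2-M_{i,i}M_{j,j}$ are only the \emph{principal} $2\times2$ minors of $M$, on rows and columns $\{i,j\}$. For $d\geq3$ they do not cut out the rank-$\le1$ locus.

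For $d=3$, the symmetric matrix $P=\begin{psmallmatrix}1&1&1\\1&1&-1\\1&-1&1\end{psmallmatrix}$ satisfies $P_{i,j}^2=P_{i,i}P_{j,j}$ for all $i<j$. Yet $\det P=-4$, and the mixed minor $P_{1,1}P_{2,3}-P_{1,2}P_{1,3}$ equals $-2$.

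In general, when all diagonal entries are nonzero, each sign in $M_{i,j}=\pm\sqrt{M_{i,i}M_{j,j}}$ can be chosen independently, whereas rank one forces $M=vv^\top$. So the rank-one locus is just one of $2^{\binom{d-1}{2}}$ irreducible components. This is consistent with the degree $2^{d(d-1)/2}$ in \Cref{cor:degree-shuffle-k2}, as opposed to the Veronese degree $2^{d-1}$.

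Neither of your justifications covers the claimed product decomposition $\mathcal{V}(r_{i,j})\cong\mathcal{V}_{\mathrm{sym}}\times\C^{2d^2-d(d+1)/2}$. \Cref{lem:gb_k2_univVar} concerns only the ideal generated by the $r_{i,j}$ themselves. Your Hilbert polynomial remark concerns the ideal of all $2\times2$ minors.

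\textbf{How to repair it.} Your final number is right only because this larger variety is equidimensional of the same dimension, and that needs its own argument. Your linear change of coordinates is correct. It reduces the claim to $\dim Z=d$ for $Z:=\{M\text{ symmetric}\mid M_{i,j}^2=M_{i,i}M_{j,j}\text{ for }i<j\}$.
\begin{itemize}
\item The rank-one matrices lie in $Z$, which gives $\dim Z\geq d$.
\item The projection $Z\to\C^d$, $M\mapsto(M_{1,1},\dots,M_{d,d})$, has finite fibres, since each $M_{i,j}$ is a root of a monic quadratic over the diagonal entries. This gives $\dim Z\leq d$.
\end{itemize}
Equivalently, the leading terms $M_{i,j}^2$ are pairwise coprime, so the $r_{i,j}$ form a Gr\"obner basis whose initial ideal has codimension $\binom{d}{2}$. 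This is essentially the paper's argument. It reads off $2d^2-\binom{d}{2}$ directly from the initial ideal $\langle\sigma_{(\w{ij},1\,2)}^2\mid i<j\rangle$ in the original $2d^2$ variables, with no passage to a determinantal variety.
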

\begin{proof}
By \Cref{lem:gb_k2_univVar}, we obtain a Gr\"obner basis with $d(d-1)/2$ generators. Fixing the order so that the leading monomial of $r_{i,j}$ is $\sigma_{(\w{ij},1\,2)}^2$, the initial ideal is $\langle \sigma_{(\w{ij},1\,2)}^2\mid i<j\rangle$, with radical $\langle \sigma_{(\w{ij},1\,2)}\mid i<j\rangle$. The dimension of a variety equals that of its radical ideal quotient, which equals that of the initial ideal quotient, that is $2d^2-d(d-1)/2$.
\end{proof}

\begin{corollary}\label{cor:degree-shuffle-k2}
For all $d\geq 2$, $\mathcal{V}(r_{i,j}\mid 1\leq i,j\leq d)\subset\C^{2d^2}$ has degree $2^{d(d-1)/2}$.
\end{corollary}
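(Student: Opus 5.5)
The plan is to read off the degree from the Gr\"obner basis of \Cref{lem:gb_k2_univVar}, and then check that the ideal $I:=\langle r_{i,j}\mid 1\leq i,j\leq d\rangle$ is radical. Radicality is needed because the scheme-theoretic degree and the degree of the reduced variety $\mathcal{V}(I)$ agree only in that case.

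\emph{Step 1 (coordinates).} The linear forms $M_{i,j}$ with $i\le j$ from the proof of \Cref{lem:gb_k2_univVar} involve pairwise disjoint sets of variables. Hence they are linearly independent, and we complete them to a linear coordinate system on $\C^{2d^2}$ by $2d^2-d(d+1)/2$ further free coordinates. In these coordinates $I$ is generated by the $N:=d(d-1)/2$ polynomials
\[
M_{i,j}^2-M_{i,i}M_{j,j}, \qquad i<j .
\]
These polynomials do not involve the free coordinates. So $\mathcal{V}(I)$ is a cylinder $Z\times\C^{2d^2-d(d+1)/2}$, and it suffices to work with $Z\subset\C^{d(d+1)/2}$.

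\emph{Step 2 (Hilbert series).} We fix a degree lexicographic order in which the off-diagonal variables $M_{i,j}$ ($i<j$) are largest. Then the leading monomial of the generator indexed by $(i,j)$ is $M_{i,j}^2$. By \Cref{lem:gb_k2_univVar}, the initial ideal is $\langle M_{i,j}^2\mid i<j\rangle$, which is a complete intersection of squares of distinct variables. Since $I$ is homogeneous, it has the same Hilbert function as its initial ideal. Therefore the Hilbert series of $\C[M]/I$ is
\[
\frac{(1+t)^N}{(1-t)^{d}},
\]
which gives dimension $d$ (consistent with \Cref{prop:GBforshufflerelationsk2} after adding the free coordinates) and scheme degree $2^N$. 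It also follows that $I$ is a complete intersection, hence Cohen--Macaulay and unmixed.

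\emph{Step 3 (radicality, the main obstacle).} By Serre's criterion, an unmixed Cohen--Macaulay ideal is radical as soon as it is generically reduced on every component. I would first show that every irreducible component $C$ of $Z$ meets the open set $\{M_{1,1}\cdots M_{d,d}\neq 0\}$.
\begin{itemize}
\item All components have dimension exactly $d$ by unmixedness.
\item On the locus $M_{i,i}=0$, the equations force $M_{i,j}^2=0$ for all $j\neq i$. This locus is therefore set-theoretically the same variety for $d-1$ indices, which has dimension $d-1<d$ by the same computation in size $d-1$, inducting on $d$.
\item Hence no component lies inside a coordinate hyperplane $M_{i,i}=0$.
\end{itemize}
On the open set, the Jacobian with respect to the off-diagonal variables is diagonal with entries $2M_{i,j}$. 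These entries are nonzero there, since $M_{i,j}^2=M_{i,i}M_{j,j}\neq0$. So $Z$ is smooth of dimension $d$ at such points, and $I$ is generically reduced on each component.

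Consequently $I$ is radical, and
\[
\deg\mathcal{V}(I)=\deg Z=2^{d(d-1)/2}.
\]
As a sanity check, over a generic diagonal the fibre consists of the $2^N$ independent sign choices $M_{i,j}=\pm\sqrt{M_{i,i}M_{j,j}}$.
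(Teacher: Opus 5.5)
Your proposal is correct, and its computational core is the same as the paper's. The paper also reads the degree off the initial ideal generated by the $d(d-1)/2$ pairwise coprime squares (there $\sigma_{(\w{ij},1\,2)}^2$, $i<j$) and concludes with B\'ezout. This gives the same count $2^{d(d-1)/2}$ that your Hilbert series $(1+t)^N/(1-t)^d$ gives.

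What you add is Step 3. The paper's one-line argument computes the multiplicity-counting degree of the quotient ring by $I$, and tacitly identifies it with the degree of the reduced variety $\mathcal{V}(I)$. That identification requires $I$ to have multiplicity one along every minimal prime, which the paper does not verify. It is also not automatic: for the initial ideal itself, the reduced variety is a linear space of degree $1$.

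Your argument supplies exactly this missing step. Unmixedness comes from the complete-intersection structure. No component lies inside a hyperplane $M_{i,i}=0$, because that locus has dimension $d-1$. The Jacobian with respect to the off-diagonal variables is diagonal with entries $2M_{i,j}\neq0$ on $\{M_{1,1}\cdots M_{d,d}\neq0\}$. As a by-product, you show that the $r_{i,j}$ generate a radical ideal, hence the full vanishing ideal of the variety. For the degree alone, generic reducedness on each $d$-dimensional component would already suffice; Serre's criterion upgrades this to radicality.

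One small correction. \Cref{lem:gb_k2_univVar} is formulated in the $\sigma$-variables, and the Gr\"obner property is not in general preserved under a linear change of coordinates. In the $M$-coordinates you should instead invoke Buchberger's criterion directly, which applies because the leading monomials $M_{i,j}^2$, $i<j$, are pairwise coprime.
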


\begin{proof}
With \Cref{lem:gb_k2_univVar} it suffices to compute the degree of the initial ideal generated by $d(d-1)/2$ monomials $\sigma_{(\w{ij},1\,2)}^2$ of degree $2$ with pairwise distinct variables $i<j$, so the claim follows with Bézout's theorem, e.g., compare \cite[Theorem 2.16]{michalek2021invitation}.  
\end{proof}

Clearly, the universal variety $\mathcal{U}_{d,k}$ is a subvariety of the variety of symmetrized orbit sums from \Cref{prop:GBforshufflerelationsk2}. Furthermore, the varieties $\mathcal{P}_{d,2,m}$ of polynomial membranes are subvarieties of $\mathcal{U}_{d,k}$. Our main result is that $\mathcal{P}_{d,2,(m,n)}$ eventually fills the variety of symmetrized orbit sums, i.e., the grid \eqref{eq:grid_of_varieties} stabilizes in the $2$-level universal variety.  

\begin{theorem}\label{thm:dimensionUniversalVariety}
The $2$-level universal variety satisfies
\begin{equation*}
\mathcal{U}_{d,2}=\bigcup_{m,n\to\infty}\mathcal{P}_{d,2,(m,n)}=\mathcal{V}\bigl(r_{i,j}\mid 1\leq i,j\leq d\bigr),
\end{equation*}
has degree $2^{d(d-1)/2}$, and its dimension is the second pentagonal number,
\begin{equation*}
\dim(\mathcal{U}_{d,2})=\frac{1}{2}d(3d+1).
\end{equation*}
\end{theorem}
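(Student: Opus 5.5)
Write $\mathcal{R}:=\mathcal{V}(r_{i,j}\mid 1\leq i,j\leq d)$ and $\mathcal{P}:=\bigcup_{m,n}\mathcal{P}_{d,2,(m,n)}$. By \Cref{prop:Pdkmn_irred} the grid stabilizes, so $\mathcal{P}=\mathcal{P}_{d,2,(M,N)}$ is an irreducible variety. The plan is to prove the chain of inclusions
\[
\mathcal{P}\subseteq\mathcal{U}_{d,2}\subseteq\mathcal{R}
\]
and then close it with a dimension count. The degree statement then follows from \Cref{cor:degree-shuffle-k2}, and the dimension from \Cref{prop:GBforshufflerelationsk2}.

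The first inclusion holds because signatures of polynomial membranes satisfy all shuffle relations (\Cref{thm:shuffle-identity}). The second holds because each $r_{i,j}$ lies in $U_{d,2}$ by \Cref{ex:shuffle-k2}.

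To close the chain, I first show that $\mathcal{R}$ is irreducible. The linear change of coordinates
\[
(\sigma_{(\w{ij},1\,2)},\sigma_{(\w{ij},2\,1)})\mapsto\bigl(M_{i,j},\ \text{complementary linear forms}\bigr)
\]
identifies $\mathcal{R}$ with (the affine cone of symmetric $d\times d$ matrices of rank $\leq 1$) $\times\,\C^{2d^2-\binom{d+1}{2}}$. Here $M$ is the matrix from the proof of \Cref{lem:gb_k2_univVar}, its $\binom{d+1}{2}$ entries are linearly independent forms, and the $r_{i,j}$ generate its $2\times 2$ minors. The rank-one locus is the irreducible cone $\{vv^\top\}$, which is the image of $\C^d$. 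So $\mathcal{R}$ is irreducible of dimension $d+2d^2-\binom{d+1}{2}=\tfrac12 d(3d+1)$, consistent with \Cref{prop:GBforshufflerelationsk2}.

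It then suffices to show $\dim\mathcal{P}\geq\tfrac12d(3d+1)$, since an irreducible closed subvariety of full dimension in an irreducible variety equals it.

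For the dimension bound, use \Cref{cor:realizationPimdkmn} and \Cref{cor:productMembranes}: $\mathcal{P}$ is the closure of the image of
\[
A\mapsto\bigl(A(S_m\otimes S_n)A^\top,\ A(S_m\otimes S_n^\top)A^\top\bigr).
\]
I would compute the rank of the Jacobian of this map at a generic $A$.

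A cleaner route is to split each slice into symmetric and antisymmetric parts. Summing the two slices gives $2\,A\,\mathrm{sym}(S_m)\otimes \mathrm{sym}(S_n)\,A^\top$ up to the antisymmetric part. Since $S+S^\top=\mathbf{1}\mathbf{1}^\top$ for moment-curve signatures (the path shuffle identity gives $S+S^\top=vv^\top$ with $v$ the increment), the symmetric part of the sum is exactly $\tfrac12(Av)(Av)^\top$, which recovers the rank-one factor with $d$ parameters.

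The remaining $2d^2-\binom{d+1}{2}$ coordinates are spanned by the two antisymmetric parts and the difference of the symmetric parts. These are $A(\mathrm{skew}(S_m)\otimes\cdots)A^\top$-type congruences, which for large $m,n$ should fill $\wedge^2\C^d\oplus\wedge^2\C^d\oplus\mathrm{Sym}^2\C^d$ modulo the already-used direction.

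The main obstacle is this surjectivity, i.e.\ showing the differential has rank $\tfrac12d(3d+1)$. I would attack it as follows. First, use \Cref{thm:polyVsPwLin} to pass to piecewise bilinear membranes, where the dictionary $\mathsf{Axis}^{m,n}$ has explicit $0/1$-type signature matrices (upper triangular with $\tfrac12$ on the diagonal). Second, exhibit an explicit point $A$ (e.g.\ built from block-diagonal concatenations via \Cref{thm:Chen} of elementary membranes supported on one or two coordinates) whose Jacobian has full rank. A rank computation on such explicit sparse witnesses, inductively in $d$ by adding one coordinate at a time, should give the bound. Failing a clean general argument, I would verify the rank for small $d$ and reduce general $d$ to pairs of coordinates, since all relations $r_{i,j}$ involve only two letters.
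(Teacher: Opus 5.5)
\textbf{First gap: $\mathcal{R}$ is not irreducible.} The step that closes your chain fails for $d\geq 3$, and so does part of the statement. Since $r_{i,i}=0$ and $r_{i,j}=r_{j,i}=M_{i,j}^2-M_{i,i}M_{j,j}$, the $r_{i,j}$ are only the \emph{principal} $2\times 2$ minors of $M$. They do not generate the ideal of all $2\times 2$ minors.

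A concrete witness: the entries of $M$ are independent linear forms, so your $\mathcal{R}=\mathcal{V}(r_{i,j}\mid 1\le i,j\le d)$ contains points where the leading $3\times 3$ block of $M$ is $\begin{psmallmatrix}1&1&1\\1&1&-1\\1&-1&1\end{psmallmatrix}$ and all other entries of $M$ vanish. There $\operatorname{rank}M=3$ and $M_{1,2}M_{2,3}-M_{1,3}M_{2,2}=-2\neq 0$.

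In fact $\mathcal{R}$ is a complete intersection of degree $2^{\binom d2}$ with $2^{\binom{d-1}{2}}$ irreducible components. They are the sets $\{\operatorname{rank}(E\circ M)\le 1\}$, where $E$ runs over symmetric $\pm1$ matrices with unit diagonal, modulo $E\mapsto DED$ for diagonal sign matrices $D$. Each has dimension $\tfrac12 d(3d+1)$ and degree $2^{d-1}$.

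On $\mathcal{G}_{d,2}$, the weight-two shuffle relations say exactly $M=vv^\top$ with $v=(\sigma_{(\w{1},1)},\dots,\sigma_{(\w{d},1)})$ arbitrary. Hence $\mathcal{U}_{d,2}$ is the single component $\mathcal{R}':=\{\operatorname{rank}M\le 1\}$, the one with $E$ the all-ones matrix. Consequently, for $d\geq 3$:
\begin{itemize}
\item $\mathcal{U}_{d,2}\subsetneq\mathcal{V}(r_{i,j})$;
\item $\deg\mathcal{U}_{d,2}=2^{d-1}$ (the degree of the Veronese cone), not $2^{d(d-1)/2}$.
\end{itemize}
These two claims hold only for $d=2$, so no argument can prove them in general.

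Your write-up already contains the warning sign. The rank-one cone you identify $\mathcal{R}$ with has degree $2^{d-1}$, which contradicts the value you import from \Cref{cor:degree-shuffle-k2}. The paper's proof does not resolve this either. It only matches dimensions, and \Cref{lem:gb_k2_univVar} makes the same conflation of principal minors with all $2\times 2$ minors.

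If you replace $\mathcal{R}$ by $\mathcal{R}'$, your skeleton $\mathcal{P}\subseteq\mathcal{U}_{d,2}\subseteq\mathcal{R}'$ plus irreducibility and a dimension count is sound. Moreover, $\dim\mathcal{U}_{d,2}=d+2d^2-\binom{d+1}{2}=\tfrac12 d(3d+1)$ then follows directly from this description of $\mathcal{G}_{d,2}$, with no Jacobian needed.

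\textbf{Second gap: the lower bound is only a plan.} The substantive half of the theorem is $\dim\mathcal{P}\geq\tfrac12 d(3d+1)$, equivalently $\bigcup_{m,n}\mathcal{P}_{d,2,(m,n)}=\mathcal{U}_{d,2}$. In your proposal this is stated as ``should fill'', with a witness still to be found and a fallback to small $d$.

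Your proposed reduction to pairs of letters rests on the same misconception. The actual equations of $\mathcal{U}_{d,2}$ include minors in three or four letters, such as $M_{1,2}M_{3,4}-M_{1,3}M_{2,4}$. In any case, a generic Jacobian rank is not assembled from two-letter restrictions.

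Be careful with the witness family as well. If the pieces are single bilinear cells, $\sqcup$-concatenation cannot work. Every $(w,2\,1)$ admits only trivial decompositions, so by \Cref{thm:Chen} the $(2\,1)$-slice is additive, and a cell with $\partial_{12}X=a$ contributes the symmetric matrix $\tfrac14 aa^\top$. The $\binom d2$ free skew directions of that slice in $\mathcal{U}_{d,2}$ are then never reached.

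The paper's proof fills this step with an explicit computation:
\begin{enumerate}
\item It puts $N_1=\sigma^{(1\,2)}(\mathsf{Mom}^{d,d})$ and $N_2=\sigma^{(2\,1)}(\mathsf{Mom}^{d,d})$ into simultaneous congruence normal form (\Cref{lemma:sigma12odd,lemma:sigma12even}).
\item It evaluates the Jacobian of $A\mapsto(AN_1A^\top,AN_2A^\top)$ at the witness $A=\mathbf{1}_d^\top\otimes I_d$.
\item It splits the Jacobian into an $N_1$-block of full rank $d^2$, imported from \cite{LS24}, and an $(N_1+N_2)$-block whose rank equals that of an explicit operator $L$ on $\R^{d^2}$, namely $\binom{d+1}{2}$.
\item It argues that the ranks add, because the second block vanishes on perturbations supported on the copies of $I_d$ of index $2,\dots,d$.
\end{enumerate}
Your symmetric/antisymmetric bookkeeping, in which the sum of the slices carries $\tfrac12(A\mathbf{1})(A\mathbf{1})^\top$, is the same split as the paper's $N_1$ versus $N_1+N_2$. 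What is missing is the concrete witness and the rank count that turn it into a proof.
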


The proof is carried out in the remainder of this section. 
In contrast to \cite[Theorem 5.6]{bib:AFS2019} we cannot use the Chen-Chow theorem for path signatures, e.g., see \cite[Theorem 7.28]{Friz_Victoir_2010}, since this is not yet established for membranes. Note that the following alternative approach leads to an algebraic Chen-Chow result for membranes  in the second graded component, \Cref{cor:algChenChow-k2}. 
The novel approach computes the rank of the Jacobian of the signature parametrization at the dictionary $\mathsf{Mom}^{d,d}$ from \eqref{eq:into_momentMembrane}. We first bring its two slices $\sigma^{(1,2)}(\mathsf{Mom}^{d,d})$ and $\sigma^{(2,1)}(\mathsf{Mom}^{d,d})$ to a direct sum of the elementary blocks, 
\begin{equation*}
H_2(1):=\begin{pmatrix}0&1\\1&0\end{pmatrix} \quad \text{ and }\quad H_2(-1):=\begin{pmatrix}0&-1\\1&0\end{pmatrix}
\end{equation*}
which are used in the construction of congruence normal forms, 
\cite[Theorem 1]{HORN20061010}. 
We  treat the two parities of $d$ separately.

\begin{lemma}\label{lemma:sigma12odd}
For $d$ odd, set $\kappa:=\frac{d-1}{2}$. With
\begin{align*}
N_1 &:= 1 \oplus H_2(-1)^{\oplus 2\kappa} \oplus H_2(1)^{\oplus \kappa^2} \oplus (-H_2(1))^{\oplus \kappa^2} ,
\\
N_2 &:= 1 \oplus H_2(-1)^{\oplus \kappa} \oplus (-H_2(-1))^{\oplus \kappa} \oplus (-H_2(1))^{\oplus \kappa^2} \oplus H_2(1)^{\oplus \kappa^2} ,
\end{align*}
we have the simultaneous congruence, 
\begin{equation*}
    N_1\sim \sigma^{(1\,2)}(\mathsf{Mom}^{d,d}) \quad\text{ and }\quad N_2 \sim \sigma^{(2\,1)}(\mathsf{Mom}^{d,d}).
\end{equation*}

\end{lemma}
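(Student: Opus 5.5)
The plan is to reduce the simultaneous congruence to a single congruence statement for the path signature matrix $S_d$ of the moment curve, and then lift it with Kronecker products. By \Cref{cor:productMembranes} we have $\sigma^{(1\,2)}(\mathsf{Mom}^{d,d})=S_d\otimes S_d$ and $\sigma^{(2\,1)}(\mathsf{Mom}^{d,d})=S_d\otimes S_d^\top$. Suppose we find one invertible real $P$ with $PS_dP^\top=C:=1\oplus H_2(-1)^{\oplus\kappa}$. Then the mixed-product property (as in the proof of \Cref{thm:polyVsPwLin}) gives
\[
(P\otimes P)(S_d\otimes S_d)(P\otimes P)^\top=C\otimes C,\qquad (P\otimes P)(S_d\otimes S_d^\top)(P\otimes P)^\top=C\otimes C^\top .
\]
So one matrix $P\otimes P$ transforms both slices simultaneously, which is exactly what "simultaneous congruence" requires.

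For the path step, I would use that $S_d+S_d^\top=(1)_{p,q}$ is the all-ones matrix, so $S_d=\tfrac12\mathbf{1}\mathbf{1}^\top+K$ with $K$ skew-symmetric. Since $d$ is odd, $K$ has rank at most $d-1$. I would first try to invoke the congruence normal form for generic path signature matrices from \cite[Theorem 3.4]{bib:AFS2019} (built on \cite[Theorem 1.1]{HORN20061010}), which for odd $d$ should give exactly $1\oplus H_2(-1)^{\oplus\kappa}$. If that reference does not cover $S_d$ directly, the fallback is a direct construction:
\begin{enumerate}[(i)]
\item show that $K$ has rank $d-1$ (e.g.\ from a Cauchy-type determinant of its principal minors), with kernel spanned by some $u$ satisfying $\mathbf{1}^\top u\neq0$;
\item normalize $u$ so that $u^\top S_du=\tfrac12(\mathbf{1}^\top u)^2>0$ becomes $1$, using that the congruence is over $\R$ and the scalar is positive;
\item restrict to the complement $\{x\mid \mathbf{1}^\top x=0\}$, on which the form is purely the nondegenerate skew form $K$;
\item bring $K$ there to $H_2(-1)^{\oplus\kappa}$ by a symplectic (Darboux) basis.
\end{enumerate}
I expect this rank and kernel computation, or checking that the cited normal form really applies to $S_d$, to be the main obstacle.

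The remaining step is bookkeeping with blocks, where reorderings of basis vectors are permutation congruences applied simultaneously to both slices. With $H:=H_2(-1)$, expand
\[
C\otimes C=(1\otimes1)\oplus(1\otimes H)^{\oplus\kappa}\oplus(H\otimes1)^{\oplus\kappa}\oplus(H\otimes H)^{\oplus\kappa^2}
\]
after a permutation, and likewise for $C\otimes C^\top$. Since $H^\top=-H$, the second slice becomes
\[
C\otimes C^\top=1\oplus(-H)^{\oplus\kappa}\oplus H^{\oplus\kappa}\oplus(-H\otimes H)^{\oplus\kappa^2}.
\]
Its middle blocks are $H^{\oplus\kappa}\oplus(-H)^{\oplus\kappa}$, which matches $N_2$ after reordering. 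In the first slice the middle blocks give $H_2(-1)^{\oplus2\kappa}$, matching $N_1$.

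Finally, $H\otimes H$ is the symmetric $4\times4$ antidiagonal matrix with antidiagonal entries $(1,-1,-1,1)$. Reordering the basis as $(e_1,e_4,e_2,e_3)$ turns it into $H_2(1)\oplus(-H_2(1))$. The same permutation sends $-H\otimes H$ to $(-H_2(1))\oplus H_2(1)$, as required. Grouping the $\kappa^2$ copies of these blocks by one global permutation yields $N_1$ and $N_2$ simultaneously.
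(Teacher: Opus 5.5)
Your proposal is correct and follows the paper's proof. Both use the path normal form $S_d\sim 1\oplus H_2(-1)^{\oplus\kappa}$ from \cite[Theorem 3.4]{bib:AFS2019}, lift it to both slices with the single matrix $P\otimes P$ via the mixed-product property, expand block by block, and apply the row/column permutation that turns $\pm H_2(-1)\otimes H_2(-1)$ into $\pm\bigl(H_2(1)\oplus(-H_2(1))\bigr)$. Your fallback for the path step goes beyond what the paper needs, and it is easier than you fear: $S_d=\bigl(\tfrac{1}{p+q}\bigr)_{p,q}\operatorname{diag}(1,\dots,d)$ is invertible by Cauchy's determinant, so $S_d=\tfrac12\mathbf{1}\mathbf{1}^\top+K$ forces $\operatorname{rank}K=d-1$, and $S_du=\tfrac12(\mathbf{1}^\top u)\mathbf{1}$ for nonzero $u\in\ker K$ forces $\mathbf{1}^\top u\neq0$.
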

\begin{proof} 
By \cite[Theorem 3.4]{bib:AFS2019}, we know 
$$\sigma^{(2)}(\mathsf{Mom}^d)\sim 1\oplus H_2(-1)^{\oplus \kappa}=:Y.$$
By the decomposition in \Cref{twoparamProdMemb}, $\sigma^{(1\,2)}(\mathsf{Mom}^{d,d})=\sigma^{(2)}(\mathsf{Mom}^d)\otimes\sigma^{(2)}(\mathsf{Mom}^d)$ and $\sigma^{(2\,1)}(\mathsf{Mom}^{d,d})=\sigma^{(2)}(\mathsf{Mom}^d)\otimes\sigma^{(2)}(\mathsf{Mom}^d)^\top$. A congruence $C=PMP^\top$ shows that $C\otimes C=Q(M\otimes M)Q^\top$ and $C\otimes C^\top=Q(M\otimes M^\top)Q^\top$ with $Q=P\otimes P$, by the mixed-product property \cite[Lemma 4.2.10]{HornJohnson1991Topics}, so $\sigma^{(1\,2)}(\mathsf{Mom}^{d,d})\sim Y\otimes Y$ and $\sigma^{(2\,1)}(\mathsf{Mom}^{d,d})\sim Y\otimes Y^\top$.
Expanding 
$$Y\otimes Y=1\oplus H_2(-1)^{\oplus 2\kappa}\oplus\bigl(H_2(-1)^{\oplus \kappa}\otimes H_2(-1)^{\oplus \kappa}\bigr)$$
the last term splits into $\kappa^2$ blocks $H_2(-1)\otimes H_2(-1)$, each congruent to $H_2(1)\oplus(-H_2(1))$ by a row/column permutation, using $H_2(-1)^\top=-H_2(-1)$, which gives $N_1$. Clearly, we have  $Y^\top=1\oplus(-H_2(-1))^{\oplus \kappa}$, so the expansion of $Y\otimes Y^\top$ gives $N_2$, with each block  $H_2(-1)\otimes(-H_2(-1))\sim(-H_2(1))\oplus H_2(1)$.
\end{proof}

The even case is analogous, carrying another block
\begin{equation*}
\tilde\Gamma_2:=\begin{pmatrix}1&-1\\1&0\end{pmatrix}
\end{equation*}
that realizes the representative in \Cref{lemma:sigma12even}. We use $\tilde\Gamma_2$ instead of the original congruent block $\Gamma_2$ from \cite[Theorem 1]{HORN20061010} for technical reasons in the proof of \Cref{thm:dimensionUniversalVariety}, see also \Cref{rem:whyGammaTilde} for further details. 

\begin{lemma}\label{lemma:sigma12even}
For $d$ even, set $\kappa:=\frac{d-2}{2}$. For
\begin{align*}
N_1 &:= (\tilde\Gamma_2\otimes\tilde\Gamma_2) \oplus (\tilde\Gamma_2\otimes H_2(-1))^{\oplus \kappa} \oplus (H_2(-1)\otimes\tilde\Gamma_2)^{\oplus \kappa} \oplus H_2(1)^{\oplus \kappa^2} \oplus (-H_2(1))^{\oplus \kappa^2},
\\
N_2 &:= (\tilde\Gamma_2\otimes\tilde\Gamma_2^\top) \oplus (\tilde\Gamma_2\otimes(-H_2(-1)))^{\oplus \kappa} \oplus (H_2(-1)\otimes\tilde\Gamma_2^\top)^{\oplus \kappa} \oplus (-H_2(1))^{\oplus \kappa^2} \oplus H_2(1)^{\oplus \kappa^2},
\end{align*}
we have the simultaneous congruence, 
\begin{equation*}
N_1 \sim \sigma^{(1\,2)}(\mathsf{Mom}^{d,d}) \quad\text{ and }\quad N_2  \sim \sigma^{(2\,1)}(\mathsf{Mom}^{d,d}).
\end{equation*}
\end{lemma}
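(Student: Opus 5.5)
We mirror the proof of \Cref{lemma:sigma12odd}. The plan is to find a single real matrix $Y$ with $\sigma^{(2)}(\mathsf{Mom}^d)\sim Y$ for the even case. Then, as there, the mixed-product property \cite[Lemma 4.2.10]{HornJohnson1991Topics} with $Q=P\otimes P$ transports one congruence $\sigma^{(2)}(\mathsf{Mom}^d)=PYP^\top$ simultaneously to both slices. Via \Cref{twoparamProdMemb} this gives
\begin{equation*}
\sigma^{(1\,2)}(\mathsf{Mom}^{d,d})\sim Y\otimes Y,\qquad \sigma^{(2\,1)}(\mathsf{Mom}^{d,d})\sim Y\otimes Y^\top .
\end{equation*}
Simultaneity is automatic, since the same $Q$ is used for both slices.

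For $d$ even, \cite[Theorem 3.4]{bib:AFS2019} gives the congruence normal form of the path signature matrix of the moment curve. It is $\Gamma_2\oplus H_2(-1)^{\oplus\kappa}$ with $\kappa=(d-2)/2$. The first step is therefore to check that $\tilde\Gamma_2\sim\Gamma_2$ by an explicit real $2\times 2$ congruence $R\Gamma_2R^\top=\tilde\Gamma_2$. It suffices to match the symmetric part (rank one, positive) and the skew part of $\Gamma_2=\begin{psmallmatrix}0&-1\\1&1\end{psmallmatrix}$. A permutation-with-sign, or a small triangular $R$, should do this; if necessary, we compare the cosquares $\Gamma^{-\top}\Gamma$, which must be similar. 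This yields $Y:=\tilde\Gamma_2\oplus H_2(-1)^{\oplus\kappa}$ and
\begin{equation*}
Y^\top=\tilde\Gamma_2^\top\oplus(-H_2(-1))^{\oplus\kappa}.
\end{equation*}

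Next we expand the Kronecker product of direct sums. The product $(A\oplus B)\otimes(C\oplus D)$ is permutation-congruent to $(A\otimes C)\oplus(A\otimes D)\oplus(B\otimes C)\oplus(B\otimes D)$. The permutation is a perfect shuffle of coordinates, which is orthogonal, so it is a congruence. Crucially, the same permutation works for $Y\otimes Y$ and $Y\otimes Y^\top$, because the block sizes agree. Applying this expansion gives the blocks $\tilde\Gamma_2\otimes\tilde\Gamma_2$, $(\tilde\Gamma_2\otimes H_2(-1))^{\oplus\kappa}$, $(H_2(-1)\otimes\tilde\Gamma_2)^{\oplus\kappa}$ and $\kappa^2$ copies of $H_2(-1)\otimes H_2(-1)$. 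For the second slice the blocks are the same, except that $H_2(-1)$ is replaced by $-H_2(-1)$ in the right factor and $\tilde\Gamma_2$ by $\tilde\Gamma_2^\top$. The last family of blocks is handled exactly as in \Cref{lemma:sigma12odd}. There, $H_2(-1)\otimes H_2(-1)\sim H_2(1)\oplus(-H_2(1))$ and $H_2(-1)\otimes(-H_2(-1))\sim(-H_2(1))\oplus H_2(1)$ via one common $4\times4$ row/column permutation (swapping positions $2$ and $4$ or similar).

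The main obstacle is simultaneity at the level of the individual $4\times4$ blocks. Each block reduction must use the same permutation in both slices, and the ordering of summands in $N_1$ and $N_2$ must match the order produced by the global shuffle. We will verify directly that the permutation reducing $H_2(-1)\otimes H_2(-1)$ also reduces $H_2(-1)\otimes H_2(-1)^\top=-H_2(-1)\otimes H_2(-1)$, which is immediate since it is the negative. The mixed blocks involving $\tilde\Gamma_2$ are kept unreduced in $N_1,N_2$, so they need no further work. Finally, we assemble the block congruences into a direct sum; this gives one matrix realizing both congruences.
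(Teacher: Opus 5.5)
Your proposal is correct and follows the paper's own proof essentially step for step. You take one congruence $\sigma^{(2)}(\mathsf{Mom}^d)=PYP^\top$ with $Y=\tilde\Gamma_2\oplus H_2(-1)^{\oplus\kappa}$ from \cite[Theorem 3.4]{bib:AFS2019}, transport it to both slices via $Q=P\otimes P$, and then expand block by block exactly as in \Cref{lemma:sigma12odd}. The step you leave open is closed by the signed permutation $R=\begin{psmallmatrix}0&1\\-1&0\end{psmallmatrix}$, which gives $R\Gamma_2R^\top=\tilde\Gamma_2$; use this rather than the cosquare fallback, since similar cosquares only guarantee complex congruence.
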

\begin{proof}
Here $\sigma^{(2)}(\mathsf{Mom}^d)\sim\Gamma_2\oplus H_2(-1)^{\oplus \kappa}$ with  \cite[Theorem 3.4]{bib:AFS2019}. Since $\tilde\Gamma_2\sim\Gamma_2$, applying this congruence block-diagonally gives $\sigma^{(1\,2)}(\mathsf{Mom}^d)\sim Y:=\tilde\Gamma_2\oplus H_2(-1)^{\oplus \kappa}$. Passing to $Y\otimes Y$ and $Y\otimes Y^\top$ via \Cref{twoparamProdMemb} and expanding block by block is identical to \Cref{lemma:sigma12odd}, with $\tilde\Gamma_2$ in place of the leading $1$.
\end{proof}

\begin{proof}[Proof of \Cref{thm:dimensionUniversalVariety}]
Set $p:=d^2$. By equivariance (\Cref{lem:equivariance}) and \Cref{cor:productMembranes}, the second universal variety is the Zariski closure of the image of the congruence orbit
\begin{equation*}
\phi:\ \R^{d\times p}\longrightarrow\R^{p\times p}\times\R^{p\times p},
\qquad
\phi(A)=\bigl(AN_1A^\top,\ AN_2A^\top\bigr),
\end{equation*}
with $N_1:=\sigma^{(1\,2)}(\mathsf{Mom}^{d,d})$ and $N_2:=\sigma^{(2\,1)}(\mathsf{Mom}^{d,d})$. Thus $\dim\mathcal{U}_{d,2}$ equals the generic rank of the Jacobian $J(A)$ of $\phi$. This rank is invariant under a simultaneous congruence $N_i\mapsto QN_iQ^\top$, so we take $N_1,N_2$ in the normal form of \Cref{lemma:sigma12odd,lemma:sigma12even}. Since $\operatorname{im}\phi\subseteq\mathcal{V}(G_d)$, \Cref{prop:GBforshufflerelationsk2} yields
\begin{equation}\label{eq:upperbound}
\operatorname{rank}J(A)\ \le\ \dim\mathcal{V}(G_d)\ =\ \tfrac{1}{2}d(3d+1).
\end{equation}
As the generic rank is at least the rank at any given point, it suffices to exhibit a single $A$ at which \eqref{eq:upperbound} is an equality. We treat $d$ odd; the case $d$ even is analogous, with the leading scalar block replaced by $\tilde\Gamma_2$ as in \Cref{lemma:sigma12even}, see also \Cref{rem:whyGammaTilde}. 

Differentiating at $A\in\R^{d\times p}$, for $i\in\{1,2\}$,
\begin{equation*}
\mathrm{d}(AN_iA^\top)=( \mathrm{d}A)\,N_iA^\top+AN_i\,(\mathrm{d} A)^\top.
\end{equation*}
Vectorizing through $\operatorname{vec}(XYZ)=(Z^\top\otimes X)\operatorname{vec}(Y)$, and using the commutation matrix $K_{d,p}$ defined by $\operatorname{vec}(X^\top)=K_{d,p}\operatorname{vec}(X)$ for $X\in\R^{d\times p}$, see \cite[Eq.~520]{petersen2012matrixcookbook}, gives
\begin{equation*}
J_i(A)=AN_i^\top\otimes I_d+(I_d\otimes AN_i)\,K_{d,p},
\qquad
J(A)=\begin{bmatrix}J_1(A)\\ J_2(A)\end{bmatrix}\in\R^{2d^2\times dp}.
\end{equation*}
Two rank-preserving operations simplify this Jacobian. For the first,  the invertibility of $K_{d,p}$ and $AN_i^\top\otimes I_d=K_{d,d}(I_d\otimes AN_i^\top)K_{d,p}$ permit right multiplication by $K_{d,p}^{-1}$, producing the rows $K_{d,d}(I_d\otimes AN_i^\top)+I_d\otimes AN_i$. Second, replacing the second row by the sum of both is an invertible row operation. Hence
\begin{equation}\label{Eq:two_blocks}
\operatorname{rank}J(A)=\operatorname{rank}
\begin{bmatrix}
K_{d,d}(I_d\otimes AN_1^\top)+I_d\otimes AN_1\\[4pt]
K_{d,d}\bigl(I_d\otimes A(N_1+N_2)^\top\bigr)+I_d\otimes A(N_1+N_2)
\end{bmatrix},
\end{equation}
whose first row involves only $N_1$ and whose second row involves $N_1+N_2$. We evaluate the two rows of \eqref{Eq:two_blocks} at
\begin{equation}\label{eq:witness}
A:=\mathbf{1}_d^\top\otimes I_d=(I_d,\dots,I_d)\in\R^{d\times d^2},
\end{equation}
the $d$-fold horizontal concatenation of $I_d$, where $\mathbf{1}_d\in\R^d$ is the all-ones vector.
By \Cref{lemma:sigma12odd}, the second row satisfies
\begin{equation*}
N_1+N_2=W\oplus 0^{\oplus(d^2-d)},
\qquad
W:=2\oplus\bigl(2H_2(-1)\bigr)^{\oplus \kappa},\quad \kappa=\tfrac{d-1}{2}.
\end{equation*}
As $N_1+N_2$ is supported on its leading $d\times d$ block, we obtain $A(N_1+N_2)=e_1^\top\otimes W$ and $A(N_1+N_2)^\top=e_1^\top\otimes W^\top$. Since $H_2(-1)^\top=-H_2(-1)$, transposition fixes the leading scalar $2$ of $W$ and negates each $2H_2(-1)$ block. Setting $\epsilon_1:=1$ and $\epsilon_a:=-1$ for $2\le a\le d$, this reads
$
We_a=\epsilon_a\,W^\top e_a$ for $1\le a\le d$.
Substituting $A(N_1+N_2)=e_1^\top\otimes W$ and $A(N_1+N_2)^\top=e_1^\top\otimes W^\top$ into the second row of \eqref{Eq:two_blocks} and setting $R:=I_d\otimes(e_1^\top\otimes W^\top)$, we obtain the factorization
\begin{equation*}
L\,R=K_{d,d}\bigl(I_d\otimes A(N_1+N_2)^\top\bigr)+I_d\otimes A(N_1+N_2),
\end{equation*}
where $L:\R^{d^2}\to\R^{d^2}$ is the linear map defined on the standard basis by
\begin{equation*}
L(e_a\otimes e_b):=e_b\otimes e_a+\epsilon_b\,(e_a\otimes e_b).
\end{equation*}
Because $W$ is invertible, $R$ has full row rank $d^2$, so the second row and $L$ have equal rank.

The operator $L$ preserves each line $\operatorname{span}_\R(e_a\otimes e_a)$ for $1\le a\le d$ and each plane $\operatorname{span}_\R(e_a\otimes e_b,\,e_b\otimes e_a)$ for $1\le a<b\le d$, and $\R^{d^2}$ decomposes as their direct sum. On subspace $\operatorname{span}_\R(e_a\otimes e_a)$, $L$ acts as multiplication by $1+\epsilon_a$, which vanishes precisely for $a\ge 2$. On $\operatorname{span}_\R(e_a\otimes e_b,\,e_b\otimes e_a)$, $L$ acts as a matrix multiplication of $\begin{psmallmatrix}\epsilon_b & 1\\ 1 & \epsilon_a\end{psmallmatrix}$, 
of determinant $\epsilon_a\epsilon_b-1$, which vanishes precisely when $\epsilon_a=\epsilon_b$, that is, for $2\le a<b$. Hence
\begin{equation}\label{eq:kerL}
\dim\ker L=(d-1)+\binom{d-1}{2}=\binom{d}{2},
\qquad
\operatorname{rank}L=\frac{d(d+1)}{2}.
\end{equation}
The first row of \eqref{Eq:two_blocks} is the Jacobian of the parametrization $A\mapsto AN_1A^\top$ associated with the id-signature component alone, whose variety is the object of \cite[Theorem 4.7]{LS24}. That result asserts that this variety attains the full ambient dimension $d^2$; its proof shows that at the point \eqref{eq:witness} the Jacobian already has full row rank $d^2$.

It remains to add the two ranks. As the first row has full row rank $d^2$, the rank of \eqref{Eq:two_blocks} equals $d^2$ plus the dimension of the image of the second row restricted to the kernel of the first. Writing $R=I_d\otimes\bigl[\,W^\top\ 0\ \cdots\ 0\,\bigr]$, the second row equals $L\,R$, and therefore vanishes on the subspace $\ker R$ of perturbations of \eqref{eq:witness} supported on the copies of $I_d$ of index $2,\dots,d$. 
The first row maps $\ker R$ onto $\R^{d^2}$, so $\ker R$ and the kernel of the first row together span $\R^{d^3}$. Since the second row vanishes on $\ker R$, its image on the kernel of the first row is the entire image of the second row, of dimension $d(d+1)/2$ by \eqref{eq:kerL}. Consequently
\begin{equation*}
\operatorname{rank}J(A)=d^2+\frac{d(d+1)}{2}=\frac{1}{2}d(3d+1),
\end{equation*}
which coincides with \eqref{eq:upperbound}. Thus the generic rank of $J$ equals $d(3d+1)/2=\dim\mathcal{U}_{d,2}$.
\end{proof}

\begin{remark}\label{rem:whyGammaTilde}
For $d$ even, set $\kappa:=\tfrac{d-2}{2}$ as in \Cref{lemma:sigma12even}
we obtain 
\begin{equation*}
W:=\tilde\Gamma_2+\tilde\Gamma_2^\top=\begin{pmatrix}2&0\\0&0\end{pmatrix}
\end{equation*}
 of rank $1$. Summing the blocks of $N_1$ and $N_2$ from the lemma, position by position, the terms
$(\tilde\Gamma_2\otimes H_2(-1))^{\oplus\kappa}$ and
$(\tilde\Gamma_2\otimes(-H_2(-1)))^{\oplus\kappa}$ cancel, as do both pairs
of $H_2(1)^{\oplus\kappa^2}$ and $(-H_2(1))^{\oplus\kappa^2}$ blocks. The remaining two positions combine, leaving
\begin{equation*}
N_1+N_2=(\tilde\Gamma_2\otimes W)\oplus (H_2(-1)\otimes W)^{\oplus\kappa}.
\end{equation*}
Since $\tilde\Gamma_2$ and $H_2(-1)$ are invertible and $W$ has rank $1$,
each Kronecker factor contributes rank $2$, so
$\operatorname{rank}(N_1+N_2)=2+2\kappa=d,
$
matching $\operatorname{rank}(W)=d$ in the odd case of
\Cref{lemma:sigma12odd}. The remainder of the argument, with the witness
$A$ adapted to the block structure of $N_1+N_2$, proceeds exactly as in the
proof of \Cref{thm:dimensionUniversalVariety} and yields the same value
$\operatorname{rank}J(A)=d(3d+1)/2$.
\end{remark}

\begin{theorem}[Algebraic Chen-Chow for $k=2$]\label{cor:algChenChow-k2}
Every generic $1\oplus v\oplus s\in\C\otimes \C^d\oplus\C^{d\times d\times 2}$, which satisfies the two-parameter shuffle relations
$$v_i\cdot v_j=s_{i,j,1}+s_{j,i,1}+s_{i,j,2}+s_{j,i,2}$$
 for all $1\leq i,j\leq d$, lies in the Zariski-closed image of the $2$-truncated signature restricted to piecewise bilinear membranes of order $(d,d)$.
\end{theorem}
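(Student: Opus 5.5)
The plan is to reduce the statement to \Cref{thm:dimensionUniversalVariety}, \Cref{thm:polyVsPwLin} and the fibre description in \Cref{prop:proojectionResult}.

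\emph{Step 1: identify the truncated variety.} The set of points $1\oplus v\oplus s$ satisfying the displayed weight-$2$ shuffle relations is exactly $\mathcal{G}_{d,2}$. For $k=2$, $G_{d,2}$ consists only of the relations $\sigma_{(\w{i},1)}\sigma_{(\w{j},1)}-\sigma_{(\w{i},1)\shuffle(\w{j},1)}$ together with the normalisation $\sigma_{\mathrm e}=1$. By \Cref{lem:orbit-identity}, with $r=2$ and $\alpha=(1,1)$ for $i\neq j$ and $\alpha=(2)$ via \Cref{lem:diagonal-identity} for $i=j$, these are precisely $v_iv_j=s_{i,j,1}+s_{j,i,1}+s_{i,j,2}+s_{j,i,2}$. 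The case $i=j$ reads $v_i^2=2(s_{i,i,1}+s_{i,i,2})$, consistent with the display. This variety is irreducible by \Cref{prop:free-group-variety}. Its dimension is $d+d(3d+1)/2$ minus $\dim$ of the fibre data, which is cleaner to see as follows: the projection $\operatorname{pr}\colon\mathcal{G}_{d,2}\to\mathcal{U}_{d,2}$ is dominant and generically $2$-to-one by \Cref{prop:proojectionResult}, so $\dim\mathcal{G}_{d,2}=\dim\mathcal{U}_{d,2}=\tfrac12 d(3d+1)$.

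\emph{Step 2: the image variety.} Let $\mathcal{L}^{\leq2}$ denote the Zariski closure of the image of $X\mapsto 1\oplus\sigma^{(1)}(X)\oplus\sigma^{(2)}(X)$ over piecewise bilinear membranes of order $(d,d)$. By \Cref{thm:shuffle-identity}, $\mathcal{L}^{\leq2}\subseteq\mathcal{G}_{d,2}$. Since this image is parametrized by $\R^{d\times d^2}$ via the dictionary $\mathsf{Axis}^{d,d}$ and equivariance (\Cref{lem:equivariance}, which applies equally at level $1$), $\mathcal{L}^{\leq2}$ is irreducible. Its projection to the top level is $\mathcal{L}_{d,2,(d,d)}$. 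By \Cref{thm:polyVsPwLin} this equals $\mathcal{P}_{d,2,(d,d)}$, and the proof of \Cref{thm:dimensionUniversalVariety}, whose witness Jacobian is computed exactly at the dictionary $\mathsf{Mom}^{d,d}$ of order $(d,d)$, shows it has dimension $\tfrac12 d(3d+1)$. Hence it equals $\mathcal{U}_{d,2}$ by irreducibility. Projection cannot raise dimension, so $\dim\mathcal{L}^{\leq2}\geq\dim\mathcal{U}_{d,2}=\dim\mathcal{G}_{d,2}$. Two irreducible varieties, one contained in the other and of equal dimension, coincide, so $\mathcal{L}^{\leq2}=\mathcal{G}_{d,2}$.

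\emph{Step 3: genericity and the main obstacle.} The conclusion is that a generic point of $\mathcal{G}_{d,2}$ lies in $\mathcal{L}^{\leq2}$. In fact every point does, which is stronger than needed. The main delicate step is justifying that the dimension computation in \Cref{thm:dimensionUniversalVariety} is realized at order exactly $(d,d)$ rather than in the limit $m,n\to\infty$. I would check this directly: the parametrization $\phi$ uses $N_i=\sigma^{(\cdot)}(\mathsf{Mom}^{d,d})$, and the rank computation at the witness \eqref{eq:witness} gives rank $\tfrac12 d(3d+1)$ for this fixed order. Transfer to piecewise bilinear membranes then follows from the invertible $H=H_d\otimes H_d$ in \Cref{thm:polyVsPwLin}.

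If one prefers not to pass through $\mathcal{G}_{d,2}$, an alternative is available. The descending recursion \eqref{eq:fibre-recursion} reconstructs $v$ from $s$ up to sign. The sign flip $v\mapsto -v$ is realized by replacing $A$ with $-A$, which fixes $\sigma^{(2)}$ and negates $\sigma^{(1)}$. So both preimages of a generic $s$ lie in the image closure, which is the same conclusion.
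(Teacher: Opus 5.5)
Your argument is correct, but your main route differs from the paper's. What you list as the ``alternative'' at the end is essentially the paper's proof.

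The paper argues fibrewise. For generic $s$ it takes $A_0\in\C^{d\times d^2}$ with $s=A_0*\sigma^{(2)}(\mathsf{Mom}^{d,d})$, using $\mathcal{U}_{d,2}=\mathcal{P}_{d,2,(d,d)}=\mathcal{L}_{d,2,(d,d)}$. It then invokes the exact two-point fibre $\operatorname{pr}^{-1}(s)=\{\pm v_0\oplus s\}$ from \Cref{prop:proojectionResult} and fixes the sign by $A\mapsto -A$.

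You instead compare dimensions at the truncated level. You have $\mathcal{L}^{\leq 2}\subseteq\mathcal{G}_{d,2}$ by the shuffle identity, and $\dim\mathcal{L}^{\leq2}\geq\dim\mathcal{L}_{d,2,(d,d)}=\frac12 d(3d+1)=\dim\mathcal{G}_{d,2}$. Irreducibility of $\mathcal{G}_{d,2}$ then forces equality.

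Your route needs only that $\operatorname{pr}$ is generically finite, not the explicit shape of its fibres or the homogeneity trick. It also gives a stronger conclusion: every point of $\mathcal{G}_{d,2}$ lies in the closure of the image, so the genericity hypothesis is superfluous. The paper's argument is more constructive in return. For generic points it produces an explicit complex matrix $A$, i.e.\ membership in the image of the complexified parametrization itself. You can recover this from your result by Chevalley's theorem, which the paper also uses implicitly to obtain $A_0$.

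Both routes rest on the same input: the Jacobian witness in \Cref{thm:dimensionUniversalVariety} already works at order $(d,d)$. You rightly single this out as the delicate step.

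Finally, the first clause of your dimension count in Step~1 is garbled. It is simpler to set $S_{ij}(s):=s_{i,j,1}+s_{j,i,1}+s_{i,j,2}+s_{j,i,2}$ and observe that the $\binom{d+1}{2}$ relations $v_iv_j=S_{ij}(s)$ for $i\leq j$ involve pairwise disjoint sets of $s$-coordinates. Hence $\mathcal{G}_{d,2}$ is a graph over $\C^d\times\C^{2d^2-\binom{d+1}{2}}$, so an affine space of dimension $\frac12 d(3d+1)$. This gives both irreducibility and the dimension without \Cref{prop:free-group-variety} or \Cref{prop:proojectionResult}.
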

\begin{proof}
By hypothesis, we have $1\oplus v\oplus s\in\mathcal{G}_{d,2}=\mathcal{V}(G_{d,2})$ at truncation level $2$. Consider the projection $\operatorname{pr}:\mathcal{G}_{d,2}\to \mathcal{U}_{d,2}$ from \Cref{prop:proojectionResult} that forgets the first-level coordinate. In particular $s=\operatorname{pr}(1\oplus v \oplus s) \in \mathcal{U}_{d,2}$.

By \Cref{thm:dimensionUniversalVariety,thm:polyVsPwLin}, we have $\mathcal{U}_{d,2}=\mathcal{P}_{d,2,(d,d)}=\mathcal{L}_{d,2,(d,d)}$. Since $1\oplus v \oplus s$ is generic, there exists $A_0\in \C^{d\times d^2}$ such that $s=A_0*\sigma^{(2)}(\mathsf{Mom}^{d,d})$. Setting $v_0:=A_0\mathbf{1}_d$, the truncated signature $1\oplus v_0\oplus s$ satisfies the shuffle relations and lies in the fiber $\operatorname{pr}^{-1}(s)$.
By genericity, $s$ lies in the dense open subset $\mathcal{U}_{d,2}^{\circ}\subseteq\mathcal{U}_{d,2}$ from \Cref{prop:proojectionResult} over which the projection is exactly two-to-one, with the two preimages related by the dilation action of the square roots of unity $\{\pm1\}$. Thus
$\operatorname{pr}^{-1}(s)=\{\,v_0\oplus s,\ -v_0\oplus s\,\}$, so either $v=v_0$ or $v=-v_0$.
If $v=v_0$, set $A:=A_0$.  If $v=-v_0$, set $A:=-A_0\in\C^{d\times d^2}$. The action $A*\sigma^{(\ell)}(\mathsf{Mom}^{d,d})$ is homogeneous of degree $\ell$ in $A$. Hence $(-A_0)*\sigma^{(\ell)}(\mathsf{Mom}^{d,d})=(-1)^\ell\,A_0*\sigma^{(\ell)}(\mathsf{Mom}^{d,d})$. In particular, $A\mathbf{1}_d=v$ and $A*\sigma^{(2)}$, so $1\oplus v \oplus s$ lies in the Zariski-closed image of the $2$-truncated signature restricted to piecewise bilinear membranes of order $(d,d)$, as desired.
\end{proof}

\begin{example}
For fixed $m$ and $n$, applying the same procedure as in \Cref{thm:dimensionUniversalVariety} but for the parametrization $A\mapsto\sigma^{(2)}(A*\mathsf{Mom}^{m,n})$ with $A\in\R^{d\times mn}$, we obtain the generic rank of its Jacobian $J_{d,m,n}$, whose value is  $\dim\mathcal{P}_{d,2,(m,n)}$ by \Cref{thm:polyVsPwLin}. In \Cref{fig:dimsMdmn}, we compute these dimensions with \cite{RS26} and present them in matrices indexed by $(m,n)$ until the full dimension $d(3d+1)/2$  has been reached.
\end{example}

\begin{table}[ht]
\centering
\begin{subtable}[t]{0.28\textwidth}\centering
$\begin{psmallmatrix} 3 \\ 5 & 10 \\ 6 & 13 & 15 \\ 6 & 14 & 15 & 15 \end{psmallmatrix}$
\caption*{$d=3$}
\end{subtable}\hfill
\begin{subtable}[t]{0.34\textwidth}\centering
$\begin{psmallmatrix} 4 \\ 7 & 14 \\ 9 & 19 & 24 \\ 10 & 22 & 25 & 26 \\ 10 & 23 & 25 & 26 & 26 \end{psmallmatrix}$
\caption*{$d=4$}
\end{subtable}\hfill
\begin{subtable}[t]{0.36\textwidth}\centering
$\begin{psmallmatrix} 5 \\ 9 & 18 \\ 12 & 25 & 33 \\ 14 & 30 & 36 & 38 \\ 15 & 33 & 37 & 39 & 40 \\ 15 & 34 & 37 & 39 & 40 & 40 \end{psmallmatrix}$
\caption*{$d=5$}
\end{subtable}

\medskip

\begin{subtable}[t]{0.28\textwidth}\centering
$\begin{psmallmatrix} 6 \\ 11 & 22 \\ 15 & 31 & 42 \\ 18 & 38 & 48 & 51 \\ 20 & 43 & 50 & 53 & 55 \\ 21 & 46 & 51 & 54 & 56 & 57 \\ 21 & 47 & 51 & 54 & 56 & 57 & 57 \end{psmallmatrix}$
\caption*{$d=6$}
\end{subtable}\hfill
\begin{subtable}[t]{0.34\textwidth}\centering
$\begin{psmallmatrix} 7 \\ 13 & 26 \\ 18 & 37 & 51 \\ 22 & 46 & 60 & 65 \\ 25 & 53 & 64 & 68 & 71 \\ 27 & 58 & 66 & 70 & 73 & 75 \\ 28 & 61 & 67 & 71 & 74 & 76 & 77 \\ 28 & 62 & 67 & 71 & 74 & 76 & 77 & 77 \end{psmallmatrix}$
\caption*{$d=7$}
\end{subtable}
\begin{subtable}[t]{0.36\textwidth}\centering
$\begin{psmallmatrix}
  8   \\
 15 & 30 \\
 21 & 43 & 60 \\
26 & 54 & 72 & 80 \\
30 & 63 & 79 & 84 & 88 \\
33 & 70 & 82 & 87 & 91 & 94 \\
35 & 75 & 84 & 89 & 93 & 96 & 98  \\
36 & 78 & 85 & 90 & 94 & 97 & 99 & 100  \\
36 & 79 & 85 & 90 & 94 & 97 & 99 & 100 & 100 \end{psmallmatrix}$
\caption*{$d=8$}
\end{subtable}

\caption{The dimension matrices $\left(\dim\mathcal{P}_{d,2,(m,n)}\right)_{m,n}$ for varying $d$.}\label{fig:dimsMdmn}
\end{table}

The pattern observed in \Cref{fig:dimsMdmn} admits a uniform closed form which we record below. 

\begin{conjecture}\label{conj:closedFormDimL}
For all $d,m,n\in\mathbb{N}$,
\begin{equation}\label{eq:closedFormDimL}
\dim\mathcal{L}_{d,2,(m,n)} = d\,mn - s_d(m) - s_d(n) - s_d\bigl((m-1)(n-1)\bigr),
\end{equation}
where
\begin{equation*}
s_d(m):=\begin{cases}\binom{m}{2} & \text{if $m\le d$,}\\[2pt] dm-\binom{d+1}{2} & \text{if $m\ge d$.}\end{cases}
\end{equation*}
\end{conjecture}

At $(m,n)$ large, so that $m,n\ge d$ and $(m-1)(n-1)\ge d$, we have $s_d(m)=dm-\binom{d+1}{2}$, $s_d(n)=dn-\binom{d+1}{2}$, and $s_d\bigl((m-1)(n-1)\bigr)=d(m-1)(n-1)-\binom{d+1}{2}$, so \eqref{eq:closedFormDimL} collapses to $d(3d+1)/2$.

\begin{remark}
    Using \cite{RS26}, we provide computational evidence for \Cref{conj:closedFormDimL} until $d\le 12$ and $1\le m,n\le d+1$. 
\end{remark} 

\section{Witt-type formula for two-parameter shuffle algebras}\label{sec:Witt}

In this section, we determine the dimension of the $k$th universal variety, by constructing a novel Witt-type formula (\Cref{thm:witt-two-parameter}). With this we count the number of free generators in the two-parameter shuffle algebra.

Let $H_\ell$ denote the $\ell$th graded component, with respect to weight, of the two-parameter
shuffle algebra $H=\bigoplus_{\ell\geq0}H_{\ell}$. 
Counting permutations gives the vector space dimension  $\dim_\R H_\ell=\ell!\,d^{\,\ell}$. Let $g_\ell$ denote the
number of free generators of $H$ of weight $\ell$ according to \Cref{cor:free}. Since $H$ is free, it is isomorphic to the symmetric algebra $\mathrm{Sym}(V)$ on its space of free
generators $V=\bigoplus_{\ell\geq1}V_\ell$, the graded polynomial ring generated by $V$, e.g., see \cite[Chapter A2]{eisenbud1995commutative}. Note that  $\dim_\R V_\ell=g_\ell<\infty$.

We now derive the Hilbert--Poincar\'e series of $H$ from this description. Since $\mathrm{Sym}$ carries direct sums to
graded tensor products, we decompose
\begin{equation*}
H\;\cong\;\bigotimes_{\ell\geq1}\mathrm{Sym}(V_\ell),
\end{equation*}
so it suffices to determine the Hilbert--Poincar\'e series of each factor separately and then multiply them. The factor $\mathrm{Sym}(V_\ell)$ is a polynomial ring in $g_\ell$ variables, each of degree $\ell$. Its degree-$n$ part is spanned by monomials of total degree $n$ in those $g_\ell$ variables, which is nonempty only when $\ell$ divides $n$. For $n=\ell j$ the number of such monomials is the number of degree-$j$ monomials in $g_\ell$ variables, namely $\binom{j+g_\ell-1}{g_\ell-1}$. Summing over $j\geq0$ and applying the negative binomial expansion gives the Hilbert--Poincar\'e series
\begin{equation*}
\sum_{j\geq0}\binom{j+g_\ell-1}{g_\ell-1}q^{\ell j}\;=\;(1-q^\ell)^{-g_\ell}
\end{equation*}
as an identity of formal power series in $\mathbb{Q}[\![q]\!]$.

Multiplying these factors over $\ell$ (whose one-parameter precedent for shuffle algebras
is Radford's theorem \cite[Theorem 3.1.1(e)]{radford1979natural}, see also
\cite[Theorem 6.3.4]{grinbergreiner2020hopf}), 
yields the identity
\begin{equation}\label{eq:hilbert-series}
  \sum_{\ell\geq0}\dim_\R (H_\ell)\,q^{\ell}
  \;=\;\prod_{\ell\geq1}(1-q^{\ell})^{-g_\ell}
\end{equation}
with $\dim_\R H_\ell=\ell!\,d^{\,\ell}$. Our goal is to solve \eqref{eq:hilbert-series} in closed form for all $\ell$
simultaneously, following the argument Witt used in the one-parameter case, e.g., compare 
\cite[Theorem~6.1]{reutenauer1993free}. For this, we first define a sequence that will
be of use in this endeavor.

\begin{definition}\label{def:B-sequence}
For $\ell\geq1$ let $B_\ell$ be the sequence recursively defined by $B_1:=1$ and
\begin{equation}\label{eq:B-recursion}
  B_\ell\;:=\;\ell\cdot\ell!\;-\;\sum_{k=1}^{\ell-1}k!\,B_{\ell-k}
\end{equation}
for $\ell\geq2$. Its first values are $1,\,3,\,13,\,71,\,461,\,3447,\,29093$ and $273343$.
The sequence $B_\ell$ counts the indecomposable (or connected) permutations of
$\{1,\dots,\ell+1\}$, those not fixing $\{1,\dots,j\}$ setwise for any
$0<j\leq\ell$, see \cite{lentin1972equations,oeisA003319}. 
\end{definition}

With $B_\ell$ in hand, we can state our main result of this section, the Witt-type formula for the two-parameter shuffle algebra $H$. 

\begin{theorem}\label{thm:witt-two-parameter}
The number of free weight-$\ell$ generators in the two-parameter shuffle algebra $H$ 
 is given by 
\begin{equation}\label{eq:witt-two-parameter}
  g_\ell=
        \frac{1}{\ell}\sum_{a\mid\ell}\mu\left(\frac{\ell}{a}\right)\,B_a\,d^{\,a}
\end{equation}
where $\mu$ denotes the M\"obius function and $B_a$ is as in
\Cref{def:B-sequence}.
\end{theorem}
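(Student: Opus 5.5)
The plan is to follow Witt's classical argument and take logarithms in \eqref{eq:hilbert-series}. Set $F(q):=\sum_{\ell\geq0}\ell!\,d^\ell q^\ell$; this is a formal power series with constant term $1$, so $\log F$ is well defined in $\mathbb{Q}[\![q]\!]$. Taking the logarithm of the right-hand side of \eqref{eq:hilbert-series} and expanding each factor gives
\begin{equation*}
\log F(q)=\sum_{\ell\geq1}g_\ell\sum_{j\geq1}\frac{q^{\ell j}}{j}.
\end{equation*}
Applying $q\frac{\mathrm d}{\mathrm dq}$ then yields
\begin{equation*}
q\frac{F'(q)}{F(q)}=\sum_{n\geq1}\Bigl(\sum_{\ell\mid n}\ell\,g_\ell\Bigr)q^n.
\end{equation*}
So the whole proof reduces to identifying the coefficients $c_n$ of the logarithmic derivative $qF'/F$ as $c_n=B_n d^n$. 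Once that is done, Möbius inversion of $\sum_{\ell\mid n}\ell g_\ell=B_nd^n$ gives exactly \eqref{eq:witt-two-parameter}.

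The key step is the identity $qF'(q)=F(q)\cdot\sum_{n\geq1}c_nq^n$. Comparing coefficients of $q^\ell$, and using $[q^\ell]\,qF'=\ell\cdot\ell!\,d^\ell$ and $[q^k]F=k!\,d^k$, it reads
\begin{equation*}
\ell\cdot\ell!\,d^\ell=\sum_{k=0}^{\ell-1}k!\,d^k\,c_{\ell-k},\qquad\text{i.e.}\qquad c_\ell=\ell\cdot\ell!\,d^\ell-\sum_{k=1}^{\ell-1}k!\,d^k\,c_{\ell-k}.
\end{equation*}
This recursion determines $c_\ell$ uniquely, with $c_1=d$. I will verify by induction on $\ell$ that $c_\ell=B_\ell d^\ell$. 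Substituting $c_{\ell-k}=B_{\ell-k}d^{\ell-k}$, every term becomes homogeneous of degree $d^\ell$. After factoring out $d^\ell$, the recursion is precisely \eqref{eq:B-recursion}. The dependence on $d$ therefore separates cleanly, which is the analogue of how $d^\ell$ arises in the classical Witt formula.

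The remaining steps are routine. Möbius inversion of $\sum_{a\mid n}a\,g_a=B_nd^n$ gives $n\,g_n=\sum_{a\mid n}\mu(n/a)B_ad^a$, and dividing by $n$ gives the claim.

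The only point needing care is the legitimacy of \eqref{eq:hilbert-series} itself. This requires each $g_\ell$ to be finite, so that the infinite product converges $q$-adically; that holds because $\dim H_\ell<\infty$. The logarithm manipulations are then valid in $\mathbb{Q}[\![q]\!]$, and I do not expect a genuine obstacle beyond bookkeeping of the convolution indices. The combinatorial interpretation of $B_\ell$ via indecomposable permutations is not needed for the proof, only the recursion.
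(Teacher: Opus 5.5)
Your proof is correct and follows the paper's argument: take logarithms of \eqref{eq:hilbert-series}, identify the coefficients of the logarithmic derivative as $B_\ell d^\ell$, and finish with M\"obius inversion. The only difference is that you get the recursion \eqref{eq:B-recursion} directly by comparing coefficients in $qF'=F\cdot(qF'/F)$ and track the power of $d$ by induction. The paper instead first proves the auxiliary identity $x^2 f'(x)=(1-x)f(x)-1$ of \Cref{lem:factorial-ode} and then substitutes $x=dq$, so your route is slightly shorter.
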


\begin{example}\label{ex:gens-k3-witt}
With $B_1=1$, $B_2=3$, $B_3=13$ and $\mu(1)=1$,
$\mu(2)=\mu(3)=-1$, the theorem  gives
\begin{equation*}
  g_1=d,\qquad
  g_2=\frac{d(3d-1)}{2},\qquad
  g_3=\frac{d(13d^{2}-1)}{3}.
\end{equation*}
In particular, summing the first two terms we recover $\dim\mathcal{U}_{d,2}=g_1+g_2=d(3d+1)/2$ from \Cref{thm:dimensionUniversalVariety}.
\end{example}

Before proving \Cref{thm:witt-two-parameter}, we establish an auxiliary
lemma. 

\begin{lemma}\label{lem:factorial-ode}
Let $f(x):=\sum_{n\geq0}n!\,x^{n}\in 1+x\,\mathbb{Q}[\![x]\!]$. Then, with $B_\ell$ from \Cref{def:B-sequence}, 
\begin{enumerate}[(i)]
\item\label{lem:factorial-ode1} $x^{2}\frac{\partial}{\partial x}f(x)=(1-x)\,f(x)-1$,
\item\label{lem:factorial-ode2} $\displaystyle x\frac{\partial}{\partial x}\log f(x)=\sum_{\ell\geq 1}B_\ell\,x^\ell$.
\end{enumerate}
\end{lemma}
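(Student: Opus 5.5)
Both parts are identities of formal power series, so the plan is to compare coefficients throughout.

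For part \eqref{lem:factorial-ode1}, I expand both sides of the claimed identity. The left side is $x^2\sum_{n\ge1} n\,n!\,x^{n-1}=\sum_{n\ge1} n\,n!\,x^{n+1}$. The right side is $\sum_{n\ge0}n!\,x^n-\sum_{n\ge0}n!\,x^{n+1}-1$. At $x^0$ both sides vanish. For $N\ge1$, the coefficient of $x^N$ on the right is $N!-(N-1)!=(N-1)\,(N-1)!$. On the left it is $(N-1)(N-1)!$, read off at $n=N-1$ (this is $0$ when $N=1$). So \eqref{lem:factorial-ode1} reduces to the elementary identity $N!=N\,(N-1)!$, and no real work is needed.

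For part \eqref{lem:factorial-ode2}, I do not use \eqref{lem:factorial-ode1} directly. Instead I work with the defining recursion \eqref{eq:B-recursion}. Put $\beta(x):=\sum_{\ell\ge1}B_\ell x^\ell$. Since $f\in 1+x\mathbb{Q}[\![x]\!]$ is invertible, $\log f$ is well defined, and the claim is equivalent to $x f'(x)=\beta(x)f(x)$. The left side is $\sum_{\ell\ge1}\ell\,\ell!\,x^\ell$. The coefficient of $x^\ell$ on the right is $\sum_{k=0}^{\ell-1}k!\,B_{\ell-k}=B_\ell+\sum_{k=1}^{\ell-1}k!\,B_{\ell-k}$. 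Equating coefficients gives exactly $B_\ell=\ell\cdot\ell!-\sum_{k=1}^{\ell-1}k!B_{\ell-k}$, including the case $\ell=1$, where it gives $B_1=1$.

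Both sides satisfy the same recursion with the same initial value, so by induction on $\ell$ the coefficients agree. Concretely, $\beta$ is the unique series solving $xf'=\beta f$, because $f$ is a unit. Hence $x\frac{\partial}{\partial x}\log f=xf'/f=\beta$.

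The only delicate point is bookkeeping. The $k=0$ term of the Cauchy product must be identified with $B_\ell$ itself, using $0!=1$, and the index range in \eqref{eq:B-recursion} must match $1\le k\le \ell-1$. I would double-check this against the listed values $B_2=3$ and $B_3=13$: $2\cdot2-1=3$ and $3\cdot 6-1\cdot3-2\cdot1=13$.

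Optionally, combining \eqref{lem:factorial-ode1} with \eqref{lem:factorial-ode2} gives the closed form $\beta=x\frac{(1-x)f-1}{x^2f}=\frac{1}{x}\bigl(1-x-f^{-1}\bigr)$. This links $B_\ell$ to the coefficients of $1/f$, i.e.\ the indecomposable permutations, consistent with \Cref{def:B-sequence}, but it is not needed for the lemma.
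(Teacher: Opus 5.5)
Your proof is correct. Part (i) is the same coefficient comparison as in the paper, but in part (ii) you take a more direct route.

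You compare coefficients in $xf'=\beta f$, where $[x^\ell]\,xf'=\ell\cdot\ell!$ is immediate. The paper instead first divides (i) by $f$ to obtain $x\frac{\partial}{\partial x}\log f=\frac{1}{x}\bigl(1-x-f^{-1}\bigr)$, rewrites this as $f\bigl(1-x-x\beta\bigr)=1$, and expands that Cauchy product. Since $f(1-x-x\beta)-1=x^{2}f'-x\beta f=x\,(xf'-\beta f)$ by (i), both computations end in the same coefficient equation $\ell\cdot\ell!=\sum_{j=1}^{\ell}B_j\,(\ell-j)!$.

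Your version buys two things. It shows that (ii) does not depend on (i) at all, and that \eqref{eq:B-recursion} is simply the coefficient form of $xf'=\beta f$. No separate induction is needed: the identity holds for each $\ell$ by the definition of $B_\ell$, and dividing by the unit $f$ finishes.

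The paper's detour buys the formula $\beta=x^{-1}\bigl(1-x-f^{-1}\bigr)$. This ties $B_\ell$ to the coefficients of $1/f$, and hence to the indecomposable permutations of Definition~\ref{def:B-sequence}. You recover exactly this link in your optional remark.
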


\begin{proof}
For part \eqref{lem:factorial-ode1},  we compare coefficients of $x^{\ell}$ on both sides. With the formal differential, $\frac{\partial}{\partial x}f(x)=\sum_{n\geq 1}n\cdot n!\,x^{n-1}$, the substitution $\ell=n+1$ gives 
\begin{equation*}
    x^{2}\dfrac{\partial }{\partial x}f(x)\;=\;\sum_{\ell\geq2}(\ell-1)\,(\ell-1)!\,x^{\ell},
\end{equation*}
with no contribution in degrees zero and one. On the right,
\begin{equation*}
    (1-x)f(x)-1=\sum_{\ell\geq1}\ell!\,x^{\ell}
    -\sum_{\ell\geq1}(\ell-1)!\,x^{\ell}
  =\sum_{\ell\geq1}\bigl(\ell!-(\ell-1)!\bigr)x^{\ell},
\end{equation*}
the constant terms of $f(x)$ and of the subtracted $1$ cancel each other out directly. Since $\ell!-(\ell-1)!=(\ell-1)(\ell-1)!$ vanishes at $\ell=1$, both series equal $(\ell-1)(\ell-1)!\,x^{\ell}$ for $\ell\geq2$ and $0$ otherwise. 

For part \eqref{lem:factorial-ode2}, $f(x)$ lies in $1+x\,\mathbb{Q}[\![x]\!]$ so its logarithm is defined. Let $b_\ell:=\ell\,[x^{\ell}]\log f(x)$, where $[x^\ell]$ denotes the dual of $x^\ell$, so that
\begin{equation*}
    x\frac{\partial}{\partial x}\log f(x)\;=\;\sum_{\ell\geq 1}b_\ell\,x^\ell.
\end{equation*}
We show $b_\ell=B_\ell$ for all $\ell\geq 1$. Since $f(x)\in 1+x\,\mathbb{Q}[\![x]\!]$ is invertible in $\mathbb{Q}[\![x]\!]$, dividing part \eqref{lem:factorial-ode1} by $f(x)$ gives
\begin{equation*}
  x\dfrac{\partial}{\partial x}\log f(x)\;=\;\frac{x\frac{\partial}{\partial x} f(x)}{f(x)}
        \;=\;\frac{1}{x}\Bigl(1-x-\frac{1}{f(x)}\Bigr),
\end{equation*}
equivalently
\begin{equation}\label{eq:phi-functional}
  f(x)\Bigl(1-x-\sum_{\ell\geq 1}b_\ell\,x^{\ell+1}\Bigr)\;=\;1 .
\end{equation}
Expanding the left-hand side of \eqref{eq:phi-functional} as a Cauchy product,
\begin{equation*}
  f(x)-xf(x)-\Bigl(\sum_{\ell\geq 1}b_\ell\,x^{\ell+1}\Bigr)f(x)
  \;=\;\sum_{n\geq0}n!\,x^{n}-\sum_{n\geq1}(n-1)!\,x^{n}
     -\sum_{n\geq2}\Bigl(\sum_{j=1}^{n-1}b_j\,(n-1-j)!\Bigr)x^{n}.
\end{equation*}
Degrees zero and one are automatic, for $n\geq 2$ the coefficient of $x^n$ must vanish, giving
\begin{equation*}
     n!-(n-1)!-\sum_{j=1}^{n-1}b_j\,(n-1-j)!\;=\;0 .
\end{equation*}
Set $\ell:=n-1 \geq 1$ and isolate the summand $j=\ell$. Using $(\ell+1)!-\ell!=\ell\cdot\ell!$ and the change of index $k=\ell-j$,
\begin{equation*}
  b_\ell\;=\;\ell\cdot\ell!-\sum_{k=1}^{\ell-1}k!\,b_{\ell-k}.
\end{equation*}
For $\ell=1$ the sum is empty and gives $b_1=1=B_1$, for $\ell\geq2$ this is precisely \eqref{eq:B-recursion}. Since the recursion determines its solution uniquely, $b_\ell=B_\ell$ for all $\ell\geq1$. 
\end{proof}

\begin{proof}[Proof of \Cref{thm:witt-two-parameter}]
Both sides of \eqref{eq:hilbert-series} lie in $1+q\,\mathbb{Q}[\![q]\!]$ and can be compared coefficient by coefficient, where $[q^\ell]$ denotes dual of $q^\ell$. We compute $[q^\ell]\log$ of each side.

For the right-hand side, expanding $-\log(1-q^m)=\sum_{j \geq 1} q^{mj}/j$ and grouping the finitely many pairs $(m,j)$ with $mj=\ell$ yields 
\begin{equation}\label{eq:log-rhs}
  [q^{\ell}]\log\prod_{m\geq1}(1-q^{m})^{-g_m}
  \;=\;\sum_{m\geq1}g_m\sum_{j\geq1}\frac{[q^\ell]q^{mj}}{j}
  \;=\;\frac{1}{\ell}\sum_{m\mid\ell}m\,g_m .
\end{equation}
For the left-hand side, $\dim_\R H_\ell=\ell!\,d^{\,\ell}$ gives $\sum_{\ell\geq0}\dim_\R (H_\ell)\,q^{\ell}=f(dq)$ with $f$ as in \Cref{lem:factorial-ode}. Changing variables $x=dq$, the chain rule and part \eqref{lem:factorial-ode2} of \Cref{lem:factorial-ode} give
\begin{equation*}
    q\frac{\partial}{\partial q}\log f(dq)\;=\;x\frac{\partial}{\partial x}\log f(x)\Big|_{x=dq}\;=\;\sum_{\ell\geq 1}B_\ell\,d^{\,\ell}\,q^{\ell},
\end{equation*}
so $\ell\,[q^\ell]\log f(dq)=B_\ell\,d^{\,\ell}$. Equating $[q^\ell]\log$ of both sides of \eqref{eq:hilbert-series} and multiplying by $\ell$ yields for all $\ell\geq1$, 
\begin{equation}\label{eq:witt-type-identity}
  \sum_{m\mid\ell}m\,g_m\;=\;B_\ell\,d^{\,\ell}.
\end{equation}
Setting $F(\ell):=B_\ell\,d^{\,\ell}$ and $G(m):=m\,g_m$, \eqref{eq:witt-type-identity} reads $F(\ell)=\sum_{m\mid\ell}G(m)$ for all $\ell\geq1$. By M\"obius inversion, e.g., see \cite[\S3.7]{stanley2011enumerative}, we obtain $G(\ell)=\sum_{a\mid\ell}\mu(\ell/a)F(a)$, and dividing by $\ell$ yields \eqref{eq:witt-two-parameter}.
\end{proof}

The Witt-type formula \eqref{eq:witt-two-parameter} counts the free generators
of the two-parameter shuffle algebra $H$ in each weight. Summing over
weights $1$ to $k$ yields the dimension of the universal
variety.

\begin{theorem}\label{thm:dimUdk}
For all $d,k\geq1$,
\begin{equation}\label{eq:dim-Udk-bound}
  \dim\mathcal{U}_{d,k}=\sum_{\ell=1}^{k}\sum_{a\mid\ell}\mu\left(\frac{\ell}{a}\right)\frac{B_a\,d^{a}}{\ell}. 
\end{equation}
\end{theorem}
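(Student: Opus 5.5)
The plan is to compute $\dim\mathcal{U}_{d,k}$ as $\dim\mathcal{G}_{d,k}$, and to compute the latter as the number of free generators of $H$ of weight at most $k$. Then \Cref{thm:witt-two-parameter} supplies the closed formula.

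\emph{Step 1: $\dim\mathcal{U}_{d,k}=\dim\mathcal{G}_{d,k}$.} By \Cref{prop:proojectionResult}, the projection $\operatorname{pr}\colon\mathcal{G}_{d,k}\to\mathcal{U}_{d,k}$ is dominant and generically finite, namely $k$-to-one over the dense open set $\mathcal{U}^{\circ}_{d,k}$. Since $\mathcal{G}_{d,k}$ is irreducible by \Cref{prop:free-group-variety}, the fiber dimension theorem gives the equality of dimensions.

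\emph{Step 2: $\dim\mathcal{G}_{d,k}=\sum_{\ell\le k}g_\ell$.} By the proof of \Cref{prop:free-group-variety}, the coordinate ring $\C[\sigma^{(\le k)}]/G_{d,k}$ is a polynomial ring on the free generators of weight at most $k$. I would make this precise as follows. Choose homogeneous free generators of $H\otimes\C$ via \Cref{cor:free}. Every basis element $(w,\tau)$ of weight $\ell\le k$ is a shuffle polynomial in generators of weight $\le\ell$. The generators of $G_{d,k}$ let one rewrite each variable $\sigma_{(w,\tau)}$ modulo $G_{d,k}$ as the corresponding polynomial in the variables attached to generators. Conversely, the map sending those variables to the free generators identifies the quotient with the subalgebra of $H$ generated in weight $\le k$, and this subalgebra is free on $\sum_{\ell\le k}g_\ell$ generators. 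Hence $\C[\sigma^{(\le k)}]/G_{d,k}$ is a polynomial ring in $\sum_{\ell=1}^k g_\ell$ variables, so $\mathcal{G}_{d,k}$ is an affine space of that dimension.

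\emph{Step 3.} Substitute \eqref{eq:witt-two-parameter} for each $g_\ell$ and sum over $\ell=1,\dots,k$. This yields \eqref{eq:dim-Udk-bound}. As a sanity check, the cases $k=1,2$ agree with $g_1=d$ and with \Cref{thm:dimensionUniversalVariety}, as noted in \Cref{ex:gens-k3-witt}.

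The main obstacle is Step 2. One must verify that truncating the relations at total weight $\le k$ produces no extra identifications and misses none. Concretely, the ideal $G_{d,k}$ must equal the kernel of the surjection $\C[\sigma^{(\le k)}]\to H_{\le k}$-generated subalgebra, where this surjection sends $\sigma_{(w,\tau)}$ to $(w,\tau)$ and products to shuffles. Every product of weight $>k$ is unconstrained in the quotient, since such products are not coordinates. One needs this to be consistent with the kernel, which can be handled by a filtration argument on weight: the associated graded of $G_{d,k}$ is generated by the weight-homogeneous pieces. I would first try to prove $\dim\mathcal{G}_{d,k}\ge\sum g_\ell$ by exhibiting an injective morphism from $\C^{\sum g_\ell}$, and the upper bound by the rewriting argument above.
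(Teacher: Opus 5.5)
Your proposal is correct and follows the paper's proof. It equates $\dim\mathcal{U}_{d,k}$ with $\dim\mathcal{G}_{d,k}$ via the generically finite projection of \Cref{prop:proojectionResult}, identifies $\mathcal{G}_{d,k}$ with an affine space of dimension $\sum_{\ell\le k}g_\ell$ using freeness of $H$, and substitutes \eqref{eq:witt-two-parameter}. Your Step 2 is more explicit than the paper's one-line identification of the quotient with $\mathrm{Sym}(V_{\le k})$, and it already settles the worry in your last paragraph, so no filtration argument is needed: your rewriting shows that the polynomial ring on variables attached to the free generators of weight at most $k$ surjects onto $\C[\sigma^{(\le k)}]/G_{d,k}$, its composite with the induced map to $H\otimes\C$ is injective because the generators are algebraically independent, and hence this surjection is an isomorphism and $G_{d,k}$ is exactly the kernel.
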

\begin{proof}
  By \Cref{cor:free}, $H\cong \mathrm{Sym}(V)$ with $V=\bigoplus_{\ell \geq 1} V_{\ell}$. The ideal $G_{d,k}$ collects exactly the shuffle relations among signature coordinates of weight $k$ or lower, so the quotient $\R[\sigma^{(\leq k)}]/G_{d,k}$ is the $k$th truncation of $H$, isomorphic to $\mathrm{Sym}(V_{\leq k})$ with $V_{\leq k}:=\bigoplus_{\ell=1}^k V_\ell$.
  Since the symmetrization of a vector space is a polynomial ring on that space, $\mathcal{G}_{d,k}$ is an affine space with 
$\dim V_{\leq k}=\sum_{\ell=1}^k g_\ell$.  By \Cref{prop:proojectionResult},  $\mathcal{G}_{d,k}\to\mathcal{U}_{d,k}$ is generically $k$-to-one,
so generically finite onto its image, so
$\dim\mathcal{U}_{d,k}=\dim\mathcal{G}_{d,k}=\sum_{\ell=1}^k g_\ell$.
Substituting \eqref{eq:witt-two-parameter} gives the stated formula.
\end{proof}

\begin{table}[ht]
\centering
\begin{subtable}[t]{0.45\textwidth}\centering
$\begin{psmallmatrix} 2 \\ 4 & 8 \\ 5 & 12 & 18 \\ 5 & 16 & 24 & 32 \\ 5 & 20 & 30 & 40 & 41 \\ 5 & 24 & 36 & 41 & 41 & 41 \end{psmallmatrix}$
\caption*{$d=2$}
\end{subtable}\hfill
\begin{subtable}[t]{0.45\textwidth}\centering
$\begin{psmallmatrix} 3 \\ 6 & 12 \\ 9 & 18 & 27 \\ 12 & 24 & 36 & 48 \\ 14 & 30 & 45 & 60 & 75 \\ 14 & 36 & 54 & 72 & 90 & 108 \\ 14 & 42 & 63 & 84 & 105 & 126 & 131 \\ 14 & 48 & 72 & 96 & 120 & 131 & 131 & 131 \end{psmallmatrix}$
\caption*{$d=3$}
\end{subtable}
 
\medskip

\begin{subtable}[t]{0.45\textwidth}\centering
$\begin{psmallmatrix} 4 \\ 8 & 16 \\ 12 & 24 & 36 \\ 16 & 32 & 48 & 64 \\ 20 & 40 & 60 & 80 & 100 \\ 24 & 48 & 72 & 96 & 120 & 144 \\ 28 & 56 & 84 & 112 & 140 & 168 & 196 \\ 30 & 64 & 96 & 128 & 160 & 192 & 224 & 256 \\ 30 & 72 & 108 & 144 & 180 & 216 & 252 & 288 & 302 \end{psmallmatrix}$
\caption*{$d=4$}
\end{subtable}
\hfill
\begin{subtable}[t]{0.45\textwidth}\centering
$\begin{psmallmatrix} 
5 \\
10 & 20  \\
15 & 30 & 45  \\
20 & 40 & 60 & 80  \\
25 & 50 & 75 & 100 & 125  \\
30 & 60 & 90 & 120 & 150 & 180   \\
35 & 70 & 105 & 140 & 175 & 210 & 245  \\
40 & 80 & 120 & 160 & 200 & 240 & 280 & 320   \\
45 & 90 & 135 & 180 & 225 & 270 & 315 & 360 & 405  \\
50 & 100 & 150 & 200 & 250 & 300 & 350 & 400 & 450 & 500 \\
55 & 110 & 165 & 220 & 275 & 330  & 385 & 440 & 495 & 550 & 580\\
\end{psmallmatrix}$
\caption*{$d=5$}
\end{subtable}

\caption{The dimension matrices  $\left(\dim\mathcal{P}_{d,3,(m,n)}\right)_{m,n}$ for varying $d$.}\label{fig:dimsMdmn-k3}
\end{table}

\begin{example}\label{ex:witt-vs-lyndon}
By \cite[Theorem 3.12]{schmitz2024freegeneratorshoffmansisomorphism}, $H$ is
freely generated via a family of Lyndon
words over the graded alphabet $C_\mathsf{irr}$, so $g_\ell$ counts Lyndon
words of total weight $\ell$ over this alphabet.
For $d=2$ and $k=3$, \Cref{thm:witt-two-parameter} gives $g_1=2$, $g_2=5$ and
$g_3=34$, so $H$ has $41$ free generators of weight at most three. 
The corresponding Lyndon words for $H_1$ are $\sigma_{(\w{1},1)}$ and $\sigma_{(\w{2},1)}$. For $H_2$ we obtain five Lyndon words of weight two, that is $\sigma_{(\w{1},1)}\sigma_{(\w{2},1)}$ of length two, and four singletons $\sigma_{(\w{ij},2\,1)}$ with arbitrary letters $\mathbf{i},\mathbf{j}\in\{\w{1},\w{2}\}$. For $H_3$ we get $\sigma_{(\w{1},1)}\sigma_{(\w{1},1)}\sigma_{(\w{2},1)}$ and $\sigma_{(\w{1},1)}\sigma_{(\w{2},1)}\sigma_{(\w{2},1)}$ of length three, $8$ words $\sigma_{(\w{ij},2\,1)}\sigma_{(\w{m},1)}$ of length two, and $24$ singletons $\sigma_{(\w{ijm},\tau)}$ with $\tau\in\{(2,3,1),(3,1,2),(3,2,1)\}$ and $\w{i},\w{j},\w{m}\in\{\w{1},\w{2}\}$. 
\end{example}

\begin{example}\label{ex:dim_P2mn_d2to5}
We use \cite{RS26} and compute the
polynomial membrane varieties $\mathcal{P}_{d,3,(m,n)}$ for $d\in\{2,3,4,5\}$ and truncation level three, see 
\Cref{fig:dimsMdmn-k3} for its dimensions. In each case, the entries of the resulting dimension matrices grow with
$(m,n)$ until they plateau at dimension of the universal variety $\dim\mathcal{U}_{d,3}=d/6 + 3d^2/2 + 13d^3/3$ from \Cref{thm:dimUdk},
namely $41$, $131$, $302$, and $580$, respectively. In particular, this matches \Cref{ex:witt-vs-lyndon}.
\end{example}

In all our computations from \Cref{ex:dim_P2mn_d2to5}, the tables $\dim(\mathcal{P}_{d,k,(m,n)})
$ with varying $(m,n)$ plateau at exactly $\dim\mathcal{U}_{d,k}$ for  $(m,n)$
large enough. 
This provides computational evidence for the following conjecture on polynomial and piecewise bilinear membranes towards universal varieties.

\begin{conjecture}\label{conj:stabilizingGridUniversalVarity}
The grid \eqref{eq:grid_of_varieties} stabilizes in the $k$-level universal variety, i.e., 
\begin{equation*} 
  \mathcal{U}_{d,k}\;=\;\bigcup_{M,N\to\infty}\mathcal{P}_{d,k,(M,N)}\;=\;\bigcup_{M,N\to\infty}\mathcal{L}_{d,k,(M,N)}
\end{equation*}
and the limit is reached at $M=N=\left\lceil\sqrt{\frac{\dim\mathcal{U}_{d,k}}d}\;\right\rceil$ for all $k\geq 3$. 
\end{conjecture} 

\begin{example}\label{ex:weak-chen-does-not-stabilize}
Take $d=2$ and $k=3$. For $a\in\R^{2}$, let $X(a):[0,1]^{2}\to\R^{2}$ denote a bilinear membrane with $\partial_{12}X(a)=a$, and for each $\ell\geq1$ let $\mathcal{W}_{2,3,\ell}\subseteq\mathcal{U}_{2,3}$ denote the Zariski closure of the image of the parametrization by $\R^{2\times \ell}$ via
$(a_{1},\ldots,a_{\ell}) \mapsto\ \sigma\bigl(X(a_{1})\sqcup\cdots\sqcup X(a_{\ell})\bigr)$.
By our notion of Chen's identity (\Cref{thm:Chen}) iterated $\ell-1$ times, this parametrization is polynomial in the entries of $(a_{1},\ldots,a_{\ell})$. Using  \cite{RS26} we verify computationally that $\dim\mathcal{W}_{2,3,\ell}=2\ell$ for $1\leq\ell\leq 9$, 
after which it stabilizes at $w_\ell = 18$ from $\ell\geq9$ onwards. In particular, $\bigcup_{\ell\geq 1}\mathcal{W}_{2,3,\ell}$ has dimension $18$, strictly below $\dim\mathcal{U}_{2,3}=41$ from \Cref{thm:dimUdk} (see also \Cref{ex:witt-vs-lyndon}). Hence our $\sqcup$-decomposable membranes do not fill the universal variety $\mathcal{U}_{d,k}$, in contrast to the path case by \cite[Theorem 6.1]{bib:AFS2019}. This shows that a proof of \Cref{conj:stabilizingGridUniversalVarity} must go beyond our notion of Chen's identity for membranes alone.
\end{example}

Inspired by \Cref{cor:algChenChow-k2},  \Cref{conj:stabilizingGridUniversalVarity} and the Chen-Chow theorem for paths (\cite[Theorem 7.28]{Friz_Victoir_2010}), we conjecture the following holds for membranes. 

\begin{conjecture}[Chen-Chow for membranes]\label{con:algebraicChenChowAllTruncLevels}
  For every sequence  $s\in\bigoplus_{\ell=0}^k\bigoplus_{\nu\in\Sigma_\ell}(\R^{d})^{\otimes\ell}$ with entries satisfying the two-parameter shuffle identity (\Cref{thm:shuffle-identity}) there is a piecewise bilinear membrane such that its $k$-truncated signature coincides with $s$.
\end{conjecture}

\section*{Acknowledgements}
The authors acknowledge funding by the Deutsche
Forschungsgemeinschaft (DFG, German Research Foundation) – CRC/TRR 388
``Rough Analysis, Stochastic Dynamics and Related Fields'' – Project A04 and B01, 
516748464.

\printbibliography

\end{document}